\newcommand{\andf}{\quad\hbox{and}\quad}
\newcommand{\with}{\quad\hbox{with}\quad}
\def\Supp{\mathop{\rm Supp}\nolimits\ }
\newcommand{\newcom}{\newcommand}
\def\longformule#1#2{
\displaylines{ \qquad{#1} \hfill\cr \hfill {#2} \qquad\cr } }
\def\inte#1{
\displaystyle\mathop{#1\kern0pt}^\circ }
\newcom{\al}{\alpha}
\newcom{\be}{\beta}
\newcom{\s}{\sigma}
\newcom{\eps}{\epsilon}
\newcom{\ve}{\varepsilon}
\newcom{\ga}{\gamma}
\newcom{\Ga}{\Gamma}
\newcom{\ka}{\kappa}
\newcom{\Lam}{\Lambda}
\newcom{\lam}{\lambda}
\newcom{\vp}{\varphi}
\def\cR{{\mathcal R}}
\newcom{\Om}{\Omega}
\newcom{\om}{\omega}
\newcom{\Sig}{\Sigma}
\newcom{\sig}{\sigma}
\newcom{\tht}{\theta}
\newcom{\tri}{\triangle}
\newcom{\oo}{\infty}
\newcom{\vphi}{\varphi}
\newcom{\cB}{{\mathcal B}}
\newcom{\cC}{{\mathcal C}}
\newcom{\cD}{{\mathcal D}}
\newcom{\cF}{{\mathcal F}}
\newcom{\cL}{{\mathcal L}}
\newcom{\cM}{{\mathcal M}}
\newcom{\cP}{{\mathcal P}}
\newcom{\cS}{{\mathcal S}}
\newcom{\cQ}{{\mathcal Q}}
\newcom{\cT}{{\mathcal T}}
\newcom{\cY}{{\mathcal Y}}
\newcom{\cZ}{{\mathcal Z}}
\newcom{\R}{\Bbb R}
\newcom{\T}{\Bbb T}
\newcom{\N}{\Bbb N}
\newcom{\Z}{\Bbb Z}
\newcom{\C}{\Bbb C}
\newcom{\E}{\Bbb E}
\let\wh=\widehat
\def\dive{\mathop{\rm div}\nolimits}
\let\e=\varepsilon
\newcom{\f}{\frac}
\newcom{\dint}{\displaystyle\int}
\newcom{\dsum}{\displaystyle\sum}
\newcom{\dlim}{\displaystyle\lim}
\newcom{\ov}{\overline}
\newcom{\wt}{\widetilde}
\newcom{\pa}{\partial}
\newcom{\p}{\partial}
\newcom\na{\nabla}
\newcom{\D}{\Delta}
\newcom\rto{\rightarrow}
\newcom\lto{\leftarrow}
\newcom\mto{\mapsto}
\newcom{\disp}{\displaystyle}
\newcom{\non}{\nonumber}
\newcom{\no}{\noindent}
\newcom{\QED}{$\square$}
\def\ef{\hphantom{MM}\hfill\llap{$\square$}\goodbreak}
\def\eqdefa{\buildrel\hbox{\footnotesize def}\over =}
\newcommand{\beq}{\begin{equation}}
\newcommand{\eeq}{\end{equation}}
\newcommand{\ben}{\begin{eqnarray}}
\newcommand{\een}{\end{eqnarray}}
\newcommand{\beno}{\begin{eqnarray*}}
\newcommand{\eeno}{\end{eqnarray*}}
\newtheorem{thm}{Theorem}[section]
\newtheorem{lem}{Lemma}[section]
\newtheorem{rmk}{Remark}[section]
\newtheorem{col}{Corollary}[section]
\newtheorem{prop}{Proposition}[section]
\renewcommand{\theequation}{\thesection.\arabic{equation}}
\newtheorem{theorem}{Theorem}[section]
\newtheorem{definition}[theorem]{Definition}
\newtheorem{lemma}[theorem]{Lemma}
\newtheorem{Theorem}{Theorem}[section]
\newtheorem{Remark}[Theorem]{Remark}
\begin{document}
\title[Global wellposedness of 3-D inhomogeneous NS equations]
{\small Global well-posedness of 3-D inhomogeneous Navier-Stokes
equations with ill-prepared initial data}

\author[P. ZHANG]{Ping Zhang}%
\address[P. ZHANG]
 {Academy of
Mathematics $\&$ Systems Science and  Hua Loo-Keng Key Laboratory of
Mathematics, The Chinese Academy of Sciences\\
Beijing 100190, CHINA } \email{zp@amss.ac.cn}
\author[Z. ZHANG]{Zhifei Zhang}\address[Z. ZHANG]
{School of  Mathematical Science, Peking University, Beijing 100871,
P. R. CHINA} \email{zfzhang@math.pku.edu.cn}

\date{July 18, 2014}

\maketitle
\begin{abstract} In this paper, we investigate the global well-posedness  of 3-D
incompressible inhomogeneous  Navier-Stokes equations with
ill-prepared large initial data which are slowly varying in  one
space variable, that is, initial data of the form
$\bigl(1+\e^{\be}a_0(x_{\rm h},\e x_3),(\ve^{1-\al} v^{\rm h}_0,
\ve^{-\al}v_0^3)(x_{\rm h},\e x_3)\bigr)$ for  any $\al\in ]0,1/3[,$
 $\be>2\al,$ and $\ve$ being sufficiently small. We remark that initial data of this type do not
satisfy the smallness conditions in \cite{c-p-z,HPZ3} no matter how
small $\e$ is. In particular, this result greatly improves the
global well-posedness result in \cite{PZZ3} with the so-called
well-prepared initial data.
\end{abstract}

\noindent {\sl Keywords:} Inhomogeneous  Navier-Stokes equations,
Littlewood-Payley theory,
well-posedness, ill-prepared data\\

\noindent {\sl AMS Subject Classification (2000):} 35Q30, 76D05 \\

\renewcommand{\theequation}{\thesection.\arabic{equation}}
\setcounter{equation}{0}
\section{Introduction}

In this paper, we consider the global well-posedness of the
following incompressible inhomogeneous Navier-Stokes equations in
$\R^3$
\begin{equation}\label{eq:InhomoNS}
\left\{
\begin{array}{ll}
\p_t\rho+u\cdot\na\rho=0, \qquad (t,x)\in\R^+\times\R^3, \\
\rho(\p_tu+u\cdot\na u)-\Delta u+\na p=0, \\
\textrm{div} u=0,\\
(\rho,u)|_{t=0}=(\rho_{0},u_{0}),
\end{array}
\right.
\end{equation}
where $\rho, u=(u_1,u_2, u_3)$ stand for the density and  velocity
of the fluid respectively,  $p$ is a scalar pressure function.
  Such system describes a fluid which is obtained by
mixing two immiscible fluids that are incompressible and that have
different densities. It may also describe a fluid containing a
melted substance.

\smallskip When the initial density is away from zero, we denote by
$a\eqdefa\frac{1}{\rho}-1,$ and then \eqref{eq:InhomoNS} can be
equivalently reformulated as
\begin{equation}\label{1.3}
 \quad\left\{\begin{array}{l}
\displaystyle \p_t a + u \cdot \na a=0,
\hspace{1cm}(t,x)\in \R^+\times\R^3,\\
\displaystyle \p_t u + u \cdot \na u+ (1+a)(\na p-\Delta u)=0, \\
\displaystyle \dive\, u = 0, \\
\displaystyle (a, u)|_{t=0}=(a_0, u_{0}).
\end{array}\right.
\end{equation}
Notice that just as the classical Navier-Stokes system $(NS)$ (which
corresponds to the case when $a=0$ in \eqref{1.3}), the
inhomogeneous Navier-Stokes system (\ref{1.3}) also has a scaling.
Indeed if $(a, u)$ solves (\ref{1.3}) with initial data $(a_0,
u_0)$, then for $\forall \, \ell>0$,
\begin{equation}\label{1.2}
(a, u)_{\ell} \eqdefa (a(\ell^2\cdot, \ell\cdot), \ell u(\ell^2
\cdot, \ell\cdot))\quad\mbox{and}\quad (a_0,u_0)_\ell\eqdefa
(a_0(\ell\cdot),\ell u_0(\ell\cdot))
\end{equation}
 $(a, u)_{\ell}$ is also a solution of (\ref{1.3}) with initial data $(a_0,u_0)_\ell$.

\medbreak Lady\v zenskaja and Solonnikov \cite{LS} first established
the unique resolvability of (\ref{1.3}) in bounded domain $\Om$ with
homogeneous Dirichlet boundary condition for $u.$ Similar results
were obtained by Danchin \cite{danchin2} in $\R^d$ with initial data
in the almost critical (corresponding to the scaling in \eqref{1.2})
Sobolev spaces. In \cite{danchin}, Danchin studied in general space
dimension $d$ the unique solvability of the system (\ref{1.3}) with
initial data being small in the scaling invariant (or critical)
homogeneous Besov spaces. This result was extended to more general
Besov spaces  by Abidi in \cite{abidi}, and by Abidi, Paicu in
\cite{AP}. The smallness assumption on the initial density was
removed in \cite{AGZ2,AGZ3}.

\smallbreak
 Very recently,  Danchin and Mucha \cite{DM} noticed that
it was possible to establish existence \emph{and} uniqueness of a
solution to \eqref{eq:InhomoNS} in the case of a small discontinuity
for the initial density and  in a critical functional framework.
More precisely, the global existence and uniqueness was established
for any data $(\rho_0,u_0)$ which satisfies
\begin{equation}\label{eq:small}
\|\rho_0-1\|_{\cM( B^{-1+\frac dp}_{p,1}(\R^d))}+\|u_0\|_{
B^{-1+\frac dp}_{p,1}(\R^d)}\leq c.
\end{equation}  for some
$p\in[1,2d[$ and small enough constant $c,$  and where $\cM(
B^{-1+\frac dp}_{p,1}(\R^d))$ denotes the multiplier space of the
Besov space $B^{-1+\frac dp}_{p,1}(\R^d).$ One may check \cite{DM}
for details. Let us remark that the classical Navier-Stokes system
$(NS)$ has a unique global solution provided that the initial data
satisfy $\|u_0\|_{ B^{-1+\frac dp}_{p,\infty}(\R^d)}\leq c$ for any
$p\in ]1,\infty[$ (see \cite{cannonemeyerplanchon}). The restriction
of $p\in[1,2d[$ in \cite{DM} and the relevant references is due to
the appearance of the free transport equation in \eqref{1.3} and
thus need to deal with the product of $a$ with $\na p$ in the
velocity equation.

\smallbreak Whereas inspired  by results concerning the global
well-posedness of 3-D incompressible anisotropic Navier-Stokes
system with the third component of the initial velocity field being
large (see for instance \cite{PZ1}), Paicu and the first author
\cite{PZ2} relaxed the smallness condition in \cite{AP} so that
\eqref{1.3} still has a unique global solution provided that
\beq\label{small2} \Bigl(\|a_0\|_{B_{p,1}^{\f3p}}+\|u_0^{\rm
h}\|_{B^{-1+\frac3p}_{p,1}}\Bigr)\exp\Bigl( C_0
\|u_0^3\|_{B^{-1+\frac3p}_{p,1}}^2\ \Big)\leq c_0 \eeq for some
$c_0$ sufficiently small and $p\in ]1,6[.$ This smallness condition
\eqref{small2} was improved by Huang, Paicu and the first author in
\cite{HPZ3} to \beq\label{small3}
\Bigl(\|a_0\|_{L^\infty}+\|u_0^{\rm
h}\|_{B^{-1+\f{d}p}_{p,r}}\Bigr)\exp\Bigl(C_r \|u_0^d\|_{B^{-1+\f
dp}_{p,r}}^{2r}\Bigr)\leq c_0\eeq for some $p\in ]1,d[,$ $r\in
]1,\infty[$ and  in general $d$ space dimension.  We emphasize that
the proof in \cite{HPZ3,PZ2} used in a fundamental way the
algebraical structure of \eqref{1.3}, namely, $\dive u=0.$ The first
step is to obtain energy estimates on the horizontal components of
the velocity field on the one hand and then on the vertical
component on the other hand. Compared with \cite{PZ1}, the
additional difficulties with this strategy in \cite{ HPZ3, PZ2} are
that: there appears a hyperbolic type equation in \eqref{1.3} and
due to the appearance of $a$ in the momentum equation of
\eqref{1.3}, the pressure term is more difficult to be handled.

\medbreak On the other hand,  Chemin and Gallagher \cite{c-g}
initiated  the global large solutions of 3-D classical Navier-Stokes
system $(NS)$ with data which are slowly varying in one direction,
that is data of the form:
 \beno \bigl(v_0^{\rm h}+\e
u^{\rm h}_0, u_0^3\bigr)(x_h,\e x_3) \with x_{\rm h}=(x_1,x_2)\eeno
for smooth divergence free vector fields $v_0^{\rm h}$ and
$u_0=(u_0^{\rm h},u_0^3).$ The main idea behind the proof in
\cite{c-g} is that the solutions to 3-D Navier-Stokes equations
$(NS)$ slowly varying in one space variable can be well approximated
by solutions of 2-D Navier-Stokse equation. Yet just as the
classical 2-D Navier-Stokes system, 2-D inhomogeneous Navier-Stokes
equations is also globally well-posed with general initial data (see
\cite{danchin2, LS} for instance). This motivates  the authors
\cite{c-p-z} to prove the global well-posedness of \eqref{1.3} with
data of the form:
\begin{equation*}  a_0^{\ve}(x)=\ve^{\be} a_0(x_{\rm h},\ve x_3), \quad u_0^{\ve}(x)=(v_0^{\rm h}(x_{\rm h},\ve
x_3), 0)
\end{equation*}
for any $\be>1/4.$ Paicu and the first author \cite{PZZ3} proved the
global well-posedness of \eqref{1.3} with initial data of the form:
\begin{equation*}  a_0^{\ve}(x)=\ve^{\be} a_0(x_{\rm h},\ve x_3), \quad u_0^{\ve}(x)=(\ve u_0^{\rm h}, u_0^3)(x_{\rm h},\ve
x_3)
\end{equation*}
for any $\be>0.$

 \smallbreak Furthermore, for the classical Navier-Stokes system $(NS)$ with the so-called
ill-prepared data \beq \label{ill} (\ve^{1-\al} u_0^{\rm
h},\ve^{-\al} u_0^3)(x_{\rm h},\ve x_3), \eeq Chemin, Gallagher and
Paicu \cite{cgp} proved the global well-posedness of $(NS)$ in
$\R^2\times\Bbb{T}$  with initial data given by \eqref{ill} for
$\al=0$. Paicu and the second author \cite{mz1} proved the global
well-posedness of $(NS)$ in $\R^3$  with data given by \eqref{ill}
for $\al=\f1 2$. This result was improved lately by the authors in
\cite{mz2} for any $\al\in \bigl]\f1 2,1\bigr[.$ We remark that to
prove results in those relevant references, they may need to use
analytical type initial data and the tool developed by Chemin
\cite{Ch04} which consists in making analytic-type estimates and
controlling the size of the analyticity band simultaneously.

\medbreak Motivated by \cite{cgp, mz1, mz2}, we shall consider the
global solutions of  \eqref{eq:InhomoNS} with ill-prepared initial
data of the form \beq\label{data1}
\rho_{0}(x)=\overline{\rho}+\ve^\be a_0(x_{\rm h},\ve x_3),\quad
u_{0}(x)=\big(\ve^{1-\al}v^{\rm h}_0, \ve^{-\al}v^3_0\big)(x_{\rm
h},\ve x_3), \eeq
 where $\overline{\rho}$ is a positive constant,
 $ v^{\rm h}_0=(v^1_0,v^2_0)$ and $v_0=(v_0^{\rm h}, v_0^3)$ satisfies $\dive
 v_0=0.$ Of course, this type  data do not satisfy
the smallness conditions \eqref{small2} and \eqref{small3} no matter
how small $\e$ is.

Our main result in this paper  states as follows.

\begin{theorem}\label{thm:main}
{\sl Let $\delta>0$, $\al\in \bigl]0,\f13\bigr[, \be>2\al$ and
$\ga\in ]0,\ga_0[$ with $\ga_0\eqdefa \min\bigl(\f{\be-2\al}5,
\f{1-3\al}5\bigr)$. Let $a_0$ and the solenoidal vector field $v_0$
satisfy \beq\label{data2} \|(a_0,v_0)\|_X\eqdefa
\|a_0\|_{X_1}+\|v_0\|_{X_2}+\|v_0\|_{X_3}<\infty, \eeq where
\beq\label{i.1}
\begin{split}
&\|a_0\|_{X_1}\eqdefa\big\|e^{\delta|D|}a_0\big\|_{B^{1-\ga,\f12+\ga}}+\big\|e^{\delta|D|}a_0\big\|_{B^{1+\ga,\f12-\ga}}+\big\|e^{\delta|D|}a_0\big\|_{B^{\ga,\f32-\ga}},\\
&\|v_0\|_{X_2}\eqdefa\big\|e^{\delta|D|}v_0\big\|_{B^{-\f12+\ga,-\ga}}+\big\|e^{\delta|D|}v_0\big\|_{B^{0, -\f12}},\\
&\|v_0\|_{X_3}\eqdefa\big\|e^{\delta|D|}v_0\big\|_{B^{\ga,\f12-\ga}}
+\big\|e^{\delta|D|}v_0\big\|_{B^{-\ga,\f12+\ga}},\end{split} \eeq
Then there exists a small positive constant $\ve_0,$ which depends
on $\|(a_0,v_0)\|_X,$ such that for $\ve\leq \ve_0,$ the
inhomogeneous Navier-Stokes equations (\ref{eq:InhomoNS}) with
initial data given by \eqref{data1} has a unique global smooth
solution. }
\end{theorem}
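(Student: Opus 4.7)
The plan is to pass to the profile variables $(b,v)(t,y)$ by setting
\[
a(t,x)=\varepsilon^{\be}b(t,x_{\rm h},\varepsilon x_3),\qquad u(t,x)=\bigl(\varepsilon^{1-\al}v^{\rm h},\varepsilon^{-\al}v^{3}\bigr)(t,x_{\rm h},\varepsilon x_3),
\]
and rescaling the pressure as $p=\varepsilon^{1-\al}q$. A direct chain-rule computation (noting that $\p_{3}$ in the old variable becomes $\varepsilon\p_{y_3}$) shows that $(b,v,q)$ solves an inhomogeneous Navier--Stokes type system in the variable $y=(x_{\rm h},\varepsilon x_3)$ with anisotropic Laplacian $-\Delta_{y_{\rm h}}-\varepsilon^{2}\p_{y_3}^{2}$, a convective nonlinearity carrying the prefactor $\varepsilon^{1-\al}$, and a density--pressure coupling of size $\varepsilon^{\be}$. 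Since $\dive_y v=0$ and $(b,v)|_{t=0}=(a_0,v_0)$ lives in the anisotropic analytic norm \eqref{i.1} uniformly in $\varepsilon$, Theorem \ref{thm:main} reduces to proving a uniform-in-$\varepsilon$ global a priori bound on the $X$-norm of the profile.

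The core step is to propagate this norm dynamically by means of a time-dependent analyticity radius, following \cite{Ch04} and the strategy of \cite{mz1,mz2}. I would replace the fixed weight $e^{\delta|D|}$ appearing in \eqref{i.1} by $e^{(\delta-\lambda\theta(t))|D|}$ with $\theta(t)$ essentially $\int_0^t\|\na_{y_{\rm h}}e^{(\delta-\lambda\theta)|D|}v\|_{L^\infty}\,ds$, so that differentiating the analytic energy produces a negative contribution of the form $\lambda\dot\theta(t)\,\|\,|D|(\cdots)\|^2$. Choosing $\lambda$ large, this absorbs the cost of the convection $v\cdot\na_y v$ and the transport $v\cdot\na_y b$ after one trades a derivative using the anisotropic Bony decomposition in $B^{s_1,s_2}$. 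The three pieces $X_1,X_2,X_3$ are tuned precisely so that the paraproduct and remainder terms generated by the horizontal/vertical split of $v$ and $b$ all close; the slight $\ga$-offset between $X_2$ and $X_3$ is what allows one to separate the $v^{\rm h}$ and $v^3$ contributions when trading vertical for horizontal derivatives via anisotropic Bernstein.

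The quantitative thresholds $\al<\tfrac{1}{3}$, $\be>2\al$ and $\ga<\ga_0=\min\bigl(\tfrac{\be-2\al}{5},\tfrac{1-3\al}{5}\bigr)$ enter through the pressure estimate. The pressure $q$ satisfies an elliptic equation with variable coefficient $(1+\varepsilon^{\be}b)^{-1}$; expanding the coefficient produces an error of the schematic form $\varepsilon^{\be}b\,\na q$ which, after inversion of the anisotropic elliptic operator, leaves terms whose $\varepsilon$-weight is essentially $\varepsilon^{\be-2\al}$ (from the large vertical component $v^3$) or $\varepsilon^{1-3\al}$ (from the anisotropic second order operator acting on $v^3$). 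The factor $5$ in $\ga_0$ is the number of successive horizontal derivative trades needed to cycle through the density--velocity--pressure triangle at the anisotropic Besov level.

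Once such uniform-in-$\varepsilon$ analytic estimates are closed by a standard continuity argument, the local existence for \eqref{eq:InhomoNS} provided by \cite{AGZ2,AGZ3,DM} can be continued into a global smooth solution, and the analyticity radius $\delta-\lambda\theta(t)$ stays strictly positive for $\varepsilon\leq\varepsilon_0(\|(a_0,v_0)\|_X)$. The hardest part of the proof will be the weighted anisotropic estimate of $a\,\na q$: the $a$-equation is a pure transport with no smoothing of $b$, and the variable-coefficient elliptic problem for $q$ is both nonlocal and anisotropic, so the product estimates of \cite{HPZ3,PZ2,c-p-z} do not close by themselves and must be combined with the time-dependent analyticity weight and with the $\varepsilon^{\be}$-smallness to kill the worst paraproduct contributions in each of $X_1,X_2,X_3$.
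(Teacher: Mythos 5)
Your proposal follows essentially the same route as the paper: rescale to the anisotropic system with dissipation $\D_\ve=\D_{\rm h}+\ve^2\p_3^2$ and pressure gradient $\na^\ve q$, propagate analyticity with the time-dependent radius $\delta-\lambda\theta(t)$ in anisotropic Besov classes $B^{s_1,s_2}$ via Bony's decomposition, treat the pressure through the variable-coefficient elliptic equation using the $\ve^\be$-smallness of $G(\ve^\be a)$, and close by a bootstrap showing $\theta(t)\lesssim\ve^\ga$ while a companion quantity $\Psi(t)$ stays bounded. The only imprecise point is your heuristic for the factor $5$ in $\ga_0$: in the paper it is simply the conservative minimum absorbing the separate constraints $\ga<\min\bigl(\f{\be-\al}2,\f{1-2\al}4\bigr)$ from Proposition 2.3, $\ga\le\min\bigl(\f{\be-2\al}2,\f{1-3\al}4\bigr)$ from Proposition 2.4, and the bootstrap requirement in the proof of Theorem 2.2, not a count of horizontal derivative trades; and the paper also stresses (which you do not) that the transport equation forces analyticity in all three variables rather than only in $x_3$ as in the constant-density case.
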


\begin{rmk} The exact value of $\ve_0$ will be given by
\eqref{epthm}. In fact, we can also deduce from the proof of Theorem
\ref{thm:main} that there exists a positive constant $\eta$ such
that for any $a_0$ and divergence free vector field $v_0$ satisfying
\beno \|(a_0,v_0)\|_X\eqdefa
\|a_0\|_{X_1}+\|v_0\|_{X_2}+\|v_0\|_{X_3}\le \eta, \eeno the
inhomogeneous Navier-Stokes equations (\ref{eq:InhomoNS}) with
initial data given by \eqref{data1} has a unique global smooth
solution for any $\ve>0.$
\end{rmk}

Here the anisotropic Besov space $B^{\s,s}(\R^3)$ and all the other
functional framework will be presented  in the  next section.

\medbreak Let us remark that besides the difficulties caused by
proving global in time Cauchy-Kowalewskya type results  in
\cite{cgp, mz1, mz2} for the classical Navier-Stokes system $(NS)$,
here we shall encounter the following types of new difficulties:
\begin{itemize}
\item
Note that after the scaling transformation, we shall obtain a
inhomogeneous Navier-Stokes system \eqref{eq:InhomoNS-scaled} with
anisotropic dissipation $\D_{\rm h}+\ve^2\p_{3}^2$  and anisotropic
pressure gradient $-\na^\ve q$ for $\na^\ve=(\na_{\rm h},
\ve^2\p_3).$ To capture the subtle dissipation in this new system,
we shall use anisotropic Littlewood-Paley analysis, which has been
used successfully for both homogeneous and inhomogeneous
Navier-Stokes system \cite{bg, c-p-z, CZ5, mz1,mz2} lately. However
due to the appearance of the free transport equation in
\eqref{eq:InhomoNS}, the analyticity assumption only for the
vertical variable in \cite{cgp, mz1, mz2} will not be enough here.
Instead we shall consider the initial data which are analytic in all
the space variables. We emphasize once again that the algebraical
structural of the system \eqref{eq:InhomoNS-scaled} and the tool
developed by Chemin \cite{Ch04} will play also crucial roles in this
paper.

\item Since we can not use commutator's argument to deal with the
propagation of analytic regularity for transport equation, in order
to control the inhomogeneity $a_\Phi(t)$ in the critical anisotropic
Besov space $B^{1,\f12}(\R^3),$ we require the global in time $L^1$
estimate with values in Besov spaces, which are in the scalings of
both the space $B^{2,\f12}(\R^3)$ and in that of
$B^{1,\f12}_{2,1}(\R^3),$ for the convection velocity field.

\item However,
in order to control $\|v_\Phi\|_{L^1_t(B^{1,\f12})},$ we would
require the estimate of  $G(\ve^\be a)\na^\ve q$ in the space
$L^1_t(B^{-1.\f12})$ for  $G(r)\eqdefa\f{r}{1+r},$ which is
impossible due to product laws in two space dimensions. The idea to
overcome this difficulty is to use Lemma \ref{lem:parabolic-inhome}
so that we only need to handle the estimate of
$\bigl\|\bigl[G(\ve^\be a)\na^\ve
q\bigr]_\Phi\|_{L^1_t(B^{-1+\ga.\f12-\ga})}$ for some small
positive constant $\ga.$ This in turn would require the estimates of
$a_\Phi$ in $\wt{L}^\infty_t(B^{1-\ga,\f12+\ga})$ and in
$\wt{L}^\infty_t(B^{1+\ga,\f12-\ga}),$  and $v_0$ satisfying
$\|v_0\|_{X_2}$ being finite. This explains the reason why the data
in Theorem \ref{thm:main} are so much complicated.

\item As  in the proof of the global well-posdness of
inhomogeneous Navier-Stokes system with initial data in the critical
spaces, for instance in \cite{AGZ2,AGZ3,DM, HPZ3}, the pressure is
always  a big difficulty. We point out that the assumption for
$\be>2\al$ in Theorem \ref{thm:main} will only be used to handle the
estimates of $q_{31}$ in \eqref{p.3} and of $q_{41}$ in \eqref{p.4}.
Otherwise, the assumption for $\be>\al$ would be enough in Theorem
\ref{thm:main}.
\end{itemize}

\medbreak Let us end this introduction by the notations we shall use
in this context.\\

 For~$a\lesssim b$, we mean that there is a
uniform constant $C,$ which may be different on different lines but
be independent of $\ve,$ such that $a\leq Cb$. For $X$ a Banach
space and $I$ an interval of $\R,$ we denote by $C(I;\,X)$ the set
of continuous functions on~$I$ with values in $X.$    For $q$
in~$[1,+\infty],$ the notation $L^q(I;\,X)$ stands for the set of
measurable functions on $I$ with values in $X,$ such that
$t\longmapsto\|f(t)\|_{X}$ belongs to $L^q(I).$ We denote by
$L^p_T(L^q_{\rm h}(L^r_{\rm v}))$ the space $L^p([0,T];
L^q(\R_{x_{\rm h}};L^r(\R_{x_3})))$ with $x_{\rm h}=(x_1,x_2),$ and
$\nabla_{\rm h}=(\p_{x_1},\p_{x_2}),$ $\D_{\rm
h}=\p_{x_1}^2+\p_{x_2}^2.$ $\na_\ve=(\na_{\rm h}, \ve\p_3),$
$\D_\ve=\D_{\rm h}+\ve^2\p_3^2,$ and $\na^\ve=(\na_{\rm h},
\ve^2\pa_3)$.  Finally,  we denote by
$\bigl\{d_{k,\ell}\bigr\}_{k,\ell\in\Z}$ and
$\bigl\{d_{k,\ell}(t)\bigr\}_{k,\ell\in\Z}$ (resp. $
\bigl\{d_{k}\bigr\}_{k\in\Z}$ and $\bigl\{d_{k}(t)\bigr\}_{k\in\Z}$
) to be  generic elements in the sphere of $\ell^1(\Z^2)$ (resp.
$\ell^1(\Z)$).

\setcounter{equation}{0}
\section{Structure of the proof}

\subsection{Reduction to a rescaled problem}\label{sect2}
For simplicity, we shall  take $\bar{\rho}=1$ in \eqref{data1} in
what follows. As in \cite{cgp,PZZ3,mz1,mz2}, we shall seek a
solution of \eqref{eq:InhomoNS} with the form \beno
\rho(t,x)=1+\ve^\be a(t,x_{\rm h},\ve x_3),\quad
u(t,x)=\big(\ve^{1-\al}v^{\rm h}, \ve^{-\al}v^3\big)(t,x_{\rm h},\ve
x_3) \andf q(t,x)=p(t,x_{\rm h},\ve x_3). \eeno  This leads to the
following rescaled inhomogeneous Navier-Stokes equations
\begin{equation}\label{eq:InhomoNS-scaled}
\left\{
\begin{array}{ll}
\p_ta+\ve^{1-\al}v\cdot\na a=0,\\
(1+\ve^\be a)(\p_t v+\ve^{1-\al} v\cdot\na v)-\Delta_\ve v+\na^\ve q=0, \\
\textrm{div} v=0,\\
(a,v)|_{t=0}=(a_0,v_0).
\end{array}
\right.
\end{equation}
 Due to
$\textrm{div} v=0$, the rescaled pressure $q$ is determined by the
following elliptic equation \ben\label{eq:pressure-scaled}
-\textrm{div}\Big(\f 1 {1+\ve^{\be}a}\na^\ve q\Big)
=\ve^{1-\al}\textrm{div}(v\cdot\na v)-\textrm{div}\Big(\f 1
{1+\ve^{\be}a}\Delta_\ve v\Big), \een which is degenerate in $x_3$
direction when $\ve$ is small and whence $\na q$ is not uniformly
bounded in the usual isentropic Besov spaces. In order to handle
this problem and also to capture the subtle dissipative mechanism in
\eqref{eq:InhomoNS-scaled}, we need to use the anisotropic
Littlewood-Paley theory.

 As in \cite{bg,CDGG,c-p-z,CZ1,CZ5,Pa02,mz1,mz2}, the definitions of the spaces we are going to work with
require anisotropic dyadic decomposition   of the Fourier variables.
Let us recall from \cite{BCD} that \beq
\begin{split}
&\Delta_k^{\rm h}a=\cF^{-1}(\varphi(2^{-k}|\xi_{\rm
h}|)\widehat{a}),\qquad
\Delta_\ell^{\rm v}a =\cF^{-1}(\varphi(2^{-\ell}|\xi_3|)\widehat{a}),\\
&S^{\rm h}_ka=\cF^{-1}(\chi(2^{-k}|\xi_{\rm h}|)\widehat{a}),
\qquad\ S^{\rm v}_\ell a =
\cF^{-1}(\chi(2^{-\ell}|\xi_3|)\widehat{a})
 \quad\mbox{and}\\
&\Delta_ja=\cF^{-1}(\varphi(2^{-j}|\xi|)\widehat{a}),
 \qquad\ \
S_ja=\cF^{-1}(\chi(2^{-j}|\xi|)\widehat{a}), \end{split}
\label{1.0}\eeq where $\xi_{\rm h}=(\xi_1,\xi_2),$ $\cF a$ and
$\widehat{a}$ denote the Fourier transform of the distribution $a,$
$\chi(\tau)$ and~$\varphi(\tau)$ are smooth functions such that
 \beno
&&\Supp \varphi \subset \Bigl\{\tau \in \R\,/\  \ \frac34 \leq
|\tau| \leq \frac83 \Bigr\}\andf \  \ \forall
 \tau>0\,,\ \sum_{j\in\Z}\varphi(2^{-j}\tau)=1,\\
&&\Supp \chi \subset \Bigl\{\tau \in \R\,/\  \ \ |\tau|  \leq
\frac43 \Bigr\}\quad \ \ \andf \  \ \, \chi(\tau)+ \sum_{j\geq
0}\varphi(2^{-j}\tau)=1.
 \eeno

\begin{definition}\label{def2.1} {\sl Let us define the anisotropic Besov space $B^{s_1,s_2}(\R^3)$   as the space of
distribution $f$ in~$\cS'_h(\R^3),$ which means that $f$ is
in~$\cS'(\R^3)$ and satisfies~$\
\lim_{j\to-\infty}\|S_jf\|_{L^\infty}=0$,  such that
$$
\|f\|_{B^{s_1,s_2}}\eqdefa\sum_{k,\ell\in\Z}2^{ks_1+\ell
s_2}\|\Delta_k^{\rm h}\Delta_\ell^{\rm v} f\|_{L^2}
$$
is finite. }
\end{definition}

We also need to use Chemin-Lerner type spaces
$\widetilde{L}^p_T(B^{s_1,s_2})$ with its norm defined by \beq
\label{cheminl}\|u\|_{\widetilde{L}^p_T(B^{s_1,s_2})}\eqdefa
\sum_{k,\ell\in\Z}2^{ks_1+\ell s_2}\|\Delta_k^{\rm
h}\Delta_\ell^{\rm v}u\|_{L^p_T(L^2)}. \eeq It is easy to observe
that $\widetilde{L}^1_T(B^{s_1,s_2})=L^1_T(B^{s_1,s_2})$ and for any
$p>1$, \ben \|u\|_{{L}^p_T(B^{s_1,s_2})}\le
\|u\|_{\widetilde{L}^p_T(B^{s_1,s_2})}. \een

Theorem \ref{thm:main} can be deduced from the following theorem.

\begin{thm}\label{thm:main-r}
{\sl Under the same assumptions of  Theorem \ref{thm:main}, there
exists a positive constant $\ve_0,$ which depends on
$\|(a_0,v_0)\|_X,$ such that the rescaled inhomogeneous
Navier-Stokes equations (\ref{eq:InhomoNS-scaled}) has a unique
global smooth solution for any $\ve\in ]0,\ve_0[$.}
\end{thm}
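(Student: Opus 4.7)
\medbreak
\noindent\textbf{Proof proposal.}
The plan is to run a global Cauchy--Kowalewskaya-type argument in the spirit of \cite{Ch04,mz1,mz2}, but adapted to the anisotropic Littlewood--Paley framework of Definition \ref{def2.1} and to the inhomogeneous system \eqref{eq:InhomoNS-scaled}. First, I would introduce the analyticity phase
\[
\Phi(t,\xi)\eqdef (\delta-\lambda\theta(t))|\xi|,\qquad \theta(0)=0,\ \dot\theta(t)=\|v_\Phi(t)\|_{B^{1,1/2}}+\text{(lower order)},
\]
and work with the transformed unknowns $a_\Phi\eqdef e^{\Phi(t,D)}a$ and $v_\Phi\eqdef e^{\Phi(t,D)}v.$ The role of $\theta$ is to produce, each time one trades a bilinear term for an estimate in analytic norms, a positive contribution $-\lambda\dot\theta |D|$ that absorbs the commutator loss between $e^{\Phi}$ and products. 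With $\lambda$ chosen large (but universal), one then reduces the nonlinear analysis to bilinear estimates of the type $\|(fg)_\Phi\|\lesssim \|f_\Phi\|\,\|g_\Phi\|,$ proved through paraproduct/remainder decomposition in the anisotropic spaces $B^{s_1,s_2}$ together with standard tame anisotropic product laws.

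Next, I would run parallel a priori estimates for $v_\Phi$ and $a_\Phi.$ For the velocity I would localize \eqref{eq:InhomoNS-scaled}$_2$ by $\Delta_k^{\rm h}\Delta_\ell^{\rm v}$, apply $e^\Phi,$ take $L^2$ inner product with the corresponding localized piece, and then apply the parabolic smoothing lemma (the analogue of \textsl{Lemma \ref{lem:parabolic-inhome}} referred to in the introduction) which allows a small positive-regularity shift $\ga.$ This yields estimates of the form
\[
\|v_\Phi\|_{\wt L^\infty_t(B^{-1/2,1/2})}+\|v_\Phi\|_{L^1_t(B^{3/2,1/2})}
\lesssim \|v_0\|_{X_2}+\|v_0\|_{X_3}
+\bigl\|[\ve^{1-\al}v\cdot\na v]_\Phi\bigr\|_{L^1_t(B^{-1/2,1/2})}
+\bigl\|[G(\ve^\be a)\na^\ve q]_\Phi\bigr\|_{L^1_t(B^{-1+\ga,1/2-\ga})},
\]
where $G(r)=r/(1+r).$ The factors $\ve^{1-\al}$ and $\ve^\be$ in front of the nonlinearities are what ultimately yield smallness for $\ve$ small enough. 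For the transport equation $\p_t a+\ve^{1-\al}v\cdot\na a=0,$ since commutator techniques do not propagate analytic regularity, I would instead conjugate directly by $e^\Phi$ and exploit the positive term $\lambda\dot\theta |D|a_\Phi$ produced by differentiating the phase to dominate $\ve^{1-\al}\bigl[v\cdot\na a\bigr]_\Phi$ in $B^{1,1/2},$ $B^{1-\ga,1/2+\ga}$ and $B^{1+\ga,1/2-\ga}$ simultaneously (this is precisely why three norms on $a_0$ appear in $X_1$).

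The hardest step will be the pressure. From \eqref{eq:pressure-scaled}, $\na^\ve q$ solves an elliptic equation that degenerates as $\ve\to 0$ in the $x_3$ direction, so standard isotropic multiplier theory fails. I would decompose
\[
-\dive\bigl((1+\ve^\be a)^{-1}\na^\ve q\bigr)=\ve^{1-\al}\dive(v\cdot\na v)-\dive\bigl((1+\ve^\be a)^{-1}\Delta_\ve v\bigr),
\]
write $(1+\ve^\be a)^{-1}=1-G(\ve^\be a),$ split $q=q_1+\cdots+q_4$ following the source terms, and estimate each piece in anisotropic Besov norms by combining the smallness of $\ve^\be a$ in $L^\infty$ (so that a Neumann series inverts the operator) with fine paraproduct estimates. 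The constraint $\be>2\al$ is precisely what is needed to close the worst terms $q_{31},q_{41}$ mentioned after Theorem \ref{thm:main}, while the restriction $\al<1/3$ together with $\ga<\ga_0$ ensures that all the $\ve$-powers appearing when balancing $\ve^{1-\al}$ in the convection terms against $\ve^\be$ in the density terms remain positive.

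Finally, plugging all the estimates together one obtains a closed inequality of the form
\[
E(t)\le C_0\|(a_0,v_0)\|_X+C\ve^{\eta}\,\Psi(E(t)),\qquad E(t)\eqdef\|a_\Phi\|_{X^*}+\|v_\Phi\|_{Y^*}+\lambda\theta(t),
\]
for some power $\eta=\eta(\al,\be,\ga)>0$ and continuous $\Psi$ vanishing at $0,$ after which a standard continuity (bootstrap) argument in $t,$ starting from the local well-posedness of \eqref{eq:InhomoNS-scaled} (which produces an analytic solution on a small interval), gives $\theta(t)<\delta/(2\lambda)$ for all $t>0$ provided $\ve\le\ve_0(\|(a_0,v_0)\|_X),$ thereby preserving the analytic radius and yielding the global smooth solution. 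Uniqueness follows from the same bilinear estimates applied to the difference of two solutions in a slightly weaker norm, as in \cite{PZ2,HPZ3}.
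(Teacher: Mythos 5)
Your overall architecture matches the paper's: an analyticity phase $\Phi(t,\xi)=(\delta-\lambda\theta(t))|\xi|$, anisotropic paraproduct laws, Duhamel-type parabolic estimates with a small shift $\ga$, a direct (non-commutator) treatment of the transport equation, a five-part decomposition of the pressure with $\be>2\al$ needed for $q_{31},q_{41}$, and a bootstrap. But there is a genuine gap in how you propose to close.

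The problem is your simplified $\dot\theta(t)=\|v_\Phi(t)\|_{B^{1,1/2}}+\text{(lower order)}$ together with the single lumped quantity $E(t)=\|a_\Phi\|_{X^\ast}+\|v_\Phi\|_{Y^\ast}+\lambda\theta(t)$ and the closing inequality $E(t)\le C_0\|(a_0,v_0)\|_X+C\ve^\eta\,\Psi(E(t))$. With $\dot\theta$ as you wrote it, $\theta(T)\sim\|v_\Phi\|_{L^1_T(B^{1,1/2})}$, which is of order the size of the data, not of order $\ve^\ga$; and your closed inequality only gives $E(t)\lesssim\|(a_0,v_0)\|_X$, hence $\theta(t)\lesssim\|(a_0,v_0)\|_X/\lambda$. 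To conclude $\theta(t)<\delta/\lambda$ (so that the analyticity band survives) you would then need $\|(a_0,v_0)\|_X$ itself to be small, which defeats the purpose: the theorem is supposed to allow large data with only $\ve$ small. The paper's mechanism for avoiding this is precisely the $\ve$-dependent weights in (\ref{g.4}) ($\ve^{1-\al}$ in front of $\|v_\Phi^{\rm h}\|$ and $\ve^\ga$ in front of $\|v_\Phi^3\|$), which make $\theta$ a priori an $O(\ve^\ga)$ quantity; combined with keeping the two controlled quantities separate (Proposition \ref{prop:tht} for $\theta$, Proposition \ref{prop:Psi} for $\Psi$) so that the estimate for $\theta$ reads $\theta(t)\le C\ve^\ga\|v_0\|_{X_2}+\text{(small)}\cdot\Psi(t)\theta(t)$ while $\Psi(t)$ is only required to be bounded (by $K_0=4C(\|a_0\|_{X_1}+\|v_0\|_{X_3})$, which can be large). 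Without this asymmetric, weighted structure, your bootstrap does not close for large data, so you need to restore the $\ve$-weights in $\dot\theta$ and track $\theta$ and $\Psi$ as two distinct quantities in the continuity argument.
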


\begin{rmk} More detailed information concerning the solution of
(\ref{eq:InhomoNS-scaled}) obtained in Theorem \ref{thm:main-r} will
be presented in Subsection \ref{subsect2.3}. As a matter of fact, we
shall prove that for $\theta(t), \psi(t)$ determined respectively by
\eqref{g.4} and \eqref{psias}, there holds \beno \sup_{t\geq
0}\theta(t)\leq C\ve^\ga\|v_0\|_{X_2}\andf \sup_{t\geq 0}\Psi(t)\leq
C\big(\|a_0\|_{X_1}+\|v_0\|_{X_3}\big). \eeno
\end{rmk}

\subsection{The functional setting}

The proof of Theorem \ref{thm:main-r} relies on the exponential
decay estimate for the Fourier transform of the solution. For this
end, we define \beq\label{k.2} f_\Psi(t)\eqdefa
\cF^{-1}\bigl(e^{\Psi(t,\cdot)}\widehat f(t,\cdot)\bigr). \eeq

We introduce the first key quantity $\theta(t)$ describing the
evolution of the analytic band of the solution, which is defined by
\beq\label{g.4}
\begin{split}
\dot\theta(t)=&\ve^{1-\al}\Bigl(\|v_\Phi^{\rm
h}(t)\|_{B^{1,\f12}}+\|v_\Phi^{\rm h}(t)\|_{B^{1-\ga,\f12+\ga}}
+\|v_\Phi^{\rm h}(t)\|_{B^{1+\ga,\f12-\ga}}+\ve^{1+\ga}\|v_\Phi^{\rm
h}(t)\|_{B^{-\ga,\f32+\ga}}\Bigr)\\
&+\ve^\ga\Bigl(\|v_\Phi^3(t)\|_{B^{1,\f12}}+\|v_\Phi^3(t)\|_{B^{1+\ga,\f12-\ga}}+\ve^{1+\ga}\|v_\Phi^3(t)\|_{B^{-\ga,\f32+\ga}}\Bigr),
\end{split} \eeq
with $\theta(0)=0$, where the phase $\Phi$ is given by
\ben\label{def:phase} \Phi(t,\xi)\eqdefa
(\delta-\lambda\theta(t))|\xi| \een for some $\lambda>0$ that will
be chosen later on. To control the growth of $\theta(t)$, we need to
introduce the second key quantity $\Psi(t)$ defined by
\beq\label{psias} \Psi(t)\eqdefa
\Psi_1(t)+\Psi_2(t)+\Psi_3(t)+\Psi_4(t), \eeq where \beq\label{p.1}
\begin{split}
\Psi_1(t)\eqdefa&\|a_\Phi\|_{\widetilde{L}^\infty_t(B^{1,\f12})}+\|a_\Phi\|_{\widetilde{L}^\infty_t(B^{1+\ga,\f12-\ga})}
+\|a_\Phi\|_{\widetilde{L}^\infty_t(B^{1-\ga,\f12+\ga})}
+\ve^{3\al+3\ga}\|a_\Phi\|_{\widetilde{L}^\infty_t(B^{\ga,\f32-\ga})},\\
\Psi_2(t)\eqdefa&
\|v_\Phi\|_{\widetilde{L}^\infty_t(B^{0,\f12})}+\|v_\Phi\|_{\widetilde{L}^\infty_t(B^{\ga,\f12-\ga})}
+\|v_\Phi\|_{\widetilde{L}^\infty_t(B^{-\ga,\f12+\ga})},\\
\Psi_3(t)\eqdefa&\ve^{2\al+2\ga}\Big(\|v_\Phi^{\rm
h}\|_{{L}^1_t(B^{2,\f12})}+\|v_\Phi^{\rm
h}\|_{{L}^1_t(B^{2+\ga,\f12-\ga})}+\|v_\Phi^{\rm
h}\|_{{L}^1_t(B^{2-\ga,\f12+\ga})}
+\ve^{2}\|v_\Phi^{\rm h}\|_{{L}^1_t(B^{0,\f52})}\\
&+\ve^{2}\|v_\Phi^{\rm
h}\|_{{L}^1_t(B^{\ga,\f52-\ga})}+\ve^{2}\|v_\Phi^{\rm
h}\|_{{L}^1_t(B^{-\ga,\f52+\ga})}\Big)
+\|v_\Phi^3\|_{{L}^1_t(B^{2,\f12})}+\|v_\Phi^3\|_{{L}^1_t(B^{2+\ga,\f12-\ga})}\\
&+\|v_\Phi^3\|_{{L}^1_t(B^{2-\ga,\f12+\ga})}
+\ve^{2}\|v_\Phi^3\|_{{L}^1_t(B^{0,\f52})}+\ve^{2}\|v_\Phi^3\|_{{L}^1_t(B^{\ga,\f52-\ga})}+\ve^{2}\|v_\Phi^3\|_{{L}^1_t(B^{-\ga,\f52+\ga})},\\
\Psi_4(t)\eqdefa &\ve^{\al+\ga}\Big(\|v_\Phi^{\rm
h}\|_{\widetilde{L}^2_t(B^{1,\f12})}+\|v_\Phi^{\rm
h}\|_{\widetilde{L}^2_t(B^{1+\ga,\f12-\ga})}+\|v_\Phi^{\rm
h}\|_{\widetilde{L}^2_t(B^{1-\ga,\f12+\ga})}
+\ve\|v_\Phi^{\rm h}\|_{\widetilde{L}^2_t(B^{0,\f32})}\\
&+\ve\|v_\Phi^{\rm
h}\|_{\widetilde{L}^2_t(B^{\ga,\f32-\ga})}+\ve\|v_\Phi^{\rm
h}\|_{\widetilde{L}^2_t(B^{-\ga,\f32+\ga})}\Big)
+\|v_\Phi^3\|_{\widetilde{L}^2_t(B^{1,\f12})}+\|v_\Phi^3\|_{\widetilde{L}^2_t(B^{1+\ga,\f12-\ga})}\\
&+\|v_\Phi^3\|_{\widetilde{L}^2_t(B^{1-\ga,\f12+\ga})}
+\ve\|v_\Phi^3\|_{\widetilde{L}^2_t(B^{0,\f32})}+\ve\|v_\Phi^3\|_{\widetilde{L}^2_t(B^{\ga,\f32-\ga})}+\ve\|v_\Phi^3\|_{\widetilde{L}^2_t(B^{-\ga,\f32+\ga})}.
\end{split}
\eeq

The proof of Theorem \ref{thm:main-r} will be based on the following
two propositions, whose proofs will be presented in Section
\ref{sect7} and Section \ref{sect8} respectively.  Let us make the
{\it a priori} assumption that \ben\label{assum-a} \Psi_1(T)\le
K,\een which will be determined hereafter.

\begin{prop}\label{prop:tht} {\it Under the assumption that $\al\in
\bigl]0,\f12\big[, \be>\al$ and $0<\ga< \min\bigl(\f  {\be-\al} 2,
\f {1-2\al} 4\bigr),$ there exists a positive constant $C$  such
that, for any positive $\lambda$ and for any $t$ satisfying
$\theta(t)\le \delta/\lambda$, and for $\epsilon$ given by
\eqref{k.50}, $\ve$ is so small that \beq\label{epsilon} \ve\leq
\min\Bigl(\Bigl(\f{\epsilon}{K}\Bigr)^{\f1\be},\Bigl(\f1
{2CK}\Bigr)^{\f1{\be-\ga}}\Bigr).\eeq Then we have \beno \tht(t) \le
C\Bigl(\ve^\ga\|v_0\|_{X_2}+\max\big(\ve^{\be-\al-2\ga},\ve^{\al},\ve^{1-2\al-2\ga}\bigr)\Psi(t)\tht(t)\Bigr).
\eeno }
\end{prop}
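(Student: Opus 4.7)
\textbf{Proof plan for Proposition \ref{prop:tht}.} The plan is to integrate \eqref{g.4} in time and to estimate each of the resulting $L^1_t$-norms of $v_\Phi^{\rm h}$ and $v_\Phi^3$ by splitting $v_\Phi$ into a free heat-semigroup part (which will supply the linear contribution $\ve^\ga\|v_0\|_{X_2}$) and a Duhamel nonlinear part (which will be absorbed into $\max(\ve^{\beta-\al-2\ga},\ve^\al,\ve^{1-2\al-2\ga})\Psi(t)\theta(t)$). Throughout, the standing hypothesis $\theta(t)\le\delta/\lambda$ keeps $\Phi(t,\xi)\ge 0$, so that the convexity $|\xi|\le|\xi-\eta|+|\eta|$ can be exploited whenever a weight $e^\Phi$ has to be moved through a product.

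\textbf{Linear part.} For each dyadic block use the anisotropic heat-kernel estimate
\[
\|e^{t\Delta_\ve}\Delta_k^{\rm h}\Delta_\ell^{\rm v}v_0\|_{L^2}\lesssim e^{-c(2^{2k}+\ve^2 2^{2\ell})t}\|\Delta_k^{\rm h}\Delta_\ell^{\rm v}v_0\|_{L^2},
\]
integrate in $t$ to obtain the factor $1/(2^{2k}+\ve^2 2^{2\ell})$, and apply the weighted AM--GM inequality $2^{2k}+\ve^2 2^{2\ell}\gtrsim 2^{2k(1-\eta)}\ve^{2\eta}2^{2\ell\eta}$ with $\eta=\f14+\f{\ga}{2}$ to convert, for instance, the target scale $B^{1,\f12}$ appearing in $\dot\theta$ into $B^{-\f12+\ga,-\ga}$ on the initial data, at the cost of a factor $\ve^{-\f12-\ga}$. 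Combined with the front factor $\ve^{1-\al}$ this yields $\ve^{\f12-\al-\ga}\le\ve^\ga$, which uses exactly the hypothesis $\ga<\f{1-2\al}{4}$. Analogous choices of $\eta$ for each remaining scale in \eqref{g.4} reduce every free contribution to the two norms defining $\|v_0\|_{X_2}$, namely $\|e^{\delta|D|}v_0\|_{B^{-\f12+\ga,-\ga}}$ and $\|e^{\delta|D|}v_0\|_{B^{0,-\f12}}$, with a uniform prefactor $\ve^\ga$.

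\textbf{Nonlinear part.} Writing the equation for $v_\Phi$ as
\[
\partial_t v_\Phi-\Delta_\ve v_\Phi+\lambda\dot\theta(t)|D|v_\Phi+\nabla^\ve q_\Phi=-\ve^{1-\al}(v\cdot\nabla v)_\Phi+\bigl(G(\ve^\beta a)(\Delta_\ve v-\nabla^\ve q)\bigr)_\Phi,\quad G(r):=\tfrac{r}{1+r},
\]
I apply anisotropic parabolic smoothing to shift each target $L^1_t(B^{s_1,s_2})$ norm onto the right-hand side, and then use anisotropic product laws together with the weight transfer $|\xi|\le|\xi-\eta|+|\eta|$. The crucial point is that every product estimate naturally splits one factor into the $\Psi$-family and one factor into the $L^1_t$-family of \eqref{g.4}, schematically
\[
\int_0^t\|(fg)_\Phi\|_{B^{s_1-1,s_2}}\,ds\lesssim\|f_\Phi\|_{\widetilde L^\infty_t(B^{\sigma_1,\sigma_2})}\int_0^t\|g_\Phi(s)\|_{B^{\tau_1,\tau_2}}\,ds,
\]
so that the $\widetilde L^\infty_t$-factor is a piece of $\Psi(t)$ while the time integral of the second factor is dominated by $\theta(t)$. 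The collected powers of $\ve$ coming from the $\ve^{1-\al}$ convection, the $\ve^\beta$ density weight, the $\ve^{-\al}$ anisotropy of $v^3$, and the $\ve^{-2\ga}$ loss from exchanging anisotropic scales combine into $\max(\ve^{\beta-\al-2\ga},\ve^\al,\ve^{1-2\al-2\ga})$.

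\textbf{Main obstacle.} The delicate piece is the pressure $\nabla^\ve q$: the elliptic equation \eqref{eq:pressure-scaled} is degenerate in $x_3$ when $\ve\to 0$, and standard product laws in two horizontal dimensions are forbidden at the scaling exponent $-1$. One therefore invokes Lemma \ref{lem:parabolic-inhome} to sacrifice a small amount $\ga$ of regularity, landing in the scale $B^{-1+\ga,\f12-\ga}$; this in turn forces simultaneous control of $a_\Phi$ in the three scales $B^{1,\f12}$, $B^{1+\ga,\f12-\ga}$ and $B^{1-\ga,\f12+\ga}$ that appear in $\Psi_1$, as well as the extra assumption on $v_0$ encoded in $\|v_0\|_{X_2}$. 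The restriction $\beta>2\al$ enters only in the pressure pieces $q_{31}$ and $q_{41}$, to ensure that $\ve^{\beta-\al-2\ga}$ remains a positive power of $\ve$; everywhere else $\beta>\al$ would suffice. Verifying the anisotropic product laws with the analytic phase, and in particular the pressure estimates via Lemma \ref{lem:parabolic-inhome}, is the main technical work of the proposition.
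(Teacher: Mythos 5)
Your plan matches the paper's proof: Duhamel expansion $v=e^{t\D_\ve}v_0+E_\ve(F_1+F_2+F_3)$, anisotropic heat-kernel decay for the free part (Lemmas \ref{lem:parabolic-home} and \ref{lem3.1}, which encapsulate your weighted AM--GM step and deliver the $\ve^\ga\|v_0\|_{X_2}$ contribution), and Lemma \ref{lem:parabolic-inhome} together with the product laws of Corollary \ref{col3.1} for the Duhamel part, with the pressure controlled through Proposition \ref{prop6.1} after the $\ga$-regularity transfer you describe. Two small corrections: the source in your reformulation should read $-\ve^{1-\al}(v\cdot\na v)-G(\ve^\be a)(\D_\ve v-\na^\ve q)$ (your $G$-term has the wrong sign), and the condition $\be>2\al$ you invoke for $q_{31},q_{41}$ only enters the proof of Proposition \ref{prop:Psi}; in the present statement the hypothesis $\be>\al$ combined with $\ga<(\be-\al)/2$ already makes $\be-\al-2\ga>0$, which is all that the pressure bound of Proposition \ref{prop6.1} requires.
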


\begin{prop}\label{prop:Psi}
{\it Let $\al\in \bigl]0,\f13\big[, \be>2\al,$ $0<\ga\le
\min\bigl(\f{\be-2\al}2,\f{1-3\al}4\bigr),$ and $\ve$ satisfy
\eqref{epsilon}. Then there exists  a positive constant $C$ such
that, for any positive $\lambda$ and for any $t$ satisfying
$\theta(t)\le \delta/\lambda$, we have \beno \Psi(t)\le
C\big(\|a_0\|_{X_1}+\|v_0\|_{X_3}\big)+C\biggl(\f 1
{\lambda}+\max\Bigl(\ve^\ga,\ve^{\be-2\al-2\ga},\ve^{1-3\al-4\ga},
K\ve^{\be-2\al-\ga}\Bigr)\Psi(t)\biggr)\Psi(t). \eeno}
\end{prop}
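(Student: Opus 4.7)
The plan is to control $\Psi_1$ from the transport equation on $a$ and $\Psi_2+\Psi_3+\Psi_4$ from the parabolic-type estimate on $v$. For $\Psi_1$, I would apply the analytic multiplier $e^{\Phi(t,D)}$ to $\p_t a+\ve^{1-\al}v\cdot\na a=0$ and exploit the subadditivity $e^{\Phi(t,\xi)}\le e^{\Phi(t,\xi-\eta)}e^{\Phi(t,\eta)}$, which is valid because $\Phi$ is linear in $|\xi|$. The weighted convection $\ve^{1-\al}(v\cdot\na a)_\Phi$ then factors into a bilinear expression in $v_\Phi$ and $a_\Phi$, and the loss of one derivative on $a$ can be paid for by the weight $e^{-\lambda\theta(t)|\xi|}$, producing an explicit factor $\dot\theta(s)$. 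A dyadic energy argument in Chemin-Lerner spaces combined with anisotropic Bony decomposition then gives, for each summand of $\Psi_1$, a bound of the form
\[
\|a_\Phi\|_{\widetilde{L}^\infty_t(B^{s_1,s_2})}\le\|e^{\delta|D|}a_0\|_{B^{s_1,s_2}}+C\int_0^t\dot\theta(s)\,\|a_\Phi(s)\|_{B^{s_1,s_2}}\,ds,
\]
and since $\theta(t)\le\delta/\lambda$, the integral contributes a $\lambda^{-1}\Psi_1(t)$ term once one notices that $\dot\theta$ is itself a sum of norms already controlled by $\Psi_3+\Psi_4$.

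For the velocity I would rewrite the momentum equation of \eqref{eq:InhomoNS-scaled} as
\[
\p_t v-\D_\ve v+\na^\ve q=-\ve^{1-\al}v\cdot\na v-G(\ve^\be a)\bigl(\na^\ve q-\D_\ve v\bigr),\qquad G(r)=\f r{1+r},
\]
and apply the anisotropic parabolic smoothing estimate to $v_\Phi$ across the full scale of Besov regularities appearing in $\Psi_2$, $\Psi_3$, $\Psi_4$. The initial-data term collapses to $\|v_0\|_{X_3}$ by definition. For the convection $\ve^{1-\al}(v\cdot\na v)_\Phi$, splitting $v\cdot\na v=v^{\rm h}\cdot\na_{\rm h}v+v^3\p_3v$ and using anisotropic product laws in Chemin-Lerner spaces, the horizontal piece extracts two $\ve^{-\al-\ga}$ factors from the $\Psi_4$-normalised $v_\Phi^{\rm h}$ to yield a quadratic $\ve^{1-3\al-2\ga}\Psi(t)^2$, while the vertical piece incurs an extra $\ve^{-\al}$ on $v^3$ and thus produces $\ve^{1-3\al-4\ga}\Psi(t)^2$; both exponents remain positive under $\al<\f13$ and small $\ga$.

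The main obstacle is the pressure contribution $G(\ve^\be a)\na^\ve q$. From \eqref{eq:pressure-scaled}, $\na^\ve q$ is recovered by inverting a degenerate anisotropic divergence with $a$-dependent coefficient; I would decompose $\na^\ve q=\sum_{i=1}^4\na^\ve q_i$ according to whether the source is convection or viscosity and whether it carries a factor $G(\ve^\be a)$. The two ``clean'' pieces can be treated by direct anisotropic product laws, but the $a$-dependent pieces $q_{31}$ and $q_{41}$ mix in a $\D_\ve v$-type source that is already at the critical scale. To close them I would appeal to Lemma~\ref{lem:parabolic-inhome}, which lets one work in the shifted anisotropic Besov space $B^{-1+\ga,\f12-\ga}$ rather than the critical $B^{-1,\f12}$ (the workaround emphasised in the introduction, since the two-dimensional horizontal product law fails at the critical index), then invoke the a priori bound $\Psi_1(T)\le K$ together with $\ve^\be K\ll 1$ from \eqref{epsilon} to linearise $G(\ve^\be a)\simeq\ve^\be a$. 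Matching $\ve$-exponents against the $\ve^{2\al+2\ga}$ prefactors of $\Psi_3$ forces exactly $\be>2\al$ and produces the term $K\ve^{\be-2\al-\ga}\Psi(t)^2$.

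Adding the four estimates for $\Psi_1,\dots,\Psi_4$ and choosing $\ga\le\min\bigl(\f{\be-2\al}2,\f{1-3\al}4\bigr)$ so that every remaining $\ve$-exponent is nonnegative, the quadratic contributions assemble into
\[
\Psi(t)\le C\bigl(\|a_0\|_{X_1}+\|v_0\|_{X_3}\bigr)+C\Bigl(\f1\lambda+\max\bigl(\ve^\ga,\ve^{\be-2\al-2\ga},\ve^{1-3\al-4\ga},K\ve^{\be-2\al-\ga}\bigr)\Psi(t)\Bigr)\Psi(t),
\]
which is the claimed bound.
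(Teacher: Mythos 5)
Your overall architecture is right: control $\Psi_1$ by the analytic transport estimate from Proposition~\ref{prop:transport} and Proposition~\ref{prop5.2}, control $\Psi_2,\Psi_3,\Psi_4$ by Duhamel with the anisotropic heat kernel and the elliptic decomposition \eqref{l.2} of the pressure, and use the $\gamma$-shift of Lemma~\ref{lem:parabolic-inhome} together with $\ve^\be K\le\epsilon$ to tame the $a$-dependent pieces of $q$. That skeleton matches the paper's.

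There is, however, a genuine gap in the pressure argument. You identify $q_{31}$ and $q_{41}$ as ``the $a$-dependent pieces'' and propose to close them with Lemma~\ref{lem:parabolic-inhome} and the linearisation $G(\ve^\be a)\simeq\ve^\be a$. But in the paper's decomposition \eqref{p.3}--\eqref{p.4}, $q_{31}=\ve^{1-\al}(-\D_\ve)^{-1}\p_3\dive_{\rm h}T^{\rm v}(v^3,v^{\rm h})$ and $q_{41}=\ve^{1-\al}(-\D_\ve)^{-1}\p_3 T^{\rm v}(v^3,\dive_{\rm h}v^{\rm h})$ contain no $a$ at all: they are the \emph{vertical low-high paraproducts of the convective pressure}. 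Their danger is not the density coefficient but the $\ve^{-\al}$ loss carried by $v^3$ in the original scaling, which cannot be absorbed by the $\ve^{2\al+2\ga}$ prefactor present only in $\Psi_3$. For the $\Psi_2$-type norm $\|\cdot\|_{K_t}$ there is no such prefactor, and Lemma~\ref{lem:parabolic-inhome} alone gives at best $\ve^{-\al}\Psi^2$, which is not small. The paper resolves this with Lemma~\ref{lem:analytic} (and Remark~\ref{rem:product}): because $T^{\rm v}$ puts $v^3$ in the low-vertical-frequency slot, the bound of the source is linear in $\dot\theta(t)$, and the analytic weight $e^{-\lambda\int_{t'}^t\dot\theta\,d\tau\,|\xi|}$ trades the extra derivative against a $\frac{C}{\lambda}$ factor, yielding \eqref{p.5}--\eqref{p.6}. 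The same mechanism is needed for the vertical paraproduct $F_{13}^{\rm h}=-\ve^{1-\al}\p_3 T^{\rm v}(v^3,v^{\rm h})$ in the convection. Your write-up never invokes Lemma~\ref{lem:analytic} on the velocity side, so as stated the $\Psi_2$ estimate would not close; the $K\ve^{\be-2\al-\ga}\Psi^2$ term you attribute to $q_{31},q_{41}$ in fact arises from the truly $a$-dependent pieces $q_1$, $q_{53}$ (and the remainder pieces $q_{32},q_{42}$) via Proposition~\ref{prop6.2} and Remark~\ref{rem:pressure}. As a minor slip, the reformulation of the momentum equation should read $\p_t v-\D_\ve v+\na^\ve q=-\ve^{1-\al}v\cdot\na v+G(\ve^\be a)\big(\na^\ve q-\D_\ve v\big)$ (sign of the last term).
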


\subsection{Proof of Theorem \ref{thm:main-r}}\label{subsect2.3}

The proof of Theorem \ref{thm:main-r} essentially follows from the
{\it a priori} estimates for smooth enough solutions of
(\ref{eq:InhomoNS-scaled}) (see \cite{cgp} for instance). For
simplicity, here we only present the global {\it a priori} estimates
for smooth enough solutions of (\ref{eq:InhomoNS-scaled}). Toward
this, for $\theta(t), \Psi(t)$ determined respectively by
\eqref{g.4} and \eqref{psias}, we take $K=K_0\eqdefa
4C\big(\|a_0\|_{X_1}+\|v_0\|_{X_3}\big)$ in \eqref{assum-a} and
define  \beq\label{Tstar} T^\ast \eqdefa\sup\big\{\ T>0: \ \theta(T)
\le 4C\ve^\ga\|v_0\|_{X_2} \andf \Psi(T)\le K_0\ \big\}. \eeq Then
it suffices to prove that $T^*=+\oo$ provided that $\ve$ is
sufficiently small. In order to use Proposition \ref{prop:tht} and
Proposition \ref{prop:Psi}, we need to assume that $\theta(T)\le \f
\delta \lambda,$  which leads to the condition that \beq
\label{epga} 4C\ve^\ga\|v_0\|_{X_2} \le \f \delta \lambda. \eeq Then
under the assumptions of Theorem \ref{thm:main},  we infer from
Proposition \ref{prop:tht} and Proposition \ref{prop:Psi} that for
all $T\in [0,T^\ast[$,
\begin{equation} \label{equ:thtpsi}
\begin{split}
&\theta(T)\le C\bigl(\ve^\ga+4\ve^{2\ga}K_0\bigr)\|v_0\|_{X_2},\andf\\
&\Psi(T)\le \f{K_0}4+C\Bigl(\f 1 {\lambda}+\ve^\ga
K_0+\ve^{2\ga}K_0^2\Bigr)K_0,
 \end{split}
 \end{equation} provided that $\ve$ is so small that $\ga\leq
 \min\bigl(\f{\be-\al}4,\f{\be-2\al}3,\f{1-3\al}5\bigr).$
We then select $\lambda$ so large  that ${\lambda}={4C}$, and then
choose $\ve$ to be so small that there holds \eqref{epsilon},
\eqref{epga} and $8C\ve^\ga K_0\leq 1,$ that is \beq\label{epthm}
\ve\leq
\min\biggl(\Bigl(\f{\epsilon}{K_0}\Bigr)^{\f1\be},\Bigl(\f1{2CK_0}\Bigr)^{\f1{\be-\ga}},
\Bigl(\f1{8CK_0}\Bigr)^{\f1\ga},
\Bigl(\f{\delta}{16C^2\|v_0\|_{X_2}}\Bigr)^{\f1{\ga}}\biggr). \eeq
With this choice of $\ve$, we infer from (\ref{equ:thtpsi})  that
for all $T\in [0,T^\ast[$, \beno \theta(T)\leq
2C\ve^\ga\|v_0\|_{X_2} \andf \Psi(T)\leq \f{3K_0}4, \eeno which
contradicts with \eqref{Tstar} if $T^*<+\oo$. This in turn shows
that $T^*=\infty,$ and whence we conclude the proof of Theorem
\ref{thm:main-r}.\ef

\setcounter{equation}{0}

\section{The action of subadditive phases on products}

For any function $f$, we denote by $f^+$ the inverse Fourier
transform of $|\widehat f|$. Let us notice that the map $f\mapsto
f^+$ preserves the norm of the anisotropic Besov space $B^{s_1,s_2}$
given by Definition \ref{def2.1}. Throughout this section, $\Phi$
denotes a locally bounded function on $\R^+\times \R^3$ which
verifies  \beq\label{subaddi} \Phi(t,\xi)\le
\Phi(t,\xi-\eta)+\Phi(t,\eta). \eeq

\medbreak
 For the
convenience of the readers, we recall the following anisotropic
Bernstein type lemma from \cite{CZ1, Pa02}:

\begin{lem} \label{lem:Berstein}
 {\sl Let $\cB_{\rm h}$ (resp.~$\cB_{\rm v}$) a ball
of~$\R^2_{\rm h}$ (resp.~$\R_{\rm v}$), and~$\cC_{\rm h}$
(resp.~$\cC_{\rm v}$) a ring of~$\R^2_{\rm h}$ (resp.~$\R_{\rm v}$);
let~$1\leq p_2\leq p_1\leq \infty$ and ~$1\leq q_2\leq q_1\leq
\infty.$ Then there holds:

\smallbreak\noindent If the support of~$\wh a$ is included
in~$2^k\cB_{\rm h}$, then
\[
\|\partial_{x_{\rm h}}^\alpha a\|_{L^{p_1}_{\rm h}(L^{q_1}_{\rm v})}
\lesssim 2^{k\left(|\al|+2\left(\f 1 {p_2}-\f 1 {p_1}\right)\right)}
\|a\|_{L^{p_2}_{\rm h}(L^{q_1}_{\rm v})}.
\]
If the support of~$\wh a$ is included in~$2^\ell\cB_{\rm v}$, then
\[
\|\partial_{x_3}^\beta a\|_{L^{p_1}_{\rm h}(L^{q_1}_{\rm v})}
\lesssim 2^{\ell\left(\beta+\left(\f 1{q_2}-\f1{q_1}\right)\right)}
\| a\|_{L^{p_1}_{\rm h}(L^{q_2}_{\rm v})}.
\]
If the support of~$\wh a$ is included in~$2^k\cC_{\rm h}$, then
\[
\|a\|_{L^{p_1}_{\rm h}(L^{q_1}_{\rm v})} \lesssim
2^{-kN}\sup_{|\al|=N} \|\partial_{x_{\rm h}}^\al a\|_{L^{p_1}_{\rm
h}(L^{q_1}_{\rm v})}.
\]
If the support of~$\wh a$ is included in~$2^\ell\cC_{\rm v}$, then
\[
\|a\|_{L^{p_1}_{\rm h}(L^{q_1}_{\rm v})} \lesssim 2^{-\ell N}
\|\partial_{x_3}^N a\|_{L^{p_1}_{\rm h}(L^{q_1}_{\rm v})}.
\]
}
\end{lem}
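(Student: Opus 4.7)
The plan is to reduce each of the four estimates to a convolution identity in the group of variables singled out by the Fourier localization, and then apply Young's inequality in that group together with Minkowski's integral inequality to push the inert-variable norm inside the convolution. The latter step is the only feature that distinguishes the anisotropic case from the classical isotropic Bernstein lemma, because $L^{p_1}_{\rm h}(L^{q_1}_{\rm v})$ is not a tensor-product norm.

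For the first inequality I would pick $\tilde\varphi\in\cS(\R^2)$ with $\tilde\varphi\equiv 1$ on $\cB_{\rm h}$ and set $\Phi_k(x_{\rm h})\eqdefa 2^{2k}\cF^{-1}(\tilde\varphi)(2^k x_{\rm h})$. The Fourier support hypothesis yields
\[
(\pa_{x_{\rm h}}^\al a)(x_{\rm h},x_3)=\bigl(\pa_{x_{\rm h}}^\al\Phi_k \ast_{\rm h} a(\cdot,x_3)\bigr)(x_{\rm h}),
\]
where $\ast_{\rm h}$ denotes convolution in $x_{\rm h}$ only. Applying Minkowski's integral inequality to bring the $L^{q_1}_{\rm v}$ norm inside the horizontal convolution, and then Young's inequality horizontally with $1+\f1{p_1}=\f1{r}+\f1{p_2}$, produces, after a two-dimensional rescaling count, the factor $2^{k(|\al|+2(1/p_2-1/p_1))}$. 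The second inequality is entirely analogous with horizontal and vertical roles swapped; the single vertical dimension replaces $2(\f1{p_2}-\f1{p_1})$ by $\f1{q_2}-\f1{q_1}$.

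For the two reverse inequalities I would use the ring hypothesis to invert the derivative. Choose $\tilde\varphi$ supported in a slightly larger ring with $\tilde\varphi\equiv 1$ on $\cC_{\rm h}$; since $\tilde\varphi$ vanishes in a neighborhood of the origin, one may write
\[
\tilde\varphi(\xi_{\rm h})=\sum_{|\al|=N}\xi_{\rm h}^\al\, g_\al(\xi_{\rm h}), \qquad g_\al\in\cS(\R^2)\ \text{compactly supported},
\]
obtained by dividing by $|\xi_{\rm h}|^{2N}$ on the ring and exploiting the polynomial identity $|\xi_{\rm h}|^{2N}=\sum_{|\al|=N}\binom{N}{\al}\xi_{\rm h}^{2\al}$. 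Substituting into the identity $\wh a(\xi)=\tilde\varphi(2^{-k}\xi_{\rm h})\wh a(\xi)$ expresses $a$ as $2^{-kN}$ times a finite sum of horizontal convolutions of $\pa_{x_{\rm h}}^\al a$ against rescaled kernels whose $L^1_{\rm h}$ norms are scale-invariant; the Young plus Minkowski combination used above then delivers the bound. The fourth inequality is the one-dimensional vertical analogue.

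The main — in fact the only — obstacle is handling the mixed norm $L^{p_1}_{\rm h}(L^{q_1}_{\rm v})$, and it is dissolved in every case by the crucial observation that each Fourier localization engages only one of the two variable groups; once this is noticed, Minkowski's integral inequality cleanly separates the two norms and the remainder is a routine one-group Young-inequality-plus-scaling computation.
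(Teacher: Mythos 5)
The paper does not actually prove this lemma: it is recalled from \cite{CZ1, Pa02} without proof, so there is no argument in the paper to compare against. Your proof is correct and is the standard one: dilate a test function equal to $1$ on the relevant ball (respectively supported in a slightly larger ring and equal to $1$ on the ring, with a division by $|\xi_{\rm h}|^{2N}$ to invert the derivatives), express the quantity to be bounded as a convolution acting in a single group of variables, and combine Minkowski's integral inequality with Young's inequality in that group. One small remark: the vertical cases are slightly \emph{simpler} rather than ``entirely analogous with horizontal and vertical roles swapped.'' When the convolution acts in $x_3$, the inner norm $L^{q_1}_{\rm v}$ is precisely the one Young's inequality controls directly, so the outer $L^{p_1}_{\rm h}$ norm is then taken of a pointwise-in-$x_{\rm h}$ inequality and Minkowski is not needed; it is only in the horizontal cases, where the convolution lives in the outer variable, that Minkowski is required to push the inner $L^{q_1}_{\rm v}$ norm through the integral before applying Young horizontally.
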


\begin{lem}\label{lem3.1}
{\sl Let $\s_1<\s<\s_2$ and $s_2<s<s_1$ with
$\s_1+s_1=\s+s=\s_2+s_2.$ Then one has \beq \label{k.1}
\|g_\Phi\|_{B^{\s,s}}\lesssim
\|g_\Phi\|_{B^{\s_1,s_1}}+\|g_\Phi\|_{B^{\s_2,s_2}}. \eeq} \end{lem}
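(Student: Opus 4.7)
The plan is to observe that this is essentially a convex interpolation at the level of the discrete dyadic indices $(k,\ell)\in\Z^2$, and that the phase $\Phi$ plays no role whatsoever: we simply compare three anisotropic Besov norms of the \emph{same} function $g_\Phi$, so the argument reduces to manipulating positive sums.

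Set $a_{k,\ell}\eqdefa \|\Delta_k^{\rm h}\Delta_\ell^{\rm v} g_\Phi\|_{L^2}\geq 0$, so that by Definition \ref{def2.1} we have
\beno
\|g_\Phi\|_{B^{\sigma,s}}=\sum_{k,\ell\in\Z}2^{k\sigma+\ell s}a_{k,\ell},
\eeno
and similarly for $(\sigma_1,s_1)$ and $(\sigma_2,s_2)$. The hypothesis $\sigma_1+s_1=\sigma+s=\sigma_2+s_2$ together with $\sigma_1<\sigma<\sigma_2$ and $s_2<s<s_1$ tells us that the common parameters
\beno
\alpha\eqdefa \sigma-\sigma_1=s_1-s>0\andf \beta\eqdefa \sigma_2-\sigma=s-s_2>0
\eeno
are strictly positive. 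Then a direct computation gives the two termwise identities
\beno
2^{k\sigma+\ell s}=2^{k\sigma_1+\ell s_1}\,2^{\alpha(k-\ell)}=2^{k\sigma_2+\ell s_2}\,2^{\beta(\ell-k)}.
\eeno

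Next I would split $\Z^2$ into the two half-planes $\{k\leq\ell\}$ and $\{k>\ell\}$. On the first region, $2^{\alpha(k-\ell)}\leq 1$, so $2^{k\sigma+\ell s}a_{k,\ell}\leq 2^{k\sigma_1+\ell s_1}a_{k,\ell}$; on the second, $2^{\beta(\ell-k)}\leq 1$, so $2^{k\sigma+\ell s}a_{k,\ell}\leq 2^{k\sigma_2+\ell s_2}a_{k,\ell}$. Summing and extending each of the two partial sums to all of $\Z^2$ (which is legitimate since the summands are non-negative) yields
\beno
\sum_{k,\ell\in\Z}2^{k\sigma+\ell s}a_{k,\ell}\leq \sum_{k\leq\ell}2^{k\sigma_1+\ell s_1}a_{k,\ell}+\sum_{k>\ell}2^{k\sigma_2+\ell s_2}a_{k,\ell}\leq \|g_\Phi\|_{B^{\sigma_1,s_1}}+\|g_\Phi\|_{B^{\sigma_2,s_2}},
\eeno
which is exactly \eqref{k.1}.

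There is no real obstacle in this argument: the subadditivity property \eqref{subaddi} of $\Phi$ is not needed (it will be used elsewhere, in the product estimates of this section), and no anisotropic Bernstein inequality from Lemma \ref{lem:Berstein} is invoked. The only substantive point is the bookkeeping identifying $\sigma+s$ as the common value shared by the three indices, which is precisely the condition ensuring that $(\sigma,s)$ lies on the segment between $(\sigma_1,s_1)$ and $(\sigma_2,s_2)$ along the anti-diagonal direction, the direction along which the anisotropic scaling of the dyadic weights trades $k$ against $\ell$.
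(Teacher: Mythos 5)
Your argument is correct and follows essentially the same route as the paper: both split the sum over $\Z^2$ into the half-planes $k\lessgtr\ell$ and exploit that, on the common anti-diagonal $\sigma+s=\sigma_1+s_1=\sigma_2+s_2$, the dyadic weight is monotone in $k-\ell$. Your presentation in terms of the parameters $\alpha,\beta$ and termwise identities is a slightly cleaner bookkeeping of the same inequality, but there is no mathematical difference.
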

\begin{proof} According to Definition \ref{def2.1} and \eqref{k.2},
one has \beno \|g_\Phi\|_{B^{\s,s}}=\sum_{k<\ell}2^{k\s}2^{\ell
s}\|\D_k^{\rm h}\D_\ell^{\rm v}g_\Phi\|_{L^2}+\sum_{k\geq
\ell}2^{k\s}2^{\ell s}\|\D_k^{\rm h}\D_\ell^{\rm v}g_\Phi\|_{L^2}.
\eeno However, it is easy to observe that  \beno
\begin{split} \sum_{k<\ell}2^{k\s}2^{\ell s}\|\D_k^{\rm h}\D_\ell^{\rm
v}g_\Phi\|_{L^2}\lesssim
&\sum_{k<\ell}d_{k,\ell}2^{k(\s-\s_1)}2^{\ell(s-s_1)}\|g_\Phi\|_{B^{\s_1,s_1}}\\
\lesssim&
\sum_{k<\ell}d_{k,\ell}2^{\ell(\s+s-\s_1-s_1)}\|g_\Phi\|_{B^{\s_1,s_1}}\lesssim
\|g_\Phi\|_{B^{\s_1,s_1}} \end{split} \eeno  where we used the fact
that $\s+s=\s_1+s_1$ and that $\s>\s_1$ so that $2^{k(\s-\s_1)}\leq
2^{\ell(\s-\s_1)}.$

Along the same line, we have \beno \sum_{k\geq \ell}2^{k\s}2^{\ell
s}\|\D_k^{\rm h}\D_\ell^{\rm v}g_\Phi\|_{L^2}\lesssim
\|g_\Phi\|_{B^{\s_2,s_2}}. \eeno This completes the proof of
\eqref{k.1}.
\end{proof}

To study the law of product in the anisotropic Besov spaces, we need
to use Bony's decomposition. We first recall the isotropic
para-differential decomposition from \cite{Bo} that: let $a$ and $b$
be in~$ \cS_h'(\R^3)$, \beq \label{pd}\begin{split} &
ab=T(a,b)+ R(a,b)+\bar{T}(a,b)\andf ab =T(a,b)+\cR(a,b)\with\\
& T(a,b)=\sum_{j\in\Z}S_{j-1}a\Delta_j b, \quad
\bar{T}(a,b)=T(b,a),\quad \cR(a,b)=\sum_{j\in\Z}\D_j aS_{j+2}b \andf\\
&R(a,b)=\sum_{j\in\Z}\Delta_j
a\tilde{\Delta}_{j}b,\quad\hbox{with}\quad
\tilde{\Delta}_{j}b=\sum_{j'=j-1}^{j+1}\D_{j'} b.
\end{split} \eeq Sometimes we shall use Bony's decomposition for
both horizontal and vertical variables simultaneously.

\medbreak As an application of the above basic facts on
Littlewood-paley theory, we now present the following law of product
in the anisotropic Besov spaces:

\begin{lem}\label{lem3.2}
{\sl Let $\s_1,\cdots,\s_8$ and $s_1,\cdots, s_8$ be real numbers so
that \beno
\begin{split}
&\s_1+\s_2=\s_3+\s_4=\s_5+\s_6=\s_7+\s_8>0
 \with \s_1,\s_4,\s_5,\s_8\leq 1 \andf\\
&s_1+s_2=s_3+s_4=s_5+s_6=s_7+s_8>0\ \ \ \with s_1,s_4,s_6, s_7\leq
\f12.
\end{split}
\eeno Then there holds \beq\label{k.3}
\begin{split}
\|[ab]_\Phi\|_{B^{\s_1+\s_2-1,s_1+s_2-\f12}}\lesssim &
\|a_\Phi\|_{B^{\s_1,s_1}}\|b_\Phi\|_{B^{\s_2,s_2}}+\|a_\Phi\|_{B^{\s_3,s_3}}\|b_\Phi\|_{B^{\s_4,s_4}}\\
&+
\|a_\Phi\|_{B^{\s_5,s_5}}\|b_\Phi\|_{B^{\s_6,s_6}}+\|a_\Phi\|_{B^{\s_7,s_7}}\|b_\Phi\|_{B^{\s_8,s_8}}.
\end{split} \eeq } \end{lem}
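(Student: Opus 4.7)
Plan: The proof has two main ingredients: (i) absorbing the analytic weight $e^{\Phi}$ using the subadditivity \eqref{subaddi}, thereby reducing to an ordinary product estimate; and (ii) a double anisotropic Bony decomposition whose nine resulting pieces are then matched to the four couples of Besov norms appearing on the right-hand side of \eqref{k.3}.

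\textbf{Removing the phase.} Combining $\widehat{ab}=\widehat a\ast\widehat b$ with \eqref{subaddi},
\[
\bigl|e^{\Phi(t,\xi)}\widehat{ab}(\xi)\bigr|\le\int e^{\Phi(t,\xi-\eta)+\Phi(t,\eta)}|\widehat a(\xi-\eta)||\widehat b(\eta)|\,d\eta=\widehat{a_\Phi^+\,b_\Phi^+}(\xi).
\]
Since the anisotropic Besov norm depends only on $|\widehat{\Delta_k^h\Delta_\ell^v\cdot}|$ via Plancherel and the $+$ operation preserves it, one obtains $\|[ab]_\Phi\|_{B^{\s,s}}\le\|a_\Phi^+b_\Phi^+\|_{B^{\s,s}}$ and $\|a_\Phi^+\|_{B^{\s_i,s_i}}=\|a_\Phi\|_{B^{\s_i,s_i}}$. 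Thus it suffices to prove \eqref{k.3} as a plain product estimate for $a_\Phi^+\,b_\Phi^+$.

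\textbf{Double Bony and Bernstein.} Applying \eqref{pd} in both the horizontal and vertical variables to $a_\Phi^+\,b_\Phi^+$ produces nine pieces $PQ(a_\Phi^+,b_\Phi^+)$ with $P\in\{T^h,\bar T^h,R^h\},\ Q\in\{T^v,\bar T^v,R^v\}$. For the representative pure paraproduct piece $T^hT^v(a_\Phi^+,b_\Phi^+)=\sum_{k,\ell}(S_{k-1}^hS_{\ell-1}^v a_\Phi^+)\,\Delta_k^h\Delta_\ell^v b_\Phi^+$, Lemma \ref{lem:Berstein} yields
\[
\|S_{k-1}^hS_{\ell-1}^v a_\Phi^+\|_{L^\infty_{\rm h}(L^\infty_{\rm v})}\lesssim \widetilde d_{k,\ell}\,2^{k(1-\s_1)}2^{\ell(\f12-s_1)}\|a_\Phi\|_{B^{\s_1,s_1}},
\]
valid precisely because $\s_1\le 1$ and $s_1\le\f12$; standard summation in $k,\ell$ then bounds this piece by $\|a_\Phi\|_{B^{\s_1,s_1}}\|b_\Phi\|_{B^{\s_2,s_2}}$ at the target index $(\s_1+\s_2-1,s_1+s_2-\f12)$. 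The three other pure paraproduct pieces $\bar T^h\bar T^v$, $T^h\bar T^v$, $\bar T^h T^v$ are treated identically, the low-frequency factor being controlled respectively by $(\s_4,s_4)$, $(\s_5,s_6)$, and $(s_7,\s_8)$, i.e. by the hypotheses $\s_4\le 1,\,s_4\le\f12$; $\s_5\le 1,\,s_6\le\f12$; and $s_7\le\f12,\,\s_8\le 1$. Each of the remaining five pieces involves at least one remainder and only requires positivity of the pertinent sum $\s_i+\s_j$ or $s_i+s_j$, which is provided by the hypotheses; such a piece is allocated to whichever of the four right-hand-side couples satisfies its upper constraints (for instance $T^hR^v$ fits the first couple since $\s_1\le 1$ and $s_1+s_2>0$; $R^h\bar T^v$ fits the second since $s_4\le\f12$ and $\s_3+\s_4>0$; and so on).

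\textbf{Main obstacle.} The technical heart is the bookkeeping in the previous step: the four equivalent writings $\s_1+\s_2-1=\s_3+\s_4-1=\s_5+\s_6-1=\s_7+\s_8-1$ and their vertical counterparts are precisely what allow one to assign each of the nine Bony pieces to a right-hand-side couple whose upper bounds $\s_i\le 1$ or $s_i\le\f12$ control the low-frequency factor in the corresponding paraproduct; without any one of these bounds the geometric series $\sum_{k'<k}2^{k'(1-\s_i)}d_{k',\ell'}$ (or its vertical analogue) arising from Bernstein would fail to sum, and the absence of $\s_i+\s_j>0,\,s_i+s_j>0$ would similarly destroy the remainder estimates.
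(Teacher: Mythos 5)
Your proof is correct and follows essentially the same strategy as the paper's: first reduce to $\Phi=0$ by the subadditivity \eqref{subaddi} together with the observation that $f\mapsto f^+$ preserves the anisotropic Besov norms, then perform the double Bony decomposition and allocate the nine pieces to the four index pairs according to which upper constraints ($\s_i\le 1$, $s_j\le\f12$) are available. The minor bookkeeping differences (for instance, assigning $R^{\rm h}\bar T^{\rm v}$ to the pair $(\s_3,s_3),(\s_4,s_4)$ rather than, as the paper does, to $(\s_5,s_5),(\s_6,s_6)$) are immaterial, since the required hypotheses hold for either choice.
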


\begin{proof} We first observe from \eqref{subaddi} that
 \beq \label{k.3ad} \big|\cF\bigl[\Delta_k^{\rm
h}\Delta_\ell^{\rm v}(ab)\bigr]_\Phi(\xi)\big|\le
\cF\bigl[\Delta_k^{\rm h}\Delta_\ell^{\rm v}(a^+_\Phi
b^+_\Phi)\bigr](\xi). \eeq Hence it suffices to prove \eqref{k.3}
for $\Phi=0.$

Indeed we get, by applying Bony's decomposition \eqref{pd} for both
horizontal and vertical variables, that \beno ab=\bigl(T^{\rm
h}+R^{\rm h}+\bar{T}^{\rm h}\bigr)\bigl(T^{\rm v}+R^{\rm
v}+\bar{T}^{\rm v}\bigr)(a,b). \eeno Considering the support to the
Fourier transform of the terms in $R^{\rm h}R^{\rm v}(a,b),$ by
applying Lemma \ref{lem:Berstein}, we obtain \beno
\begin{split}
\|\D_k^{\rm h}\D_\ell^{\rm v}R^{\rm h}R^{\rm v}(a,b)\|_{L^2}\lesssim
&2^k2^{\f{\ell}2}\sum_{\substack{k'\geq k-3\\ \ell'\geq
\ell-3}}\|\D_{k'}^{\rm h}\D_{\ell'}^{\rm
v}a\|_{L^2}\|\wt{\D}_{k'}^{\rm
h}\wt{\D}_{\ell'}^{\rm v}b\|_{L^2}\\
\lesssim &2^k2^{\f{\ell}2}\sum_{\substack{k'\geq k-3\\ \ell'\geq
\ell-3}}d_{k',\ell'}2^{-k'(\s_1+\s_2)}2^{-\ell'(s_1+s_2)}\|a\|_{B^{\s_1,s_1}}\|b\|_{B^{\s_2,s_2}}\\
\lesssim
&d_{k,\ell}2^{-k(\s_1+\s_2-1)}2^{-\ell\bigl(s_1+s_2-\f12\bigr)}\|a\|_{B^{\s_1,s_1}}\|b\|_{B^{\s_2,s_2}}.
\end{split}
\eeno The same estimate holds for $T^{\rm h}T^{\rm v}(a,b),$ $T^{\rm
h}R^{\rm v}(a,b),$ and $R^{\rm h}T^{\rm v}(a,b).$

While since $\s_5\leq 1$ and $s_6\leq \f12,$ it follows from Lemma
\ref{lem:Berstein} that \beno &&\|S_{k'-1}^{\rm h}\D_{\ell'}^{\rm
v}a\|_{L^\infty_{\rm h}(L^2_{\rm v})}\lesssim
d_{\ell'}2^{k'(1-\s_5)}2^{-\ell's_5}\|a\|_{B^{\s_5,s_5}}\andf\\
&&\|\D_{k'}^{\rm h}S_{\ell'-1}^{\rm v}b\|_{L^2_{\rm h}(L^\infty_{\rm
v})}\lesssim
d_{k'}2^{-k'\s_6}2^{\ell'\bigl(\f12-s_6\bigr)}\|b\|_{B^{\s_6,s_6}},
\eeno from which, we infer \beno
\begin{split}
\|\D_k^{\rm h}\D_\ell^{\rm v}T^{\rm h}\bar{T}^{\rm
v}(a,b)\|_{L^2}\lesssim &\sum_{\substack{|k'- k|\leq 4\\
|\ell'-\ell|\leq 4}}\|S_{k'-1}^{\rm h}\D_{\ell'}^{\rm
v}a\|_{L^\infty_{\rm h}(L^2_{\rm v})}\|{\D}_{k'}^{\rm
h}S_{\ell'-1}^{\rm v}b\|_{L^2_{\rm h}(L^\infty_{\rm v})}\\
\lesssim
&d_{k,\ell}2^{-k(\s_5+\s_6-1)}2^{-\ell\bigl(s_5+s_6-\f12\bigr)}\|a\|_{B^{\s_5,s_5}}\|b\|_{B^{\s_6,s_6}}.
\end{split}
\eeno The same estimate holds for $R^{\rm h}\bar{T}^{\rm v}(a,b).$

By the same manner, we obtain \beno
\begin{split}
\|\D_k^{\rm h}\D_\ell^{\rm v}\bar{T}^{\rm h}{T}^{\rm
v}(a,b)\|_{L^2}\lesssim &\sum_{\substack{|k'- k|\leq 4\\
|\ell'-\ell|\leq 4}}\|\D_{k'}^{\rm h}S_{\ell'-1}^{\rm
v}a\|_{L^2_{\rm h}(L^\infty_{\rm v})}\|{S}_{k'-1}^{\rm
h}\D_{\ell'}^{\rm v}b\|_{L^\infty_{\rm h}(L^2_{\rm v})}\\
\lesssim
&d_{k,\ell}2^{-k(\s_7+\s_8-1)}2^{-\ell\bigl(s_7+s_8-\f12\bigr)}\|a\|_{B^{\s_7,s_7}}\|b\|_{B^{\s_8,s_8}}.
\end{split}
\eeno The same estimate holds for $\bar{T}^{\rm h}{R}^{\rm v}(a,b).$

Finally since $\s_4\leq 1, s_4\leq \f12,$ applying Lemma
\ref{lem:Berstein} yields \beno \|S_{k'-1}^{\rm h}S_{\ell'-1}^{\rm
v}b\|_{L^\infty}\lesssim
2^{k'(1-\s_4)}2^{\ell'\bigl(\f12-s_4\bigr)}\|b\|_{B^{\s_4,s_4}},
\eeno which ensures \beno
\begin{split}
\|\D_k^{\rm h}\D_\ell^{\rm v}\bar{T}^{\rm h}\bar{T}^{\rm
v}(a,b)\|_{L^2}\lesssim &\sum_{\substack{|k'- k|\leq 4\\
|\ell'-\ell|\leq 4}}\|\D_{k'}^{\rm h}\D_{\ell'}^{\rm
v}a\|_{L^2}\|{S}_{k'-1}^{\rm
h}S_{\ell'-1}^{\rm v}b\|_{L^\infty}\\
\lesssim
&d_{k,\ell}2^{-k(\s_3+\s_4-1)}2^{-\ell\bigl(s_3+s_4-\f12\bigr)}\|a\|_{B^{\s_3,s_3}}\|b\|_{B^{\s_4,s_4}}.
\end{split}
\eeno This completes the proof of \eqref{k.3}.
\end{proof}

We remark that the law of product of Lemma \ref{lem3.2} works also
for Chemin-Lerner type spaces $\widetilde{L}^p_T(B^{s_1,s_2}).$
Indeed the proof of Lemma \ref{lem3.2} implies the following
corollary:

\begin{col}\label{col3.1}
{\sl Let $p,p_1,p_2, p_3, p_4\in [1,\infty]$ with
$\f1p=\f1{p_1}+\f1{p_2}=\f1{p_3}+\f1{p_4}.$ Then under the
assumptions that if $\sigma_1,\sigma_2,\s_3,\s_4\le 1$ and $s_1,s_2,
s_3,s_4$ satisfy $0<\sigma_1+\sigma_2=\s_3+\s_4,$ $s_1,s_4\le \f12$
and $0<s_1+s_2=s_3+s_4,$  or if $\sigma_1,\sigma_2,\s_3,\s_4$ and
$s_1,s_2, s_3,s_4\leq \f12$ satisfy $\s_1,\s_4\leq 1,$
$0<\sigma_1+\sigma_2=\s_3+\s_4,$ and  $0<s_1+s_2=s_3+s_4,$   one has
\beq\label{k.5} \|[ab]_\Phi\|_{\wt{L}^p_T(B^{\sigma_1+\sigma_2-1,
s_1+s_2-\f12})}\lesssim
\|a_\Phi\|_{\widetilde{L}^{p_1}_T(B^{\sigma_1,s_1})}
\|b_\Phi\|_{\widetilde{L}^{p_2}_T(B^{\sigma_2,s_2})}+\|a_\Phi\|_{\widetilde{L}^{p_3}_T(B^{\sigma_3,s_3})}
\|b_\Phi\|_{\widetilde{L}^{p_4}_T(B^{\sigma_4,s_4})}. \eeq In the
particular case when $\s_1,\s_2\leq 1$ with $\s_1+\s_2>0$ and
$s_1,s_2\leq \f12$ with $s_1+s_2>0,$ one has
  \beq\label{k.6}
\|[ab]_\Phi\|_{\wt{L}^p_T(B^{\sigma_1+\sigma_2-1,
s_1+s_2-\f12})}\lesssim
\|a_\Phi\|_{\widetilde{L}^{p_1}_T(B^{\sigma_1,s_1})}
\|b_\Phi\|_{\widetilde{L}^{p_2}_T(B^{\sigma_2,s_2})}. \eeq }
\end{col}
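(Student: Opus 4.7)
The plan is to follow the same Bony-decomposition strategy employed in Lemma \ref{lem3.2}, upgraded with H\"older's inequality in time so as to pass from pointwise-in-$t$ estimates to Chemin-Lerner norms. First, by \eqref{k.3ad} it suffices to treat the case $\Phi=0$, since the map $f\mapsto f^+$ preserves all the $B^{s_1,s_2}$ norms and hence their $\wt L^p_T$-variants as well. Then, as in the proof of Lemma \ref{lem3.2}, we apply Bony's decomposition \eqref{pd} simultaneously in the horizontal and vertical variables, writing
\beno
ab=(T^{\rm h}+R^{\rm h}+\bar T^{\rm h})(T^{\rm v}+R^{\rm v}+\bar T^{\rm v})(a,b),
\eeno
so that $ab$ is a sum of nine paraproduct/remainder pieces.

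For each of the nine pieces, the computation done in Lemma \ref{lem3.2} yields a bound of the form
\beno
\|\D_k^{\rm h}\D_\ell^{\rm v}[ab](t)\|_{L^2}\lesssim d_{k,\ell}(t)\,2^{-k(\s+\s'-1)}2^{-\ell(s+s'-\f12)}\|a(t)\|_{B^{\s,s}}\|b(t)\|_{B^{\s',s'}},
\eeno
where $(\s,s)$ and $(\s',s')$ are chosen from the RHS of \eqref{k.5} according to which Bony piece is being treated. Since all the $L^2$-estimates issued by Lemma \ref{lem:Berstein} are pointwise in time, applying H\"older's inequality with $\frac1p=\frac1{p_1}+\frac1{p_2}$ (resp.\ $\frac1{p_3}+\frac1{p_4}$) converts each of these into
\beno
\|\D_k^{\rm h}\D_\ell^{\rm v}[ab]\|_{L^p_T(L^2)}\lesssim d_{k,\ell}\,2^{-k(\s+\s'-1)}2^{-\ell(s+s'-\f12)}\|a\|_{\wt L^{p_i}_T(B^{\s,s})}\|b\|_{\wt L^{p_j}_T(B^{\s',s'})},
\eeno
with $\{d_{k,\ell}\}\in\ell^1(\Z^2)$. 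Summing after multiplication by $2^{k(\s_1+\s_2-1)}2^{\ell(s_1+s_2-\f12)}$ produces the Chemin-Lerner norm on the left of \eqref{k.5}.

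The remaining issue, which is the main bookkeeping obstacle, is to match each of the nine Bony pieces to one of the two product terms on the right of \eqref{k.5} in a manner compatible with the index restrictions. The pieces $R^{\rm h}R^{\rm v}$, $T^{\rm h}T^{\rm v}$, $T^{\rm h}R^{\rm v}$, $R^{\rm h}T^{\rm v}$ require no $L^\infty$-type embedding and are estimated using $(\s_1,s_1),(\s_2,s_2)$, with only the positivity $\s_1+\s_2>0$, $s_1+s_2>0$ needed to sum the geometric series in $(k,\ell)$. The mixed pieces $T^{\rm h}\bar T^{\rm v}$ and $R^{\rm h}\bar T^{\rm v}$ demand an $L^\infty_{\rm h}(L^2_{\rm v})$ bound on $S^{\rm h}_{k'-1}\D^{\rm v}_{\ell'}a$ and an $L^2_{\rm h}(L^\infty_{\rm v})$ bound on $\D^{\rm h}_{k'}S^{\rm v}_{\ell'-1}b$, which via Lemma \ref{lem:Berstein} require $\s_1\le1$ and $s_2\le\f12$; the symmetric pieces $\bar T^{\rm h}T^{\rm v}$ and $\bar T^{\rm h}R^{\rm v}$ produce the dual restrictions $\s_2\le1$ and $s_1\le\f12$; finally $\bar T^{\rm h}\bar T^{\rm v}$ needs an $L^\infty$ bound on $S^{\rm h}_{k'-1}S^{\rm v}_{\ell'-1}b$, hence simultaneously $\s_4\le1$ and $s_4\le\f12$. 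Distributing the pieces whose index constraints are incompatible between $(\s_1,s_1),(\s_2,s_2)$ onto the second pair $(\s_3,s_3),(\s_4,s_4)$ produces precisely the two sets of admissible assumptions stated in the corollary. In particular, in the setting of \eqref{k.6} where $\s_1,\s_2\le1$ and $s_1,s_2\le\f12$ hold simultaneously, one may take $(\s_3,s_3)=(\s_1,s_1)$ and $(\s_4,s_4)=(\s_2,s_2)$ and absorb the second term into the first, recovering the single-term inequality.
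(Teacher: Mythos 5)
Your proposal is correct and follows essentially the same route the paper takes: the paper simply asserts that the proof of Lemma \ref{lem3.2} implies the corollary, and you have supplied the two missing observations, namely (i) that each of the nine Bony pieces yields a pointwise-in-$t$ dyadic estimate to which H\"older in time can be applied before summing into the Chemin--Lerner norm, and (ii) that the two admissible index sets in the corollary are exactly what is needed to distribute the pieces requiring $\s\le1$ or $s\le\f12$ across the two product pairs. Your bookkeeping of which pieces use $(\s_1,s_1),(\s_2,s_2)$ versus $(\s_3,s_3),(\s_4,s_4)$ in each of the two cases is accurate, and the specialization to \eqref{k.6} by identifying the two pairs is the right reduction.
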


\begin{Remark}\label{rem:product}
Let us remark that if $\sigma_1,\sigma_2\le 1, \sigma_1+\sigma_2>0$
and $s_1\le \f12$, the proof of Lemma \ref{lem3.2} also implies
\beno \big\|\bigl[\Delta_k^{\rm h}\Delta_\ell^{\rm v}T^{\rm
v}(a,b)\bigr]_\Phi(t)\big\|_{L^2}\le
C(d_{k}(t)d_\ell+d_{k,\ell})2^{k(1-\sigma_1-\sigma_2)}2^{\ell(\f12-s_1-s_2)}\|a_\Phi(t)\|_{B^{\sigma_1,s_1}}
\|b_\Phi\|_{\widetilde{L}^\infty_t({B}^{\sigma_2,s_2})}. \eeno
\end{Remark}

\begin{lem}\label{lem:composition}
{\sl Let $\s, s>0$ and $\s\leq 1$ or $s\leq \f12;$  let $G(r)=\f r
{1+r}.$ Then there exists $\epsilon>0$ such that if \beq\label{k.50}
 \|a_\Phi\|_{\widetilde{L}^\infty_T(B^{1,\f12})}\le
\epsilon, \eeq there holds  \beno
\|[G(a)]_\Phi\|_{\widetilde{L}^\infty_T(B^{\s,s})}\le
2\|a_\Phi\|_{\wt{L}^\infty(B^{\s,s})}. \eeno}
\end{lem}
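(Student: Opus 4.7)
\noindent\textbf{Proof proposal for Lemma \ref{lem:composition}.} The plan is to Taylor-expand $G$ about the origin and control each monomial via the product law of Corollary~\ref{col3.1}. Since $G(r)=r/(1+r)=\sum_{n\geq 1}(-1)^{n-1}r^n$ converges absolutely for $|r|<1$, we formally have $[G(a)]_\Phi=\sum_{n\geq 1}(-1)^{n-1}[a^n]_\Phi$, and the task reduces to bounding each term $\|[a^n]_\Phi\|_{\widetilde{L}^\infty_T(B^{\s,s})}$ by a summable sequence.

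First I would iterate the subadditivity \eqref{subaddi}: for $\xi=\xi_1+\cdots+\xi_n$ one has $\Phi(\xi)\le\sum_j\Phi(\xi_j)$, which together with $|\widehat{a^n}|\leq |\widehat a|^{\ast n}$ and exactly the same computation that produced \eqref{k.3ad} gives
\[
\bigl|\cF[a^n]_\Phi(\xi)\bigr|\le \cF\bigl[(a^+_\Phi)^n\bigr](\xi).
\]
Combined with the fact that $f\mapsto f^+$ preserves the $B^{\s,s}$-norm, this yields $\|[a^n]_\Phi\|_{\widetilde{L}^\infty_T(B^{\s,s})}\le\|(a^+_\Phi)^n\|_{\widetilde{L}^\infty_T(B^{\s,s})}$, and all subsequent estimates can be done on the nonnegatively-Fourier-transformed function $a^+_\Phi$.

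Next I would verify that the hypothesis ``$\s\leq 1$ or $s\leq\f12$'' (together with $\s,s>0$) allows one to apply Corollary~\ref{col3.1} with the choice $(\s_1,\s_2,\s_3,\s_4,s_1,s_2,s_3,s_4)=(1,\s,\s,1,\f12,s,s,\f12)$ to obtain the ``tame'' estimate
\[
\|fg\|_{\widetilde L^\infty_T(B^{\s,s})}\le C\Bigl(\|f\|_{\widetilde L^\infty_T(B^{1,\f12})}\|g\|_{\widetilde L^\infty_T(B^{\s,s})}+\|f\|_{\widetilde L^\infty_T(B^{\s,s})}\|g\|_{\widetilde L^\infty_T(B^{1,\f12})}\Bigr),
\]
splitting into three sub-cases (both conditions; only $\s\le 1$; only $s\le \f12$) to see that either the first or the second alternative of Corollary~\ref{col3.1} applies each time. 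In the special case $(\s,s)=(1,\f12)$ the simpler inequality \eqref{k.6} gives $\|fg\|_{\widetilde L^\infty_T(B^{1,\f12})}\le C\|f\|_{\widetilde L^\infty_T(B^{1,\f12})}\|g\|_{\widetilde L^\infty_T(B^{1,\f12})}$. Setting $M\eqdef\|a_\Phi\|_{\widetilde L^\infty_T(B^{1,\f12})}$ and $N\eqdef\|a_\Phi\|_{\widetilde L^\infty_T(B^{\s,s})}$, a straightforward induction on $n$ built on these two inequalities, applied to $(a^+_\Phi)^n=a^+_\Phi\cdot(a^+_\Phi)^{n-1}$, then gives
\[
\|(a^+_\Phi)^n\|_{\widetilde L^\infty_T(B^{1,\f12})}\le (CM)^{n-1}M,\qquad \|(a^+_\Phi)^n\|_{\widetilde L^\infty_T(B^{\s,s})}\le n\,(CM)^{n-1}N,
\]
where the linear-in-$n$ factor arises because each inductive step picks up one copy of $N$ times $(CM)^{n-2}M$ from the ``$\|\cdot\|_{B^{\s,s}}\|\cdot\|_{B^{1,\f12}}$'' term.

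The final step is to sum the series: choosing $\epsilon$ so small that $CM\le C\epsilon\le 1-\tfrac1{\sqrt 2}$, we obtain
\[
\|[G(a)]_\Phi\|_{\widetilde L^\infty_T(B^{\s,s})}\le N\sum_{n\ge 1}n\,(C\epsilon)^{n-1}=\frac{N}{(1-C\epsilon)^2}\le 2N,
\]
which is the desired bound. The only genuine obstacle is the case-analysis needed to certify that the tame product estimate in $B^{\s,s}$ really is a corollary of Corollary~\ref{col3.1} across all sub-cases of the hypothesis; once that is done, the induction and the geometric summation are routine.
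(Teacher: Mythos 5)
Your proposal is correct and follows essentially the same route as the paper: Taylor-expand $G$, establish the tame product estimate in $B^{\sigma,s}$ from Corollary~\ref{col3.1} (the paper's \eqref{k.9q}), prove $\|[a^k]_\Phi\|_{\widetilde{L}^\infty_T(B^{\sigma,s})}\lesssim k(CM)^{k-1}N$ by induction, and sum. Your case analysis confirming that the choice $(\sigma_1,\sigma_2,\sigma_3,\sigma_4,s_1,\dots)=(1,\sigma,\sigma,1,\tfrac12,s,s,\tfrac12)$ fits one of the two alternatives of Corollary~\ref{col3.1} under ``$\sigma\le 1$ or $s\le\tfrac12$'' is a detail the paper leaves implicit, and the preliminary pass to $a^+_\Phi$ is optional since Corollary~\ref{col3.1} already incorporates the subadditive phase, but neither difference changes the substance of the argument.
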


\begin{proof} Indeed under the assumption of
Lemma \ref{lem:composition}, we deduce from Corollary \ref{col3.1}
that \beq\label{k.9q} \|[ab]_\Phi\|_{\wt{L}^\infty_T(B^{\s, s})}\leq
C\Bigl( \|a_\Phi\|_{\widetilde{L}^{\infty}_T(B^{1,\f12})}
\|b_\Phi\|_{\widetilde{L}^{\infty}_T(B^{\sigma,s})}+\|a_\Phi\|_{\widetilde{L}^{\infty}_T(B^{\sigma,s})}
\|b_\Phi\|_{\widetilde{L}^{\infty}_T(B^{1,\f12})}\Bigr). \eeq Thanks
to \eqref{k.9q}, one can inductively prove that \beno
\|[a^k]_\Phi\|_{\wt{L}^\infty_T(B^{\s, s})}\leq kC^k
\|a_\Phi\|_{\widetilde{L}^{\infty}_T(B^{1,\f12})}^{k-1}
\|a_\Phi\|_{\widetilde{L}^{\infty}_T(B^{\sigma,s})}. \eeno On the
other hand,  Taylor's expansion gives \beno
G(r)=\sum_{k=1}^\infty(-1)^{k-1}r^k\quad \textrm{for} \quad r\in
]-1,1[, \eeno from which and \eqref{k.50}, we infer
\begin{align*}
\|[G(a)]_\Phi\|_{\widetilde{L}^\infty_T(B^{\s,s})}&\le
\sum_{k=0}^\infty\|[a^{k+1}]_\Phi\|_{\wt{L}^\infty_T(B^{\s, s})} \\
&\le\|a_\Phi\|_{\widetilde{L}^{\infty}_T(B^{\sigma,s})}
\sum_{k=0}^\infty(k+1)(\epsilon C)^k\le
2\|a_\Phi\|_{\widetilde{L}^{\infty}_T(B^{\sigma,s})}
\end{align*} whenever $\epsilon$ is so small that $C\epsilon\leq
\delta_0$ for some $\delta_0$ sufficiently small.  This yields the
lemma.\end{proof}

\setcounter{equation}{0}

\section{The action of the phase $\Phi$ on the heat semigroup}

This section is devoted to studying the action of the Fourier
multiplier $e^{\Phi(t,D)}$ on the heat semigroup $e^{t\Delta_\ve}$
for the phase function $\Phi(t,\xi)$ given by (\ref{def:phase}). Let
us first present the classical parabolic smoothing estimates in the
Chemin-Lerner type space.

\begin{lemma}\label{lem:parabolic-home}
{\sl Let $\be\in [0,2], r\in [1,\infty]$ and $ \s,s\in \R$. Let
$v_0=(v_0^{\rm h}, v_0^3)$ be a divergence free vector filed. Then
one has \beq \label{g.1}\begin{split} & \ve^{\f \be
r}\bigl\|\big[e^{t\Delta_\ve}v_0\big]_\Phi\bigr\|_{\widetilde{L}^{r}_T(B^{\s,s})}
\lesssim \|e^{\delta|D|}v_0\|_{B^{\s-\f{2-\be}r,s-\f \be r}}\andf\\
& \ve^{\f \be
r}\bigl\|\big[e^{t\Delta_\ve}v_0^3\big]_\Phi\bigr\|_{\widetilde{L}^{r}_T(B^{\s,s})}
\lesssim \|e^{\delta|D|}v_0^{\rm h}\|_{B^{\s+1-\f{2-\be}r,s-1-\f \be
r}}. \end{split}\eeq }
\end{lemma}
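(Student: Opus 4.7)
The plan is to reduce \eqref{g.1} to a frequency-by-frequency Plancherel estimate and then recover the Chemin-Lerner norm by weighted summation.

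First, since $\theta(t)\ge 0$ and the statement is understood under the standing restriction $\theta(t)\le\delta/\lambda$, the phase obeys $0\le\Phi(t,\xi)=(\delta-\lambda\theta(t))|\xi|\le\delta|\xi|$, so on the Fourier side $e^{\Phi(t,\xi)}\le e^{\delta|\xi|}$. Combining this pointwise multiplier bound with the anisotropic heat symbol $e^{-t(|\xi_{\rm h}|^2+\ve^2\xi_3^2)}$ and the Fourier support of a block $\Delta_k^{\rm h}\Delta_\ell^{\rm v}$ (which forces $|\xi_{\rm h}|\sim 2^k$ and $|\xi_3|\sim 2^\ell$), Plancherel's theorem and the standard spectral cut-off argument give
\[
\|\Delta_k^{\rm h}\Delta_\ell^{\rm v}[e^{t\Delta_\ve}v_0]_\Phi(t)\|_{L^2}
\le C e^{-ct(2^{2k}+\ve^2 2^{2\ell})}\|\Delta_k^{\rm h}\Delta_\ell^{\rm v}e^{\delta|D|}v_0\|_{L^2}.
\]

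Next, taking the $L^r(0,T)$ norm in time costs at most $C(2^{2k}+\ve^2 2^{2\ell})^{-1/r}$. The key elementary step is the interpolation $a+b\ge a^{1-\beta/2}b^{\beta/2}$, valid for $a,b\ge 0$ and $\beta\in[0,2]$; applied to $a=2^{2k}$ and $b=\ve^2 2^{2\ell}$ this yields
\[
\ve^{\f\beta r}(2^{2k}+\ve^2 2^{2\ell})^{-\f1r}\lesssim 2^{-k\f{2-\beta}r}2^{-\ell\f\beta r}.
\]
Weighting by $2^{k\sigma}2^{\ell s}$ and summing over $k,\ell\in\Z$ then produces the first inequality of \eqref{g.1} directly from the definition \eqref{cheminl} of the Chemin-Lerner norm.

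For the second inequality I would use the divergence-free condition $\dive v_0=0$, which on the Fourier side reads $\xi_3\widehat{v_0^3}=-\xi_{\rm h}\cdot\widehat{v_0^{\rm h}}$. Combined with Lemma \ref{lem:Berstein} applied to a block $\Delta_k^{\rm h}\Delta_\ell^{\rm v}$, this yields
\[
\|\Delta_k^{\rm h}\Delta_\ell^{\rm v}e^{\delta|D|}v_0^3\|_{L^2}\lesssim 2^{k-\ell}\|\Delta_k^{\rm h}\Delta_\ell^{\rm v}e^{\delta|D|}v_0^{\rm h}\|_{L^2}.
\]
Substituting this into the block estimate from the first two steps shifts the target indices by $(\sigma,s)\mapsto(\sigma+1,s-1)$ on the right-hand side, which is precisely the second inequality of \eqref{g.1}.

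The only genuinely delicate point is the anisotropic interpolation that distributes the power $1/r$ between the horizontal scale $2^{2k}$ and the vertical scale $\ve^2 2^{2\ell}$ with the correct weights $(2-\beta)/r$ and $\beta/r$; everything else reduces to Plancherel plus Bernstein. As a sanity check, the boundary cases $\beta=0$ and $\beta=2$ recover the purely horizontal and purely vertical parabolic smoothing respectively.
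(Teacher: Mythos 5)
Your proof is correct and follows essentially the same route as the paper: bound the phase by $e^{\Phi(t,\xi)}\le e^{\delta|\xi|}$, use the spectral bound for $e^{t\Delta_\ve}$ on each dyadic block, take the $L^r_T$ norm in time to produce $\min(2^{-2k},\ve^{-2}2^{-2\ell})^{1/r}$, distribute the power of $1/r$ between the two scales via the Young-type interpolation with exponent $\be/2$, and invoke $\dive v_0=0$ together with Lemma~\ref{lem:Berstein} to trade $v_0^3$ for $v_0^{\rm h}$ at the cost of $2^{k-\ell}$. The only cosmetic difference is that you phrase the time integral via $(2^{2k}+\ve^22^{2\ell})^{-1/r}$ while the paper writes $\min(2^{-2k},\ve^{-2}2^{-2\ell})^{1/r}$; these are equivalent.
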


\begin{proof}\,By virtue of \eqref{k.2} and (\ref{def:phase}),
we get \beno \begin{split} \bigl\|\Delta_k^{\rm h}\Delta_\ell^{\rm
v}\big[e^{t\Delta_\ve}v_0\big]_\Phi\bigr\|_{L^2}\lesssim &
e^{-ct\bigl(2^{2k}+\ve^22^{2\ell}\bigr)}\|e^{\delta|D|}\Delta_k^{\rm
h}\Delta_\ell^{\rm v}v_0\|_{L^2}\\
\lesssim &
d_{k,\ell}2^{-k\bigl(\s-\f{2-\be}r\bigr)}2^{-\ell\bigl(s-\f\be
r\bigr)}e^{-ct\bigl(2^{2k}+\ve^22^{2\ell}\bigr)}\|e^{\delta|D|}v_0\|_{B^{\s-\f{2-\be}r,s-\f
\be r}}, \end{split} \eeno from which and  \beno
\bigl\|e^{-ct\bigl(2^{2k}+\ve^22^{2\ell}\bigr)}\bigr\|_{L^r_t}\le
C\min\bigl(2^{-2k},\ve^{-2}2^{-2\ell}\bigr)^\f1r, \eeno we deduce
\beno \ve^{\f \be r}\bigl\|\Delta_k^{\rm h}\Delta_\ell^{\rm
v}\big[e^{t\Delta_\ve}v_0\big]_\Phi\bigr\|_{L^r_t(L^2)}\lesssim
 d_{k,\ell}2^{-k\s}2^{-\ell s}\|e^{\delta|D|}v_0\|_{B^{\s-\f{2-\be}r,s-\f
\be r}},\eeno which leads to the first inequality of \eqref{g.1}.

Exactly by the same manner, since $\dive v_0=0,$ we get, applying
Lemma \ref{lem:Berstein}, that \beno \begin{split}
\bigl\|\Delta_k^{\rm h}\Delta_\ell^{\rm
v}\big[e^{t\Delta_\ve}v_0^3\big]_\Phi\bigr\|_{L^2}\lesssim &
2^{-\ell}e^{-ct\bigl(2^{2k}+\ve^22^{2\ell}\bigr)}\|e^{\delta|D|}\Delta_k^{\rm
h}\Delta_\ell^{\rm v}\dive_{\rm h}v_0^{\rm h}\|_{L^2}\\
\lesssim &
d_{k,\ell}2^{-k\bigl(\s-\f{2-\be}r\bigr)}2^{-\ell\bigl(s-\f\be
r\bigr)}e^{-ct\bigl(2^{2k}+\ve^22^{2j}\bigr)}\|e^{\delta|D|}v_0\|_{B^{\s+1-\f{2-\be}r,s-1-\f
\be r}}, \end{split} \eeno and whence
 \beno
\ve^{\f \be r}\bigl\|\Delta_k^{\rm h}\Delta_\ell^{\rm
v}\big[e^{t\Delta_\ve}v_0^3\big]_\Phi\bigr\|_{L^r_t(L^2)}\lesssim
 d_{k,\ell}2^{-k\s}2^{-\ell s}\|e^{\delta|D|}v_0\|_{B^{\s+1-\f{2-\be}r,s-1-\f
\be r}},\eeno which implies the second inequality of \eqref{g.1}.
This completes the proof of the lemma.
\end{proof}

In what follows, we denote \beq\label{g.2}
 E_\ve
f(t)\eqdefa\int_0^te^{(t-t')\Delta_\ve}f(t')\,dt'. \eeq

\begin{lemma}\label{lem:parabolic-inhome}
{\sl Let $\be\in [0,2], r_1, r_2\in [1,\infty]$ with $ r_2\le r_1,$
and $ \s,s\in \R$. Then there holds  \beq\label{g.3}
\ve^{\f\be{r}}\|[E_\ve f]_\Phi\|_{\widetilde{L}^{r_1}_T(B^{\s,s})}
\lesssim
\|f_\Phi\|_{\widetilde{L}^{r_2}_T(B^{\s-\f{2-\be}r,s-\f\be{r}})},
\eeq with $\f1r=1+\f 1 {r_1}-\f1 {r_2}$.}
\end{lemma}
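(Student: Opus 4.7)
The plan is to reduce the estimate to a frequency-localized convolution inequality in the time variable, the same spirit as the proof of Lemma~\ref{lem:parabolic-home}, but taking care of the Duhamel integral via Young's inequality.

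First I would take a dyadic block $\Delta_k^{\rm h}\Delta_\ell^{\rm v}[E_\ve f]_\Phi$ and write, on the Fourier side,
\[
\widehat{\Delta_k^{\rm h}\Delta_\ell^{\rm v}[E_\ve f]_\Phi}(t,\xi)
= \int_0^t \varphi_k(\xi_{\rm h})\varphi_\ell(\xi_3)\,
e^{\Phi(t,\xi)-\Phi(t',\xi)}\,
e^{-(t-t')(|\xi_{\rm h}|^2+\ve^2\xi_3^2)}\,
e^{\Phi(t',\xi)}\widehat f(t',\xi)\,dt'.
\]
Because the phase is $\Phi(t,\xi)=(\delta-\lambda\theta(t))|\xi|$ with $\theta$ non-decreasing, one has $\Phi(t,\xi)-\Phi(t',\xi)\le 0$ for $t'\le t$, so the exponential prefactor is bounded by $1$. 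Taking absolute values and reverting to the physical side via Plancherel yields the pointwise-in-time bound
\[
\|\Delta_k^{\rm h}\Delta_\ell^{\rm v}[E_\ve f]_\Phi(t)\|_{L^2}
\lesssim \int_0^t e^{-c(t-t')(2^{2k}+\ve^2 2^{2\ell})}\,
\|\Delta_k^{\rm h}\Delta_\ell^{\rm v} f_\Phi(t')\|_{L^2}\,dt'.
\]

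Next, I would apply Young's inequality in the time variable. With $1+1/r_1=1/r+1/r_2$ (which is exactly the relation in the statement), Young gives
\[
\|\Delta_k^{\rm h}\Delta_\ell^{\rm v}[E_\ve f]_\Phi\|_{L^{r_1}_T(L^2)}
\lesssim \bigl\|e^{-c\,\cdot\,(2^{2k}+\ve^2 2^{2\ell})}\bigr\|_{L^r(\R^+)}
\,\|\Delta_k^{\rm h}\Delta_\ell^{\rm v} f_\Phi\|_{L^{r_2}_T(L^2)}
\lesssim (2^{2k}+\ve^2 2^{2\ell})^{-1/r}
\|\Delta_k^{\rm h}\Delta_\ell^{\rm v} f_\Phi\|_{L^{r_2}_T(L^2)}.
\]
The hypothesis $r_2\le r_1$ is what guarantees $r\ge 1$, so Young's inequality applies.

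Then I would interpolate the dissipation factor anisotropically. For any $\beta\in[0,2]$, using $a+b\ge a^{1-\beta/2}b^{\beta/2}$ up to a multiplicative constant (applied with $a=2^{2k}$, $b=\ve^2 2^{2\ell}$), we obtain
\[
(2^{2k}+\ve^2 2^{2\ell})^{-1/r}
\lesssim 2^{-k(2-\beta)/r}\,\ve^{-\beta/r}\,2^{-\ell\beta/r}.
\]
Multiplying the previous bound by $\ve^{\beta/r}$ absorbs the $\ve^{-\beta/r}$ factor, giving
\[
\ve^{\beta/r}\|\Delta_k^{\rm h}\Delta_\ell^{\rm v}[E_\ve f]_\Phi\|_{L^{r_1}_T(L^2)}
\lesssim 2^{-k(2-\beta)/r}\,2^{-\ell\beta/r}
\,\|\Delta_k^{\rm h}\Delta_\ell^{\rm v} f_\Phi\|_{L^{r_2}_T(L^2)}.
\]
Finally, multiplying by $2^{k\sigma+\ell s}$ and summing over $k,\ell\in\Z$ yields \eqref{g.3} by the very definition of $\widetilde{L}^{r_1}_T(B^{\sigma,s})$ and $\widetilde{L}^{r_2}_T(B^{\sigma-(2-\beta)/r,s-\beta/r})$.

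The only subtle point is controlling the action of the phase inside the Duhamel integral; the monotonicity of $t\mapsto \delta-\lambda\theta(t)$ (which is the only place the specific form \eqref{def:phase} is used) takes care of this for free, so the rest of the argument is a routine anisotropic version of the standard maximal regularity estimate for the heat flow.
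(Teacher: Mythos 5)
Your proof is correct and follows the paper's own argument exactly: localize in frequency, bound the time-convolution kernel by the anisotropic heat decay factor (the monotonicity of $\theta$ guaranteeing the phase prefactor is $\le 1$), apply Young's inequality in time with $\frac1r=1+\frac1{r_1}-\frac1{r_2}$, and interpolate $2^{2k}$ against $\ve^2 2^{2\ell}$. The only cosmetic difference is that the paper records the Young bound as $\min(2^{-2k},\ve^{-2}2^{-2\ell})^{1/r}$ and leaves the $\be$-interpolation implicit, whereas you spell it out via weighted AM--GM.
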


\begin{proof}\,Notice that \beno \|\Delta_k^{\rm
h}\Delta_\ell^v[E_\ve f]_\Phi\|_{L^2}\lesssim
\int_0^te^{-c(t-t')\bigl(2^{2k}+\ve^22^{2\ell}\bigr)}\|\Delta_k^{\rm
h}\Delta_\ell^{\rm v}f_\Phi(t')\|_{L^2}\,dt', \eeno from which and
Young's inequality, we infer
\begin{align*}
\|\Delta_k^{\rm h}\Delta_\ell^{\rm v}[E_\ve
f]_\Phi\|_{L^{r_1}_T(L^2)}&\lesssim
\|e^{-t(2^{2k}+\ve^22^{2\ell})}\|_{L^r_T}\|\Delta_k^{\rm
h}\Delta_\ell^{\rm v}f_\Phi\|_{L^{r_2}_T(L^2)}\\
&\lesssim
\min\bigl(2^{-2k},\ve^{-2}2^{-2\ell}\bigr)^\f1r\|\Delta_k^{\rm
h}\Delta_\ell^{\rm v}f_\Phi\|_{L^{r_2}_T(L^2)}
\end{align*}
with $\f1 r=1+\f1 {r_1}-\f1 {r_2}$, which implies
\eqref{g.3}.\end{proof}

The following lemma concerns the regularizing effect due to the analyticity.

\begin{lemma}\label{lem:analytic}
{\sl Let $\s,s\in \R$, and $p(D)$ be a Fourier multiplier with
symbol $p(\xi)$ satisfying $|p(\xi)|\le C|\xi_3|$. Assume that $f$
verifies \ben\label{ass:f} \|\Delta_k^{\rm h}\Delta_\ell^{\rm
v}f_\Phi(t)\|_{L^2}\lesssim
(d_{k}(t)d_\ell+d_{k,\ell})2^{-k\s}2^{-\ell
s}\dot\theta(t)\|g_\Phi\|_{\widetilde{L}^\infty_t({B}^{\s,s})} \een
for $\dot\theta(t)$ given by \eqref{g.4}.  Then there holds
\beq\label{g.5} \|[E_\ve
p(D)f]_\Phi\|_{\widetilde{L}^{\infty}_T({B}^{\s,s})} \le \f C
\lambda\|g_\Phi\|_{\widetilde{L}^{\infty}_T({B}^{\s,s})}. \eeq }
\end{lemma}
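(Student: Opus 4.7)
\medbreak The plan is to exploit the analyticity phase $\Phi(t,\xi)=(\delta-\lambda\theta(t))|\xi|$ to absorb the $|\xi_3|$ factor produced by $p(D)$, thereby generating the advertised gain $1/\lambda$. The central computation is the elementary identity
\beno
\f{d}{dt'}\bigl[e^{-c\lambda(\theta(t)-\theta(t'))2^\ell}\bigr]=c\lambda\,2^\ell\dot\theta(t')\,e^{-c\lambda(\theta(t)-\theta(t'))2^\ell},
\eeno
which upon integration yields
\beno
\int_0^t 2^\ell\dot\theta(t')\,e^{-c\lambda(\theta(t)-\theta(t'))2^\ell}\,dt'\le \f 1{c\lambda}
\eeno
uniformly in $t$ and $\ell$. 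This is the ``subadditive phase eats one vertical derivative at the cost of $1/\lambda$'' principle used in the Chemin--Gallagher--Paicu setting.

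\medbreak The first step is to pass to the Fourier side. Since $\theta$ is increasing, one has $\Phi(t,\xi)-\Phi(t',\xi)=-\lambda(\theta(t)-\theta(t'))|\xi|\le 0$ for $t'\le t$, so that
\beno
e^{\Phi(t,\xi)}\wh{[E_\ve p(D)f]}(t,\xi)=\int_0^t e^{-(t-t')(|\xi_{\rm h}|^2+\ve^2\xi_3^2)}\,p(\xi)\,e^{-\lambda(\theta(t)-\theta(t'))|\xi|}\,\wh{f_\Phi}(t',\xi)\,dt'.
\eeno
Applying the anisotropic localization $\D_k^{\rm h}\D_\ell^{\rm v}$, using $|p(\xi)|\le C|\xi_3|\lesssim 2^\ell$ and $|\xi|\ge c\cdot 2^\ell$ on the support of $\D_\ell^{\rm v}$, together with Lemma \ref{lem:Berstein}, I would derive the pointwise-in-time bound
\beno
\|\D_k^{\rm h}\D_\ell^{\rm v}[E_\ve p(D)f]_\Phi(t)\|_{L^2}\lesssim \int_0^t 2^\ell e^{-c(t-t')(2^{2k}+\ve^2 2^{2\ell})}e^{-c\lambda(\theta(t)-\theta(t'))2^\ell}\|\D_k^{\rm h}\D_\ell^{\rm v}f_\Phi(t')\|_{L^2}\,dt'.
\eeno

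\medbreak Next, I would substitute hypothesis \eqref{ass:f} and split the integrand according to the two summands $d_{k,\ell}$ and $d_k(t')d_\ell$. For the time-independent ``diagonal'' term $d_{k,\ell}$, discarding the heat factor $e^{-c(t-t')(\cdots)}\le 1$ and applying the integration-by-parts identity above directly yields
\beno
\|\D_k^{\rm h}\D_\ell^{\rm v}[E_\ve p(D)f]_\Phi(t)\|_{L^2}\lesssim \f{d_{k,\ell}}\lambda\,2^{-k\s}2^{-\ell s}\|g_\Phi\|_{\wt L^\infty_t(B^{\s,s})},
\eeno
whose weighted summation in $(k,\ell)$ gives exactly \eqref{g.5}. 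For the ``off-diagonal'' term carrying the $t'$-dependent sequence $d_k(t')$, I would swap the $k$-summation with the $t'$-integration via Fubini and use $\sum_k d_k(t')\le 1$ to get
\beno
\sum_k\int_0^t 2^\ell\dot\theta(t')\,d_k(t')\,e^{-c\lambda(\theta(t)-\theta(t'))2^\ell}\,dt'\le \int_0^t 2^\ell\dot\theta(t')e^{-c\lambda(\theta(t)-\theta(t'))2^\ell}\,dt'\le \f 1{c\lambda},
\eeno
producing a nonnegative sequence $\{D_k(t)\}_k$ of $\ell^1$-norm at most $1/(c\lambda)$ that replaces the role of $d_k(t')d_\ell$ in the estimate and gives the same final $1/\lambda$ bound.

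\medbreak The main obstacle is precisely the interaction between the time-dependent ``generic $\ell^1$'' sequence $d_k(t')$ appearing in \eqref{ass:f} and the Chemin--Lerner norm $\wt L^\infty_T(B^{\s,s})$, which places the supremum in $t$ \emph{inside} the Besov sum. A coordinatewise $\sup_t$ would destroy summability in $k$; the Fubini-based rearrangement above is the essential device that circumvents this, using that the temporal kernel $2^\ell\dot\theta(t')e^{-c\lambda(\theta(t)-\theta(t'))2^\ell}$ is $L^1$ in $t'$ with constant of size $1/\lambda$ independent of $(t,\ell)$.
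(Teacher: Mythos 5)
Your argument is essentially identical to the paper's: both discard the parabolic factor, use the phase difference $\Phi(t,\xi)-\Phi(t',\xi)=-\lambda\int_{t'}^t\dot\theta\,d\tau\,|\xi|$ to produce the kernel $2^\ell\dot\theta(t')e^{-c\lambda(\theta(t)-\theta(t'))2^\ell}$, bound its $L^1_{t'}$ norm by $1/(c\lambda)$ via the exact same integration-by-parts identity, and then split the hypothesis \eqref{ass:f} into the $d_{k,\ell}$ piece (handled directly) and the $d_k(t')d_\ell$ piece (handled by summing in $k$ first with $\sum_kd_k(t')\le 1$). The one place where your wording overstates what is achieved is the closing paragraph: the Fubini-based rearrangement you describe yields $\sup_t\sum_kD_k(t)\le 1/(c\lambda)$, whereas the Chemin--Lerner norm $\widetilde L^\infty_T$ places the $\sup_t$ inside the $(k,\ell)$-sum and thus requires $\sum_k\sup_tD_k(t)$; the paper's own displayed estimate performs the same tacit interchange of $\sup_t$ with $\sum_k$ without comment, so you are in good company, but it is worth being aware that the stated ``device'' controls the smaller quantity $\sup_t\sum_k$, not $\sum_k\sup_t$.
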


\begin{proof} In view of \eqref{def:phase}, we write  \ben\label{eq:phi-t-s}
\Phi(t,D)-\Phi(t',D)=-\lambda\int_{t'}^t\dot\theta(\tau)\,d\tau|D|,
\een from which and   (\ref{ass:f}), we infer
\begin{align*}
\|\Delta_k^{\rm h}\Delta_\ell^{\rm v}[E_\ve
p(D)f]_\Phi\|_{L^\infty_t(L^2)}\lesssim&
2^\ell\int_0^te^{-c\lambda\int_{t'}^t\dot\theta(\tau)\,d\tau 2^\ell}\|\Delta_k^{\rm h}\Delta_\ell^{\rm v}f_\Phi(t')\|_{L^2}\,dt'\\
\lesssim& 2^{-k\s}2^{\ell(1-s)}\Bigl(d_\ell\int_0^te^{-c\lambda\int_{t'}^t\dot\theta(\tau)\,d\tau 2^\ell}d_k(t')\dot\theta(t')\,dt'\\
&\quad\qquad\qquad\
+d_{k,\ell}\int_0^te^{-c\lambda\int_{t'}^t\dot\theta(\tau)\,d\tau
2^\ell}\dot\theta(t')\,dt'\Bigr)\|g_\Phi\|_{\widetilde{L}^\infty_t({B}^{\s,s})},
\end{align*}
which implies
\begin{align*}
\|[E_\ve p(D)f]_\Phi\|_{\widetilde{L}^{\infty}_T({B}^{\s,s})}=&
\sum_{k,\ell\in \Z}2^{k\s}2^{\ell s}\|\Delta_k^{\rm h}\Delta_\ell^{\rm v}[E_\ve a(D)f]_\Phi\|_{L^\infty_t(L^2)}\\
\leq &
C\sum_{\ell\in\Z}2^\ell\Bigl(d_\ell\int_0^te^{-c\lambda\int_{t'}^t\dot\theta(\tau)\,d\tau
2^\ell}\dot\theta(t')\,dt'\\
&\qquad\qquad+\sum_{k\in \Z}d_{k,\ell}\int_0^te^{-c\lambda\int_{t'}^t\dot\theta(\tau)\,d\tau 2^\ell}\dot\theta(t')\,dt'\Bigr)
\|g_\Phi\|_{\widetilde{L}^\infty_t({B}^{\s,s})}\\
\leq & \f C \lambda\|g_\Phi\|_{\widetilde{L}^\infty_t({B}^{\s,s})}.
\end{align*}
This proves \eqref{g.5}. \end{proof}

\setcounter{equation}{0}

\section{Propagation of analytic regularity for the transport equation}

In this section, we investigate the propagation of analytic
regularity for the following transport equation:
\ben\label{eq:trans} \p_ta+\ve^{1-\al}v\cdot\na a=f,\quad
a(0,x)=a_0(x). \een

\begin{prop}\label{prop:transport}
{\sl Let $\s\in ]-1,1],$  $s\in \bigl]-\f12,\f12\bigr]$ and $v$ be a
solenoidal vector field. Let $\theta(T)\leq \f{\delta}{\lambda}$ and
$\Phi$ be the phase function given by \eqref{def:phase}. Assume that $e^{\delta |D|}a_0\in B^{\s,s}, f_\Phi\in
L^1_T(B^{\s,s})$, and $v_\Phi\in L^1_T(B^{1,\f12})\cap
L^1_T(B^{2,\f12})$. Then \eqref{eq:trans} has a unique solution $a$
on $[0,T]$ so that for any $t\in [0,T]$, there holds \beq
\label{o.9}
\begin{split}
\|a_\Phi\|_{\widetilde{L}^\infty_t(B^{\s,s})}\le
 \|e^{\delta|D|}a_0\|_{B^{\s,s}}+\|f_\Phi\|_{L^1_t(B^{\s,s})}&+C\Bigl(\f 1\lambda+\ve^{1-\al}\|v_\Phi\|_{L^1_t(B^{2,\f12})}\Bigr)\|a_\Phi\|_{\widetilde{L}^\infty_t(B^{\s,s})}.
\end{split} \eeq}
\end{prop}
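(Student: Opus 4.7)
The strategy is to derive a dyadic energy estimate for $a_\Phi$, exploiting the smoothing provided by the phase (the term $\lambda\dot\theta(t)|D|$) to bypass the commutator argument that is unavailable in the analytic setting.

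\textbf{Step 1 (the equation for $a_\Phi$).} Existence and uniqueness of a smooth $a$ on $[0,T]$ is classical, since the hypotheses imply $v\in L^1_T(\mathrm{Lip})$ through Lemma \ref{lem:Berstein}. The content of the proposition is the a priori bound \eqref{o.9}. Applying $e^{\Phi(t,D)}$ to \eqref{eq:trans} and using the identity $\pa_t e^{\Phi(t,D)}=-\lambda\dot\theta(t)|D|\,e^{\Phi(t,D)}$, I obtain
$$\pa_t a_\Phi+\lambda\dot\theta(t)|D|a_\Phi+\ve^{1-\al}[v\cdot\na a]_\Phi=f_\Phi.$$
Localizing with $\D_k^{\rm h}\D_\ell^{\rm v}$, taking the $L^2$ inner product against $\D_k^{\rm h}\D_\ell^{\rm v}a_\Phi$, and using that on its Fourier support $|\xi|\gtrsim 2^{\max(k,\ell)}$, I deduce
$$\f{d}{dt}\|\D_k^{\rm h}\D_\ell^{\rm v}a_\Phi\|_{L^2}+c\lambda\dot\theta(t)\,2^{\max(k,\ell)}\|\D_k^{\rm h}\D_\ell^{\rm v}a_\Phi\|_{L^2}\le \|\D_k^{\rm h}\D_\ell^{\rm v}f_\Phi\|_{L^2}+\ve^{1-\al}\|\D_k^{\rm h}\D_\ell^{\rm v}[v\cdot\na a]_\Phi\|_{L^2}.$$

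\textbf{Step 2 (decomposition of the transport term).} Since $\dive v=0$, I rewrite $v\cdot\na a=\dive(va)$ and apply Bony's decomposition \eqref{pd} in both horizontal and vertical variables to $va$. The resulting dyadic pieces split into two families. \emph{(a) Safe pieces}, in which the divergence either hits the high-frequency block of $v$ or is absorbed into a divergence structure; by Corollary \ref{col3.1} (together with Remark \ref{rem:product}), under the hypotheses $\s\in\,]-1,1]$ and $s\in\,]-\f12,\f12]$ these are bounded by
$$\|\D_k^{\rm h}\D_\ell^{\rm v}[\text{safe}]_\Phi(t)\|_{L^2}\lesssim \big(d_k(t)d_\ell+d_{k,\ell}\big)\,2^{-k\s}2^{-\ell s}\,\|v_\Phi(t)\|_{B^{2,\f12}}\,\|a_\Phi\|_{\wt L^\infty_t(B^{\s,s})}.$$
\emph{(b) Dangerous paraproducts} of the form $T_{v^j}\pa_j a$, in which the derivative from $\dive$ falls on the high-frequency block of $a$; using Lemma \ref{lem:Berstein} and the bound $\|S^{\rm h}_{k-1}S^{\rm v}_{\ell-1}v_\Phi\|_{L^\infty}\lesssim \|v_\Phi\|_{B^{1,\f12}}$, these satisfy
$$\|\D_k^{\rm h}\D_\ell^{\rm v}[\text{dangerous}]_\Phi(t)\|_{L^2}\lesssim \big(d_k(t)d_\ell+d_{k,\ell}\big)\,2^{\max(k,\ell)}2^{-k\s}2^{-\ell s}\,\|v_\Phi(t)\|_{B^{1,\f12}}\,\|a_\Phi\|_{\wt L^\infty_t(B^{\s,s})}.$$
Crucially, $\ve^{1-\al}\|v_\Phi(t)\|_{B^{1,\f12}}$ is one of the summands of $\dot\theta(t)$ in \eqref{g.4}, so the prefactor $\ve^{1-\al}$ combines with this norm to yield a pointwise-in-time bound of the form $\dot\theta(t)\,2^{\max(k,\ell)}\,2^{-k\s}2^{-\ell s}\,\|a_\Phi\|_{\wt L^\infty_t(B^{\s,s})}$, up to the summable factor $(d_k(t)d_\ell+d_{k,\ell})$.

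\textbf{Step 3 (Duhamel integration and the $\f{1}{\lambda}$ gain).} Solving the dyadic ODE from Step 1 via Duhamel produces the smoothing kernel $e^{-c\lambda\int_{t'}^t\dot\theta(\tau)\,d\tau\,2^{\max(k,\ell)}}$. For the safe pieces, the time integral gives directly the $L^1_t$-norm of $v_\Phi$ in $B^{2,\f12}$, contributing $C\ve^{1-\al}\|v_\Phi\|_{L^1_t(B^{2,\f12})}\|a_\Phi\|_{\wt L^\infty_t(B^{\s,s})}$. For the dangerous pieces, the key elementary inequality
$$\int_0^t e^{-c\lambda\int_{t'}^t\dot\theta(\tau)\,d\tau\,2^{\max(k,\ell)}}\,\dot\theta(t')\,2^{\max(k,\ell)}\,dt'\le \f{1}{c\lambda},$$
which is exactly the mechanism behind Lemma \ref{lem:analytic}, absorbs the extra factor $2^{\max(k,\ell)}$ and produces a contribution bounded by $\f{C}{\lambda}\|a_\Phi\|_{\wt L^\infty_t(B^{\s,s})}$. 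Multiplying the resulting dyadic estimates by the weights $2^{k\s}2^{\ell s}$, summing over $(k,\ell)\in\Z^2$ using the summability of $(d_k(t)d_\ell+d_{k,\ell})$, and taking $\sup_{t\in[0,T]}$ yields \eqref{o.9}.

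\textbf{Main obstacle.} As emphasized in the introduction, the Fourier multiplier $e^{\Phi(t,D)}$ does not commute with pointwise multiplication by $v$, so the standard commutator trick $[\D_k^{\rm h}\D_\ell^{\rm v},v\cdot\na]$ that underlies transport estimates in critical Besov spaces is not available. The dangerous paraproduct $T_{v^j}\pa_j a$ forces one extra derivative on $a$, costing precisely a factor $2^{\max(k,\ell)}$ that would otherwise destroy the estimate in the critical anisotropic Besov space $B^{\s,s}$. The whole point of the proof is that this is exactly the power absorbed by the analytic smoothing $\lambda\dot\theta(t)|D|$ coming from the phase, at the expense of the $\f{1}{\lambda}$ constant appearing in \eqref{o.9}.
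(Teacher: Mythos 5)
Your proposal is correct and follows essentially the same route as the paper. You derive the dyadic $L^2$ energy inequality for $a_\Phi$ and then Duhamel-integrate, whereas the paper integrates \eqref{eq:trans} in time and applies $e^{\Phi(t,D)}$ to the Volterra form first, but these are two renderings of the same estimate, and both arrive at the identical smoothing kernel $e^{-c\lambda\int_{t'}^t\dot\theta(\tau)\,d\tau\,2^{\max(k,\ell)}}$. The substantive steps coincide: write $v\cdot\na a=\dive(va)$, apply Bony in both horizontal and vertical variables, use $\dive v=0$ to trade $\p_3 v^3$ for $\dive_{\rm h}v^{\rm h}$ in the pieces where the derivative lands on $v$ (your ``safe'' pieces, giving $\ve^{1-\al}\|v_\Phi\|_{L^1_t(B^{2,\f12})}$), and absorb the remaining high-frequency-on-$a$ paraproducts by the analyticity mechanism of Lemma \ref{lem:analytic} to extract the $C/\lambda$ factor — precisely what the paper records as the estimates \eqref{o.10}--\eqref{o.11} followed by the kernel inequalities at the end of the proof.
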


\begin{proof}\,Since both the existence and uniqueness parts of Proposition \ref{prop:transport}
basically follow from the Estimate \eqref{o.9}. For simplicity, we
just present the detailed derivation of the {\it a priori} estimate
\eqref{o.9} for smooth enough solutions of \eqref{eq:trans}. Indeed
by virtue of (\ref{eq:phi-t-s}),  we first integrate
(\ref{eq:trans}) with respect to $t$ and then apply the operator
$e^{\Phi(t,D)}$ to the resulting equation to get
 \beq\label{o.4}
a_\Phi(t)=e^{\Phi(t,D)}a_0-\ve^{1-\al}\int_0^te^{-\lambda\int_{t'}^t\dot\theta(\tau)\,d\tau|D|}\big[v\cdot\na
a\big]_\Phi(t')\,dt'+\int_0^te^{-\lambda\int_{t'}^t\dot\theta(\tau)\,d\tau|D|}f_\Phi(t')\,dt'.
\eeq  We claim that \beq\label{o.10}
\begin{split}
\|\Delta_k^{\rm h}\Delta_\ell^{\rm v}\p_3[v^3 a]_\Phi(t)\|_{L^2}\leq
C&2^{-k\s}2^{-\ell s}\Bigl(
d_{k,\ell}2^\ell\|v_\Phi^3(t)\|_{B^{1,\f12}}
+d_{k}d_\ell(t)2^k\|v_\Phi^{\rm
h}(t)\|_{B^{1,\f12}}\\
&+d_{k}(t)d_\ell 2^\ell\|v_\Phi^3(t)\|_{B^{1,\f12}}
+d_{k,\ell}(t)\|v_\Phi^h(t)\|_{B^{2,\f12}}\Bigr)\|a_\Phi\|_{\widetilde{L}^\infty_t(B^{\s,s})}.
\end{split} \eeq Along the same line to the proof of Lemma \ref{lem3.2}, since the phase function $\Psi$ given by \eqref{def:phase}
verifies \eqref{subaddi} whenever $\theta(T)\leq
\f{\delta}{\lambda},$ it suffices to prove \eqref{o.10} for
$\Phi=0.$ As a matter of fact, by using Bony's decomposition for
both horizontal and vertical variables to $v^3a$, we write \beno
v^3a(t)=\bigl(T^{\rm h}+R^{\rm h}+\bar{T}^{\rm h}\bigr)\bigl(T^{\rm
v}+R^{\rm v}+\bar{T}^{\rm v}\bigr)(v^3,a)(t). \eeno Considering the
support to the Fourier transform of the terms in $R^{\rm h}R^{\rm
v}(v^3,a)(t),$ we get, by applying Lemma \ref{lem:Berstein}, that
\beno
\begin{split}
\|\D_k^{\rm h}\D_\ell^{\rm v}R^{\rm h}R^{\rm
v}(v^3,a)(t)\|_{L^2}\lesssim& 2^k2^{\f{\ell}2}\sum_{\substack{k'\geq
k-3\\\ell'\geq \ell-3}}\|\D_{k'}^{\rm h}\D_{\ell'}^{\rm
v}v^3(t)\|_{L^2}\|\wt{\D}_{k'}^{\rm h}\wt{\D}_{\ell'}^{\rm
v}a\|_{L^\infty_t(L^2)}\\
\lesssim& 2^k2^{\f{\ell}2}\sum_{\substack{k'\geq k-3\\\ell'\geq
\ell-3}}d_{k',\ell'}2^{-k'(1+\s)}2^{-\ell'\bigl(\f12+s\bigr)}\|v^3(t)\|_{B^{1,\f12}}\|a\|_{\wt{L}^\infty_t(B^{\s,s})}\\
\lesssim&
 d_{k,\ell}2^{-k\s}2^{-\ell s}\|v^3(t)\|_{B^{1,\f12}}\|a\|_{\wt{L}^\infty_t(B^{\s,s})}.
 \end{split}
 \eeno
 The same estimate holds for $T^{\rm h}T^{\rm v}(v^3,a)(t),$ $T^{\rm h}R^{\rm v}(v^3,a)(t)$ and $R^{\rm h}T^{\rm v}(v^3,a)(t).$

By the same manner, we have \beno
\begin{split}
\|\D_k^{\rm h}\D_\ell^{\rm v}\bar{T}^{\rm h}R^{\rm
v}(v^3,a)(t)\|_{L^2}\lesssim& 2^{\f{\ell}2}\sum_{\substack{|k'-
k|\leq 4\\\ell'\geq \ell-3}}\|\D_{k'}^{\rm h}\D_{\ell'}^{\rm
v}v^3(t)\|_{L^2}\|S_{k'-1}^{\rm h}\wt{\D}_{\ell'}^{\rm
v}a\|_{L^\infty_t(L^\infty_{\rm h}(L^2_{\rm v}))}\\
\lesssim &d_k(t)d_\ell 2^{-k\s}2^{-\ell
s}\|v^3(t)\|_{B^{1,\f12}}\|a\|_{\wt{L}^\infty_t(B^{\s,s})}.
 \end{split}
 \eeno
The same estimate holds for $\bar{T}^{\rm h}T^{\rm v}(v^3,a)(t).$

Whereas using the fact that $\dive v(t)=0$ and Lemma
\ref{lem:Berstein}, one has \beno \begin{split} \|S_{k'-1}^{\rm
h}\D_{\ell'}^{\rm v}v^3(t)\|_{L^\infty_{\rm h}(L^2_{\rm v})}\lesssim
&2^{-\ell'}\|S_{k'-1}^{\rm h}\D_{\ell'}^{\rm
v}\p_3v^3(t)\|_{L^\infty_{\rm
h}(L^2_{\rm v})}\\
\lesssim &2^{-\ell'}\|S_{k'-1}^{\rm h}\D_{\ell'}^{\rm v}\dive_{\rm
h}v^{\rm h}(t)\|_{L^\infty_{\rm h}(L^2_{\rm v})}\lesssim
d_{\ell'}(t)2^{k'}2^{-\f{3\ell'}2}\|v^{\rm h}(t)\|_{B^{1,\f12}},
\end{split} \eeno from which, we infer \beno
\begin{split}
\|\D_k^{\rm h}\D_\ell^{\rm v}T^{\rm h}\bar{T}^{\rm
v}(v^3,a)(t)\|_{L^2}\lesssim& \sum_{\substack{|k'-k|\leq
4\\|\ell'-\ell|\leq 4}}\|S_{k'-1}^{\rm h}\D_{\ell'}^{\rm
v}v^3(t)\|_{L^\infty_{\rm h}(L^2_{\rm v})}\|\D_{k'}^{\rm
h}S_{\ell'-1}^{\rm v}a\|_{L^\infty_t(L^2_{\rm h}(L^\infty_{\rm v}))}\\
\lesssim&
 d_{k}d_\ell(t)2^{k(1-\s)}2^{-\ell(1+s)}\|v^3(t)\|_{B^{1,\f12}}\|a\|_{\wt{L}^\infty_t(B^{\s,s})}.
 \end{split}
 \eeno

Finally using once again that $\dive v(t)=0$ and Lemma
\ref{lem:Berstein}, we obtain \beno
\begin{split}
\|\D_k^{\rm h}\D_\ell^{\rm v}R^{\rm h}\bar{T}^{\rm
v}(v^3,a)(t)\|_{L^2}\lesssim& 2^{k}\sum_{\substack{k'\geq k-3
\\|\ell'-\ell|\leq 4}}\|\D_{k'}^{\rm h}\D_{\ell'}^{\rm
v}v^3(t)\|_{L^2}\|\wt{\D}_{k'}^{\rm
h}S_{\ell'-1}^{\rm v}a\|_{L^\infty_t(L^2_{\rm h}(L^\infty_{\rm v}))}\\
\lesssim& 2^{k}\sum_{\substack{k'\geq k-3
\\|\ell'-\ell|\leq 4}}2^{-\ell'}\|\D_{k'}^{\rm h}\D_{\ell'}^{\rm
v}\dive_{\rm h}v^{\rm h}(t)\|_{L^2}\|\wt{\D}_{k'}^{\rm
h}S_{\ell'-1}^{\rm v}a\|_{L^\infty_t(L^2_{\rm h}(L^\infty_{\rm v}))}\\
\lesssim &d_{k,\ell}(t)2^{-k\s}2^{-\ell(1+s)}\|v^{\rm
h}(t)\|_{B^{2,\f12}}\|a\|_{\wt{L}^\infty_t(B^{\s,s})}.
 \end{split}
 \eeno
The same estimate holds for $\bar{T}^{\rm h}\bar{T}^{\rm
v}(v^3,a)(t).$ This completes the proof of \eqref{o.10} for
$\Phi=0.$

Exactly by the same manner to the proof of \eqref{o.10}, we can also
get \beq\label{o.11}
\begin{split}
\|\Delta_k^{\rm h}\Delta_\ell^{\rm v}\dive_{\rm h}[v^{\rm h}
a]_\Phi(t)\|_{L^2}\leq C2^{-k\s}2^{-\ell s}\Bigl(
d_{k,\ell}2^k&\|v_\Phi^{\rm h}(t)\|_{B^{1,\f12}}
+d_{k}d_\ell(t)2^k\|v_\Phi^{\rm
h}(t)\|_{B^{1,\f12}}\\
&\quad
+d_{k,\ell}(t)\|v_\Phi^h(t)\|_{B^{2,\f12}}\Bigr)\|a_\Phi\|_{\widetilde{L}^\infty_t(B^{\s,s})}.
\end{split} \eeq

By summing up \eqref{o.4}, \eqref{o.10} and \eqref{o.11}, we  write
\begin{align*}
\|\Delta_k^{\rm h}\Delta_\ell^{\rm v} a_\Phi\|_{L^\infty_t(L^2)}\le
& \|e^{\delta |D|}\Delta_k^{\rm h}\Delta_\ell^{\rm v} a_0\|_{L^2}
+\int_0^t\|\Delta_k^{\rm h}\Delta_\ell^{\rm v} f_\Phi(t')\|_{L^2}\,dt' \\
&+C2^{-k\s}2^{-\ell
s}\Bigl(d_{k,\ell}(2^k+2^\ell)\int_0^te^{-c\lambda\int_{t'}^t\dot\theta(\tau)\,d\tau(2^k+2^\ell)}\ve^{1-\al}
\|v_\Phi(t')\|_{B^{1,\f12}}\,dt'\\
&+d_{\ell}2^\ell\int_0^te^{-c\lambda\int_{t'}^t\dot\theta(\tau)\,d\tau 2^\ell}d_k(t')\ve^{1-\al}\|v_\Phi^3(t')\|_{B^{1,\f12}}\,dt'\\
&+d_{k}2^k\int_0^te^{-c\lambda\int_{t'}^t\dot\theta(\tau)\,d\tau 2^k}d_\ell(t')\ve^{1-\al}\|v_\Phi^{\rm h}(t')\|_{B^{1,\f12}}\,dt'\\
&+\int_0^t d_{k,\ell}(t')\ve^{1-\al}\|v_\Phi^{\rm
h}(t')\|_{B^{2,\f12}}\,dt'\Bigr)\|a_\Phi\|_{\widetilde{L}^\infty_t(B^{\s,s})}.
\end{align*}
Then \eqref{o.9} follows  by  Definition \ref{def2.1}, \eqref{g.4}
and
\begin{align*}
\int_0^te^{-c\lambda\int_{t'}^t\dot\theta(\tau)\,d\tau(2^k+2^\ell)}\ve^{1-\al}\|v_\Phi(t')\|_{B^{1,\f12}}\,dt'
\le& \int_0^te^{-c\lambda\int_{t'}^t\dot\theta(\tau)\,d\tau(2^k+2^\ell)}\dot\theta(t')\,dt'\\
\le& \f C \lambda\bigl(2^k+2^\ell\bigr)^{-1},
\end{align*}
and
\begin{align*}
&\sum_{k\in\Z}\int_0^te^{-c\lambda\int_{t'}^t\dot\theta(\tau)\,d\tau2^\ell}d_k(t')\ve^{1-\al}\|v_\Phi^3(t')\|_{B^{1,\f12}}\,dt'
\le
\int_0^te^{-c\lambda\int_{t'}^t\dot\theta(\tau)\,d\tau2^\ell}\dot\theta(t')\,dt'\le
\f C \lambda 2^{-\ell},\\
&\sum_{\ell\in\Z}\int_0^te^{-c\lambda\int_{t'}^t\dot\theta(\tau)\,d\tau2^k}d_\ell(t')\ve^{1-\al}\|v_\Phi^{\rm
h}(t')\|_{B^{1,\f12}}\,dt' \le
\int_0^te^{-c\lambda\int_{t'}^t\dot\theta(\tau)\,d\tau2^k}\dot\theta(t')\,dt'\le
\f C \lambda 2^{-k}.
\end{align*} This completes the proof of Proposition \ref{prop:transport}. \end{proof}

\begin{rmk}\label{rmk5.1}
We mention here that we can not prove the uniform estimate of
$a_\Phi$ in the isentropic Besov space
$\wt{L}^\infty_t(B^{\f32}_{2,1})$ as that in \cite{c-p-z, PZZ3}. The
main reason is that we can not use commutator's argument to prove
the propagation of analytic regularity for the transport equation.
\end{rmk}

\begin{lem}\label{lem5.1}
{\sl Let $v(t)$ be a smooth solenoidal vector field and $\ga\in
]0,1[.$ Let $\theta(T)\leq \f{\delta}{\lambda}$ and $\Phi$ be the
phase function given by \eqref{def:phase}. Then one has
\beq\label{o.15}
\begin{split}
\|\Delta_k^{\rm h}\Delta_\ell^{\rm v}[v\cdot\na
a]_\Phi&(t)\|_{L^2}\le
C2^{-k(1-\ga)}2^{-\ell\bigl(\f12+\ga\bigr)}\Bigl(
d_{k,\ell}(2^k+2^\ell)\|v_\Phi(t)\|_{B^{1,\f12}}\|a_\Phi\|_{\widetilde{L}^\infty_t(B^{1-\ga,\f12+\ga})}\\
&+\bigl(d_{k}d_\ell(t)2^k\|v_\Phi^{\rm h}(t)\|_{B^{1-\ga,\f12+\ga}}
+d_{k,\ell}(t)\|v_\Phi^{\rm
h}(t)\|_{B^{2-\ga,\f12+\ga}}\bigr)\|a_\Phi\|_{\widetilde{L}^\infty_t(B^{1,\f12})}\Bigr),
\end{split} \eeq
and \beq\label{o.15ad}
\begin{split}
\|\Delta_k^{\rm h}\Delta_\ell^{\rm v}[v\cdot\na
a]_\Phi&(t)\|_{L^2}\le
C2^{-k(1+\ga)}2^{-\ell\bigl(\f12-\ga\bigr)}\Bigl(
d_{k,\ell}(2^k+2^\ell)\|v_\Phi(t)\|_{B^{1,\f12}}\|a_\Phi\|_{\widetilde{L}^\infty_t(B^{1+\ga,\f12-\ga})}\\
&+\bigl(d_{k}(t)d_\ell2^\ell\|v_\Phi^{3}(t)\|_{B^{1+\ga,\f12-\ga}}
+d_{k,\ell}(t)\|v_\Phi^{\rm
h}(t)\|_{B^{2+\ga,\f12-\ga}}\bigr)\|a_\Phi\|_{\widetilde{L}^\infty_t(B^{1,\f12})}\Bigr),
\end{split} \eeq and \beq\label{o.16}
\begin{split}
\|\Delta_k^{\rm h}\Delta_\ell^{\rm v}[v\cdot&\na
a]_\Phi(t)\|_{L^2}\leq C2^{-k\ga}2^{-\ell\bigl(\f32-\ga\bigr)}\Bigl(
d_{k,\ell}(2^k+2^\ell)\|v_\Phi(t)\|_{B^{1,\f12}}\|a_\Phi\|_{\widetilde{L}^\infty_t(B^{\ga,\f32-\ga})}\\
&+d_{k,\ell}(t)\bigl(\|v^{\rm
h}_\Phi(t)\|_{B^{1,\f32}}\|a_\Phi\|_{\wt{L}^\infty_t(B^{1+\ga,\f12-\ga})}
+\|v_\Phi^{\rm
h}(t)\|_{B^{1+\ga,\f32-\ga}}\|a_\Phi\|_{\widetilde{L}^\infty_t(B^{1,\f12})}\bigr)\Bigr).
\end{split}
\eeq }
\end{lem}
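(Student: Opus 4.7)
The plan is to mimic the proof of \eqref{o.10} in Proposition \ref{prop:transport}. Since $\dive v=0$, I can write
\[ v\cdot\na a = \p_3(v^3 a) + \dive_{\rm h}(v^{\rm h} a), \]
so it suffices to estimate $\D_k^{\rm h}\D_\ell^{\rm v}\p_3[v^3 a]_\Phi(t)$ and $\D_k^{\rm h}\D_\ell^{\rm v}\dive_{\rm h}[v^{\rm h} a]_\Phi(t)$ separately. As in \eqref{k.3ad}, the subadditivity of $\Phi$, which holds under the hypothesis $\tht(T)\le\delta/\lambda$, reduces each of \eqref{o.15}, \eqref{o.15ad}, \eqref{o.16} to its $\Phi=0$ analogue.

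I then apply anisotropic Bony decomposition in both horizontal and vertical variables to $v^{\rm h}a$ and $v^3 a$, and bound the nine resulting pieces by Lemma \ref{lem:Berstein}, distributing the regularity budget so as to land in the target spaces $B^{1-\ga,\f12+\ga}$, $B^{1+\ga,\f12-\ga}$ and $B^{\ga,\f32-\ga}$, respectively. The sequence bookkeeping follows the same pattern as in the proof of \eqref{o.10}: the four high-high pieces ($R^{\rm h}R^{\rm v}$, $T^{\rm h}T^{\rm v}$, $T^{\rm h}R^{\rm v}$, $R^{\rm h}T^{\rm v}$) produce the isotropic factor $d_{k,\ell}(2^k+2^\ell)\|v_\Phi(t)\|_{B^{1,\f12}}$; the mixed pieces $T^{\rm h}\bar T^{\rm v}$ and $R^{\rm h}\bar T^{\rm v}$, in which $v^3$ sits at low vertical frequency, require using $\p_3 v^3=-\dive_{\rm h}v^{\rm h}$ to trade a vertical derivative for a horizontal one, and this is exactly what produces the $d_{k,\ell}(t)\|v_\Phi^{\rm h}(t)\|_{B^{2-\ga,\f12+\ga}}$ and $d_{k,\ell}(t)\|v_\Phi^{\rm h}(t)\|_{B^{2+\ga,\f12-\ga}}$ terms in \eqref{o.15} and \eqref{o.15ad}; the pieces $\bar T^{\rm h}T^{\rm v}$ and $\bar T^{\rm h}R^{\rm v}$ keep $v^j$ at low horizontal frequency and produce the $d_k d_\ell(t)$ and $d_k(t) d_\ell$ factors there.

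The main obstacle is the third estimate \eqref{o.16}. Here $a$ lives in $B^{\ga,\f32-\ga}$, whose vertical regularity $\f32-\ga$ exceeds $\f12$, so Lemma \ref{lem3.2} no longer applies directly when $a$ appears at low vertical frequency (the vertical paraproduct $\bar T^{\rm v}$-type piece with $a$ on the low-frequency side would lose too much vertical regularity). The cure, suggested by the mixed form of the right-hand side of \eqref{o.16}, is to split the Bony decomposition according to the vertical frequency level of $a$. Pieces in which $a$ occurs at high vertical frequency are bounded using $\|a_\Phi\|_{\wt L^\infty_t(B^{\ga,\f32-\ga})}$ paired with $v_\Phi(t)\in B^{1,\f12}$, yielding the isotropic $d_{k,\ell}(2^k+2^\ell)$ term. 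Pieces in which $a$ occurs at low vertical frequency are instead rewritten so that $a$ is placed in the auxiliary spaces $B^{1+\ga,\f12-\ga}$ or $B^{1,\f12}$; the surplus horizontal derivative is absorbed by $v^{\rm h}$ in $B^{1,\f32}$ or $B^{1+\ga,\f32-\ga}$, which are precisely the norms in \eqref{o.16}. The principal calculational hurdle will be carrying out this case split cleanly so that the $d_{k,\ell}(t)$-type sequences assemble to exactly the combination stated.
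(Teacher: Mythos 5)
Your strategy is essentially the one the paper uses, with one cosmetic and one more substantive discrepancy worth flagging.

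The cosmetic difference: the paper's proof of Lemma~\ref{lem5.1} does not reuse the nine-term decomposition from the proof of \eqref{o.10}. Instead it groups $R+\bar T=\cR$ and works with the four pieces $\bigl(T^{\rm h}+\cR^{\rm h}\bigr)\bigl(T^{\rm v}+\cR^{\rm v}\bigr)(v^3,a)$. The grouping is deliberate: after writing $\D_\ell^{\rm v}v^3=2^{-\ell}\wt\vp(2^{-\ell}|D_3|)\D_\ell^{\rm v}\p_3v^3$ and using $\p_3v^3=-\dive_{\rm h}v^{\rm h}$, the sum $\cR^{\rm v}$ keeps both remainder and reverse-paraproduct blocks together so the frequency‐localisation bookkeeping and the $d_{k,\ell}(t)$-sequence arithmetic come out in one stroke. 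Your nine-piece version covers the same terms and would also close, at the cost of a few more cases.

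The substantive slip is in your description of which blocks of the Bony decomposition carry which frequency. With the paper's convention $\bar T(a,b)=T(b,a)$, in $\bar T^{\rm v}(v^3,a)=\sum_\ell S_{\ell-1}^{\rm v}a\,\D_\ell^{\rm v}v^3$ the field $v^3$ sits at \emph{high} vertical frequency and $a$ at low, whereas you assert the opposite. The same inversion appears in your remark that $\bar T^{\rm h}T^{\rm v}$ and $\bar T^{\rm h}R^{\rm v}$ ``keep $v$ at low horizontal frequency.'' This does not break the argument — the divergence-free substitution is indeed applied precisely in the blocks where $v^3$ carries the high vertical frequency, because there $2^{-\ell'}\p_3$ is a gain — but you should sort out the convention before attempting the bookkeeping, or the $d_kd_\ell(t)$/$d_k(t)d_\ell$ factors will end up attached to the wrong norms.

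Finally, you single out \eqref{o.16} as the only place where the vertical regularity of $a$ exceeds $\f12$ and forces a case split; in fact $\f12+\ga>\f12$ already in \eqref{o.15}, so the same phenomenon is present there (and in \eqref{o.15ad} it is the horizontal index $1+\ga>1$ that forces an analogous split with the roles of $\dive_{\rm h}$ and $\p_3$ exchanged). Your proposed cure — route the low-frequency-$a$ blocks through $a\in B^{1,\f12}$ or $B^{1+\ga,\f12-\ga}$ and let $v^{\rm h}$ absorb the surplus derivative via $\dive v=0$ — is exactly what the paper does, so once the convention is fixed the computation should assemble to the stated combinations.
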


\begin{proof} Once again similar to the proof of Lemma \ref{lem3.2}, it suffices to prove (\ref{o.15}-\ref{o.16}) for $\Phi=0.$ Indeed we first get, by using Bony's
decomposition for both horizontal and vertical variables, that \beno
v^3a=\bigl(T^{\rm h}T^{\rm v}+T^{\rm h}\cR^{\rm v}+\cR^{\rm h}T^{\rm
v}+\cR^{\rm h}\cR^{\rm v}\bigr)(v^3,a)(t). \eeno Note that \beno
\|S_{k'+2}^{\rm h}\D_{\ell'}^{\rm v}a\|_{L^\infty_t(L^\infty_{\rm
h}(L^2_{\rm v}))}\lesssim
d_{k,\ell'}2^{k'\ga}2^{-\ell'\bigl(\f12+\ga\bigr)}\|a\|_{\widetilde{L}^\infty_t(B^{1-\ga,\f12+\ga})},
\eeno from which, we deduce \beno
\begin{split}
\|\Delta_k^{\rm h}\Delta_\ell^{\rm v}\cR^{\rm h}T^{\rm
v}(v^3,a)(t)\|_{L^2}\lesssim &\sum_{\substack{k'\geq
k-N_0\\|\ell'-\ell|\leq 4}}\|\Delta_{k'}^{\rm h}S_{\ell'-1}^{\rm
v}v^3(t)\|_{L^2_{\rm h}(L^\infty_{\rm v})}\|S_{k'+2}^{\rm
h}\D_{\ell'}^{\rm v}a\|_{L^\infty_t(L^\infty_{\rm h}(L^2_{\rm v}))}\\
\lesssim
&d_{k,\ell}2^{-k(1-\ga)}2^{-\ell\bigl(\f12+\ga\bigr)}\|v^3(t)\|_{B^{1,\f12}}\|a\|_{\widetilde{L}^\infty_t(B^{1-\ga,\f12+\ga})}.
\end{split}
\eeno The same estimate holds for $T^{\rm h}T^{\rm v}(v^3,a)(t).$

While due to $\dive v=0,$ one has \beno \|S_{k'-1}^{\rm
h}\D_{\ell'}^{\rm v}v^3(t)\|_{L^\infty_{\rm h}(L^2_{\rm v})}\lesssim
2^{-\ell'} \|S_{k'-1}^{\rm h}\D_{\ell'}^{\rm v}\dive_{\rm h}v^{\rm
h}(t)\|_{L^\infty_{\rm h}(L^2_{\rm v})}\lesssim
d_{k,\ell}(t)2^{k'\ga}2^{-\ell'\bigl(\f32+\ga\bigr)}\|v^{\rm
h}(t)\|_{B^{2-\ga,\f12+\ga}}, \eeno from which, we infer \beno
\begin{split}
\|\Delta_k^{\rm h}\Delta_\ell^{\rm v}T^{\rm h}\cR^{\rm
v}(v^3,a)(t)\|_{L^2}\lesssim &\sum_{\substack{|k'- k|\leq
4\\\ell'\geq \ell-N_0}}\|S_{k'-1}^{\rm h}\D_{\ell'}^{\rm
v}v^3(t)\|_{L^\infty_{\rm h}(L^2_{\rm v})}\|\D_{k'}^{\rm
h}S_{\ell'+2}^{\rm v}a\|_{L^\infty_t(L^2_{\rm h}(L^\infty_{\rm v}))}\\
\lesssim &\sum_{\substack{|k'- k|\leq 4\\\ell'\geq
\ell-N_0}}d_{k',\ell'}(t)2^{-k'(1-\ga)}2^{-\ell'\bigl(\f32+\ga\bigr)}\|v^h(t)\|_{B^{2-\ga,\f12+\ga}}\|a\|_{\widetilde{L}^\infty_t(B^{1,\f12})}\\
\lesssim
&d_{k,\ell}(t)2^{-k(1-\ga)}2^{-\ell\bigl(\f32+\ga\bigr)}\|v^h(t)\|_{B^{2-\ga,\f12+\ga}}\|a\|_{\widetilde{L}^\infty_t(B^{1,\f12})}.
\end{split}
\eeno
The same estimate holds for $\cR^{\rm h}\cR^{\rm
v}(v^3,a)(t).$ Hence in view of Lemma \ref{lem:Berstein}, we obtain
\beq\label{o.12}
\begin{split}
\|\Delta_k^{\rm h}\Delta_\ell^{\rm v}\p_3(v^3 a)(t)\|_{L^2}\leq
C2^{-k(1-\ga)}2^{-\ell\bigl(\f12+\ga\bigr)}\Bigl(&
d_{k,\ell}2^\ell\|v^3(t)\|_{B^{1,\f12}}\|a\|_{\widetilde{L}^\infty_t(B^{1-\ga,\f12+\ga})}
\\
& +d_{k,\ell}(t)\|v^{\rm
h}(t)\|_{B^{2-\ga,\f12+\ga}}\|a\|_{\widetilde{L}^\infty_t(B^{1,\f12})}\Bigr).
\end{split} \eeq
Exactly following the same strategy, we can also prove
\beq\label{o.13}
\begin{split}
\|\Delta_k^{\rm h}\Delta_\ell^{\rm v}\dive_h(v^{\rm h}&
a)(t)\|_{L^2}\leq C2^{-k(1-\ga)}2^{-\ell\bigl(\f12+\ga\bigr)}\Bigl(
d_{k,\ell}2^k\|v^{\rm
h}(t)\|_{B^{1,\f12}}\|a\|_{\widetilde{L}^\infty_t(B^{1-\ga,\f12+\ga})}
\\
& +\bigl(2^kd_kd_\ell(t)\|v^{\rm
h}(t)\|_{B^{1-\ga,\f12+\ga}}+d_{k,\ell}(t)\|v^{\rm
h}(t)\|_{B^{2-\ga,\f12+\ga}}\bigr)
\|a\|_{\widetilde{L}^\infty_t(B^{1,\f12})}\Bigr),
\end{split} \eeq
and \beq\label{o.12ad}
\begin{split}
\|\Delta_k^{\rm h}\Delta_\ell^{\rm v}\p_3(v^3 a)&(t)\|_{L^2}\leq
C2^{-k(1+\ga)}2^{-\ell\bigl(\f12-\ga\bigr)}\Bigl(
d_{k,\ell}2^\ell\|v^3(t)\|_{B^{1,\f12}}\|a\|_{\widetilde{L}^\infty_t(B^{1+\ga,\f12-\ga})}
\\
&+\bigl(2^\ell d_k(t)d_\ell\|v^{3}(t)\|_{B^{1+\ga,\f12-\ga}}
+d_{k,\ell}(t)\|v^{\rm
h}(t)\|_{B^{2+\ga,\f12-\ga}}\bigr)\|a\|_{\widetilde{L}^\infty_t(B^{1,\f12})}\Bigr),
\end{split} \eeq
and \beq\label{o.13ad}
\begin{split}
\|\Delta_k^{\rm h}\Delta_\ell^{\rm v}\dive_h(v^{\rm h}
a)(t)\|_{L^2}\leq C2^{-k(1+\ga)}2^{-\ell\bigl(\f12-\ga\bigr)}\Bigl(&
d_{k,\ell}2^k\|v^{\rm
h}(t)\|_{B^{1,\f12}}\|a\|_{\widetilde{L}^\infty_t(B^{1+\ga,\f12-\ga})}
\\
&+d_{k,\ell}(t)\|v^{\rm h}(t)\|_{B^{2+\ga,\f12-\ga}}
\|a\|_{\widetilde{L}^\infty_t(B^{1,\f12})}\Bigr).
\end{split} \eeq

Combining \eqref{o.12} with \eqref{o.13}, we conclude the proof of
\eqref{o.15} for $\Phi=0.$ Whereas by summing up \eqref{o.12ad} and
\eqref{o.13ad}, we achieve \eqref{o.15ad}.

On the other hand, since $\ga\in ]0,1[,$ one has \beno
\|S_{k'+2}^{\rm h}\D_{\ell'}^{\rm v}a\|_{L^\infty_t(L^\infty_{\rm
h}(L^2_{\rm v}))}\lesssim
d_{k,\ell}2^{k'(1-\ga)}2^{-\ell'\bigl(\f32-\ga\bigr)}\|a\|_{\wt{L}^\infty_t(B^{\ga,\f32-\ga})},
\eeno which ensures \beno
\begin{split}
\|\Delta_k^{\rm h}\Delta_\ell^{\rm v}\cR^{\rm h}T^{\rm
v}(v^3,a)(t)\|_{L^2}\lesssim &\sum_{\substack{k'\geq
k-N_0\\|\ell'-\ell|\leq 4}}\|\Delta_{k'}^{\rm h}S_{\ell'-1}^{\rm
v}v^3(t)\|_{L^2_{\rm h}(L^\infty_{\rm v})}\|S_{k'+2}^{\rm
h}\D_{\ell'}^{\rm v}a\|_{L^\infty_t(L^\infty_{\rm h}(L^2_{\rm v}))}\\
\lesssim
&d_{k,\ell}2^{-k\ga}2^{-\ell\bigl(\f32-\ga\bigr)}\|v^3(t)\|_{B^{1,\f12}}\|a\|_{\widetilde{L}^\infty_t(B^{\ga,\f32-\ga})}.
\end{split}
\eeno The same estimate holds for $T^{\rm h}T^{\rm v}(v^3,a)(t).$

While again due to $\dive v=0,$ one has \beno \begin{split}
\|S_{k'-1}^{\rm h}\D_{\ell'}^{\rm v}v^3(t)\|_{L^\infty_{\rm
h}(L^2_{\rm v})}\lesssim & 2^{-\ell'} \|S_{k'-1}^{\rm
h}\D_{\ell'}^{\rm v}\dive_{\rm
h}v^{\rm h}(t)\|_{L^\infty_{\rm h}(L^2_{\rm v})}\\
\lesssim
&d_{k,\ell}(t)2^{k'(1-\ga)}2^{-\ell'\bigl(\f52-\ga\bigr)}\|v^{\rm
h}(t)\|_{B^{1+\ga,\f32-\ga}},\end{split} \eeno from which, we deduce
\beno
\begin{split}
\|\Delta_k^{\rm h}\Delta_\ell^{\rm v}T^{\rm h}\cR^{\rm
v}(v^3,a)(t)\|_{L^2}\lesssim &\sum_{\substack{|k'- k|\leq
4\\\ell'\geq \ell-N_0}}\|S_{k'-1}^{\rm h}\D_{\ell'}^{\rm
v}v^3(t)\|_{L^\infty_{\rm h}(L^2_{\rm v})}\|\D_{k'}^{\rm
h}S_{\ell'+2}^{\rm v}a\|_{L^\infty_t(L^2_{\rm h}(L^\infty_{\rm v}))}\\
\lesssim
&d_{k,\ell}(t)2^{-k\ga}2^{-\ell\bigl(\f52-\ga\bigr)}\|v^3\|_{B^{1+\ga,\f32-\ga}}\|a\|_{\widetilde{L}^\infty_t(B^{1,\f12})}.
\end{split}
\eeno
 The same estimate holds for $\cR^{\rm h}\cR^{\rm
v}(v^3,a)(t).$ We thus obtain \beq\label{o.17}
\begin{split}
\|\Delta_k^{\rm h}\Delta_\ell^{\rm v}\p_3(v^3 a)(t)\|_{L^2}\leq
C2^{-k\ga}2^{-\ell\bigl(\f32-\ga\bigr)}\Bigl(&
d_{k,\ell}2^\ell\|v^3(t)\|_{B^{1,\f12}}\|a\|_{\widetilde{L}^\infty_t(B^{\ga,\f32-\ga})}
\\
& +d_{k,\ell}(t)\|v^{\rm
h}(t)\|_{B^{1+\ga,\f32-\ga}}\|a\|_{\widetilde{L}^\infty_t(B^{1,\f12})}\Bigr).
\end{split} \eeq
The same argument assures that \beq\label{o.18}
\begin{split}
\|\Delta_k^{\rm h}&\Delta_\ell^{\rm v}\dive_h(v^{\rm h}
a)(t)\|_{L^2}\leq C2^{-k\ga}2^{-\ell\bigl(\f32-\ga\bigr)}\Bigl(
d_{k,\ell}2^k\|v^{\rm
h}\|_{B^{1,\f12}}\|a\|_{\widetilde{L}^\infty_t(B^{\ga,\f32-\ga})}
\\
& +d_{k,\ell}(t)\bigl(\|v^{\rm
h}(t)\|_{B^{1,\f32}}\|a_\Phi\|_{\wt{L}^\infty_t(B^{1+\ga,\f12-\ga})}+\|v^{\rm
h}(t)\|_{B^{1+\ga,\f32-\ga}}
\|a\|_{\widetilde{L}^\infty_t(B^{1,\f12})}\bigr)\Bigr).
\end{split} \eeq
By summing up \eqref{o.17} and \eqref{o.18}, we complete the proof
of \eqref{o.16}, and also the lemma.
\end{proof}

With Lemma \ref{lem5.1}, we deduce from the proof of Proposition
\ref{prop:transport} that
\begin{prop}\label{prop5.2}
{\sl Let $a$ be a smooth enough solution of \eqref{eq:trans} on
$[0,T].$ Then under the assumptions of Lemma \ref{lem5.1}, for any $
t\in ]0,T[,$ we have \beq \label{o.19}
\begin{split}
\|a_\Phi\|_{\widetilde{L}^\infty_t(B^{1-\ga,\f12+\ga})}\le&
\|e^{\delta|D|}a_0\|_{B^{1-\ga,\f12+\ga}}
+\|f_\Phi\|_{L^1_t(B^{1-\ga,\f12+\ga})}+\f
C\lambda\bigl(\|a_\Phi\|_{\widetilde{L}^\infty_t(B^{1-\ga,\f12+\ga})}
\\
&+\|a_\Phi\|_{\widetilde{L}^\infty_t(B^{1,\f12})}\bigr)+C\ve^{1-\al}\|v_\Phi^{\rm
h}\|_{L^1_t(B^{2-\ga,\f12+\ga})}\|a_\Phi\|_{\widetilde{L}^\infty_t(B^{1,\f12})},
\end{split} \eeq
and
 \beq \label{o.19ad}
\begin{split}
\|a_\Phi\|_{\widetilde{L}^\infty_t(B^{1+\ga,\f12-\ga})}\le&
\|e^{\delta|D|}a_0\|_{B^{1+\ga,\f12-\ga}}
+\|f_\Phi\|_{L^1_t(B^{1+\ga,\f12-\ga})}+\f
C\lambda\bigl(\|a_\Phi\|_{\widetilde{L}^\infty_t(B^{1+\ga,\f12-\ga})}
\\
&+\|a_\Phi\|_{\widetilde{L}^\infty_t(B^{1,\f12})}\bigr)+C\ve^{1-\al}\|v_\Phi^{\rm
h}\|_{L^1_t(B^{2+\ga,\f12-\ga})}\|a_\Phi\|_{\widetilde{L}^\infty_t(B^{1,\f12})},
\end{split} \eeq and \beq \label{o.20}
\begin{split}
\|a_\Phi&\|_{\widetilde{L}^\infty_t(B^{\ga,\f32-\ga})}\le
\|e^{\delta|D|}a_0\|_{B^{\ga,\f32-\ga}}+\|f_\Phi\|_{L^1_t(B^{\ga,\f32-\ga})}
+\f C\lambda\|a_\Phi\|_{\widetilde{L}^\infty_t(B^{\ga,\f32-\ga})}\\
&\qquad+C\ve^{1-\al}\bigl(\|v^{\rm
h}_\Phi\|_{L^1_t(B^{1,\f32})}\|a_\Phi\|_{\wt{L}^\infty_t(B^{1+\ga,\f12-\ga})}+\|v_\Phi\|_{L^1_t(B^{1+\ga,\f32-\ga})}\|a_\Phi\|_{\widetilde{L}^\infty_t(B^{1,\f12})}\bigr).
\end{split} \eeq}
\end{prop}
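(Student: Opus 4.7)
The plan is to mirror the proof of Proposition~\ref{prop:transport}, with Lemma~\ref{lem5.1} replacing the role played there by \eqref{o.10}--\eqref{o.11}. Starting from the Duhamel-type identity \eqref{o.4} for $a_\Phi$, I would apply $\Delta_k^{\rm h}\Delta_\ell^{\rm v}$, take the $L^\infty_t(L^2)$ norm, insert the appropriate bound from Lemma~\ref{lem5.1}, and finally sum over $(k,\ell)$ with the weights corresponding to each target anisotropic Besov norm. Since $\theta(T)\le\delta/\lambda$ guarantees that the phase function $\Phi$ satisfies \eqref{subaddi}, the operator $e^{-\lambda\int_{t'}^t\dot\theta(\tau)d\tau|D|}$ acts as a bounded Fourier multiplier and the standard heat-kernel bookkeeping of Propositions~\ref{prop:transport} and~\ref{lem:analytic} is available.

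For \eqref{o.19} I would plug \eqref{o.15} into this scheme. The two linear pieces give exactly $\|e^{\delta|D|}a_0\|_{B^{1-\ga,\f12+\ga}}+\|f_\Phi\|_{L^1_t(B^{1-\ga,\f12+\ga})}$. The convective piece, after multiplication by $\ve^{1-\al}$ and integration against the exponential $e^{-c\lambda\int_{t'}^t\dot\theta(\tau)d\tau(2^k+2^\ell)}$, splits into three contributions corresponding to the three prefactors in \eqref{o.15}. For the time-frozen prefactor $d_{k,\ell}(2^k+2^\ell)\|v_\Phi(t')\|_{B^{1,\f12}}$, I use $\ve^{1-\al}\|v_\Phi(t')\|_{B^{1,\f12}}\le\dot\theta(t')$ from \eqref{g.4} and the argument of Lemma~\ref{lem:analytic} to extract a $C/\lambda$ factor multiplied by $\|a_\Phi\|_{\wt L^\infty_t(B^{1-\ga,\f12+\ga})}$. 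For the prefactor $d_k d_\ell(t')2^k\|v_\Phi^{\rm h}(t')\|_{B^{1-\ga,\f12+\ga}}$, I sum the time-dependent $d_\ell(t')$ in $\ell^1$, apply $\ve^{1-\al}\|v_\Phi^{\rm h}(t')\|_{B^{1-\ga,\f12+\ga}}\le\dot\theta(t')$ (again from \eqref{g.4}), and invoke the exponential-in-$2^k$ damping to produce another $C/\lambda\|a_\Phi\|_{\wt L^\infty_t(B^{1,\f12})}$. The remaining piece with prefactor $d_{k,\ell}(t')\|v_\Phi^{\rm h}(t')\|_{B^{2-\ga,\f12+\ga}}$ requires no exponential gain and integrates directly in $t'$ to $C\ve^{1-\al}\|v_\Phi^{\rm h}\|_{L^1_t(B^{2-\ga,\f12+\ga})}\|a_\Phi\|_{\wt L^\infty_t(B^{1,\f12})}$.

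Estimates \eqref{o.19ad} and \eqref{o.20} are obtained by exactly the same three-way splitting, using \eqref{o.15ad} and \eqref{o.16} respectively in place of \eqref{o.15}. The only extra wrinkle in \eqref{o.20} is that the paraproduct remainder $T^{\rm h}\cR^{\rm v}$ appearing in Lemma~\ref{lem5.1} redistributes one vertical derivative onto $a$, which forces the cross-term $\|v^{\rm h}_\Phi\|_{L^1_t(B^{1,\f32})}\|a_\Phi\|_{\wt L^\infty_t(B^{1+\ga,\f12-\ga})}$ that couples to the $B^{1+\ga,\f12-\ga}$-bound of $a_\Phi$ supplied by \eqref{o.19ad}. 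I expect the main obstacle to be bookkeeping rather than any new analytical input: for each of the three dyadic pieces produced by Lemma~\ref{lem5.1} one must correctly identify whether the prefactor is time-frozen $d_{k,\ell}$ (in which case the mechanism of Lemma~\ref{lem:analytic} extracts a $1/\lambda$ gain by recognizing a hidden $\dot\theta(t')$ inside $\ve^{1-\al}\|v_\Phi\|$ through \eqref{g.4}) or time-dependent $d_{k,\ell}(t')$ (in which case one simply takes a plain $L^1_t$ norm of $v_\Phi^{\rm h}$ in a higher-regularity Besov space); once this dichotomy is carefully set up term by term, summing in $(k,\ell)$ with the target weights is purely mechanical and yields \eqref{o.19}--\eqref{o.20}.
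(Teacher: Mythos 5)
Your proposal is correct and follows exactly the same route as the paper: the paper itself gives only the one-line justification ``With Lemma~\ref{lem5.1}, we deduce from the proof of Proposition~\ref{prop:transport} that,'' i.e.\ plug the dyadic bounds \eqref{o.15}--\eqref{o.16} into the Duhamel-type identity \eqref{o.4}, split according to whether the sequence prefactor is time-frozen ($d_{k,\ell}$, $d_k d_\ell(t)$, $d_k(t)d_\ell$, handled by the exponential damping and $\ve^{1-\al}\|v_\Phi\|_{B^{1,1/2}}\lesssim\dot\theta$ as in Lemma~\ref{lem:analytic}) or fully time-dependent ($d_{k,\ell}(t)$, handled by a plain $L^1_t$ integration), and sum in $(k,\ell)$ with the target weights. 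The only slip is the attribution of the cross-term $\|v^{\rm h}_\Phi\|_{L^1_t(B^{1,\f32})}\|a_\Phi\|_{\wt L^\infty_t(B^{1+\ga,\f12-\ga})}$ to $T^{\rm h}\cR^{\rm v}(v^3,a)$; it in fact comes from the estimate \eqref{o.18} for $\dive_{\rm h}(v^{\rm h}a)$, but this does not affect the argument.
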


\setcounter{equation}{0}
\section{Elliptic estimates in the analytical class}
In this section, we present the estimates of the pressure function
in the analytical class. Recall that the re-scaled pressure function
$q$ satisfies \beq\label{l.14} -\textrm{div}\Big(\f 1
{1+\ve^{\be}a}\na^\ve q\Big) =\ve^{1-\al}\textrm{div}(v\cdot\na
v)-\textrm{div}\Big(\f 1 {1+\ve^{\be}a}\Delta_\ve v\Big). \eeq In
the sequel, we always denote $G(r)\eqdefa\f {r} {1+r},$ and
$\theta(t), \Phi(t), \Psi(t)$ to be given by \eqref{g.4},
\eqref{def:phase} and \eqref{psias} respectively. Moreover, we
always assume that $\theta(T)\leq\f{\delta}{\lambda}. $
\begin{prop}\label{prop6.1}
{\sl Let $\al\in \bigl]0,\f12\bigr[, \be>\al$ and $0<\ga\le
\f12\min\bigl(\be-\al,1-2\al\bigr)$. Then there exists a positive
constant $C_0$ such that for $\epsilon$ given by \eqref{k.50}, if
$a$ satisfies \beq\label{l.14ad}
\|a_\Phi\|_{\widetilde{L}^\infty_t(B^{1,\f12})}\le K\andf \ve\leq
\min\Bigl(\Bigl(\f1{2C_0K}\Bigr)^{\f 1{\be-\ga}}, \Bigl(\f{\epsilon}{K}\Bigr)^{\f1{\be}}\Bigr),
\eeq we have \beq\label{l.15}
\begin{split} \ve^{1-\al}\|q\|_{Y_t}\le &
C\max\bigl(\ve^{\be-\al-2\ga},
\ve^{1-2\al-2\ga}\bigr)\theta(t)\Psi(t)\with\\
 \|q\|_{Y_t}\eqdefa &\|\na_{\rm h}
q_\Phi\|_{L^1_t(B^{-1,\f12})}+\|\na_\ve
q_\Phi\|_{L^1_t(B^{-1+\ga,\f12-\ga})}. \end{split} \eeq }
\end{prop}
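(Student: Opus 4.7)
The plan is to rewrite the elliptic equation \eqref{l.14} using $\tfrac1{1+\ve^\be a}=1-G(\ve^\be a)$, together with $\dive v=0$ (which gives $\dive(\Delta_\ve v)=0$), in the form
\begin{equation*}
-\Delta_\ve q=\ve^{1-\al}\dive(v\cdot\na v)+\dive\bigl(G(\ve^\be a)\Delta_\ve v\bigr)-\dive\bigl(G(\ve^\be a)\na^\ve q\bigr),
\end{equation*}
and to split $q=q_1+q_2+q_3$ accordingly: $q_1$ from the convective term, $q_2$ from the variable-viscosity term, and $q_3$ from the pressure self-interaction. After applying $e^{\Phi(t,D)}$ and using \eqref{k.3ad} to pass to the Fourier-positive counterparts, the estimate of $\|q\|_{Y_t}$ reduces, via the observation that $\na_{\rm h}\dive(-\Delta_\ve)^{-1}$ and $\na^\ve\dive(-\Delta_\ve)^{-1}$ are uniformly bounded $0$-order Fourier multipliers on the dyadic blocks $\Delta_k^{\rm h}\Delta_\ell^{\rm v}$, to estimating the three source terms in $L^1_t(B^{0,\f12})+L^1_t(B^{\ga,\f12-\ga})$-type norms.

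For the convective piece $q_1$, I apply Corollary \ref{col3.1} to $[v\cdot\na v]_\Phi$, choosing the exponent pairs so that one factor is a component of $\dot\theta$ with weight $\ve^{1-\al}$ or $\ve^\ga$ from \eqref{g.4} (which, after integration in time, yields $\theta(t)$), and the other factor is a component of $\Psi_2(t)$. Splitting $v\cdot\na v=v^{\rm h}\cdot\na_{\rm h}v+v^3\p_3 v$ and tracking the weights, one checks that the overall prefactor $\ve^{1-\al}$ combined with the implicit smallness in $\dot\theta$ is dominated by $\ve^{1-2\al-2\ga}\theta(t)\Psi(t)$ when $\ga\le\tfrac{1-2\al}{4}$. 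For the variable-viscosity piece $q_2$, Lemma \ref{lem:composition} (applicable because $\ve^\be K\le\epsilon$) gives $\|[G(\ve^\be a)]_\Phi\|_{\wt L^\infty(B^{\s,s})}\le 2\ve^\be\|a_\Phi\|_{\wt L^\infty(B^{\s,s})}$, and pairing this against $\Delta_\ve v$ via Corollary \ref{col3.1}, while absorbing the $L^1_t$-derivative of $v$ into the $\ve^{2\al+2\ga}$-weighted parts of $\Psi_3$, produces the prefactor $\ve^{\be-2\al-2\ga}$ from $\Delta_{\rm h}v$ and $\ve^{\be-\al-2\ga}$ from $\ve^2\p_3^2 v$; under $\be>\al$ the maximum is $\ve^{\be-\al-2\ga}$, which after multiplication by $\ve^{1-\al}$ is exactly the first contribution in \eqref{l.15}.

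The main obstacle is the self-interaction $q_3$. Here Corollary \ref{col3.1} combined with Lemma \ref{lem:composition} yields
\begin{equation*}
\|[G(\ve^\be a)\na^\ve q]_\Phi\|_{L^1_t(B^{-1+\ga,\f12-\ga})}\le 2C\ve^\be K\,\|\na^\ve q_\Phi\|_{L^1_t(B^{-1+\ga,\f12-\ga})}\le 2C\ve^\be K\,\|q\|_{Y_t},
\end{equation*}
and analogously for the $\na_{\rm h}$-component. Hence $\ve^{1-\al}\|q_3\|_{Y_t}\le 2C\ve^{1-\al+\be}K\,\|q\|_{Y_t}$, and the hypothesis $\ve^{\be-\ga}\le 1/(2C_0 K)$, with $C_0$ chosen to dominate the product-law constant, allows this term to be absorbed into the left-hand side. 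The delicate structural point justifying the precise shape of $\|q\|_{Y_t}$ is that the target $B^{-1,\f12}$ does \emph{not} close the above bilinear estimate (Corollary \ref{col3.1} requires $\s_1+\s_2>0$, which forces the $\ga$-shift); the $B^{-1+\ga,\f12-\ga}$ slot in $\|q\|_{Y_t}$ exists for exactly this reason. Summing the bounds for $q_1,q_2,q_3$ concludes the proof of \eqref{l.15}.
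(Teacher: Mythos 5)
Your proposal identifies one genuine structural point correctly (the failure of the product law in $B^{-1,\f12}$, necessitating the $\ga$-shift and the extra slot in $\|q\|_{Y_t}$), but it contains a fatal error and a second gap that together make the argument collapse.

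First, the claim that $\na_{\rm h}\dive(-\Delta_\ve)^{-1}$ and $\na^\ve\dive(-\Delta_\ve)^{-1}$ are uniformly bounded $0$-order Fourier multipliers is false. The symbol of $\na_{\rm h}\dive(-\Delta_\ve)^{-1}$ contains the component $\xi_{\rm h}\xi_3/(|\xi_{\rm h}|^2+\ve^2\xi_3^2)$, whose supremum over $\xi$ is of order $\ve^{-1}$ (attained where $\ve|\xi_3|\sim|\xi_{\rm h}|$). The anisotropic degeneracy of $(-\Delta_\ve)^{-1}$ is precisely the main enemy in this problem, and the whole architecture of the paper's proof is built to control the resulting negative powers of $\ve$. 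Concretely, the paper never writes $\dive(v\cdot\na v)$ as a single $0$-order source: it expands
\[
\dive(v\cdot\na v)=\dive_{\rm h}\dive_{\rm h}(v^{\rm h}\otimes v^{\rm h})+2\p_3\dive_{\rm h}(v^3v^{\rm h})-2\p_3(v^3\dive_{\rm h}v^{\rm h}),
\]
using $\p_3 v^3=-\dive_{\rm h}v^{\rm h}$ to eliminate the $\p_3^2$ contribution (which would cost $\ve^{-2}$). Each of the five resulting pieces $q_1,\dots,q_5$ is then treated separately, with the precise $\ve$-power loss of the attached multiplier tracked explicitly ($\ve^{-\al}$ for $q_3$, $\ve^{-\al-\ga}$ for $q_4$, $\ve^{-1+\be-\ga}$ for $q_{5,3}$, etc.). None of these $\ve$-losses appear in your argument, so your exponent bookkeeping is incorrect from the outset.

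Second, you propose to absorb the $L^1_t$-derivative norms of $v$ into $\Psi_3$, but the statement you must prove has the form $\theta(t)\Psi(t)$, not $\Psi^2(t)$. This is not cosmetic: $\theta(t)$ and $\Psi_3(t)$ control $L^1_t$-velocity norms at different regularity levels ($\dot\theta$ involves $B^{1+\ga,\f12-\ga}$ while $\Psi_3$ involves $B^{2+\ga,\f12-\ga}$, weighted differently in $\ve$), and in the bootstrap (\eqref{Tstar}--\eqref{epthm}) $\theta$ is small of order $\ve^\ga\|v_0\|_{X_2}$ whereas $\Psi$ is merely bounded by $K_0=O(1)$. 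The product estimates for $q_2,\dots,q_5$ have to be arranged so that exactly one factor lands at the $\dot\theta$-regularity level (hence integrates to $\theta$) and the other lands in $\Psi_1$ or $\Psi_2$; Corollary \ref{col3.1} is applied with this end in view, not to the higher-order $\Psi_3$ norms. Finally, in the self-interaction bound your intermediate inequality carries $\ve^\be$ but your absorption hypothesis is $\ve^{\be-\ga}K\le 1/(2C_0)$; the missing $\ve^{-\ga}$ comes from the shift trick \eqref{o.7} applied to the $\na_{\rm h}q_\Phi\in L^1_t(B^{-1,\f12})$ slot, which you acknowledged conceptually but did not execute in the display.
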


\begin{proof}\, In view  of \eqref{l.14} and $\dive v=0,$ we write \beq\label{l.2}
\begin{split} q=&-(-\Delta_\ve)^{-1}\na_\ve\cdot\bigl(G(\ve^{\be}a)\na_\ve q\bigr)+\ve^{1-\al}(-\Delta_\ve)^{-1}
\textrm{div}_{\rm h}\textrm{div}_{\rm h}(v^{\rm h}\otimes v^{\rm h})\\
&+2\ve^{1-\al}(-\Delta_\ve)^{-1}\pa_3\textrm{div}_{\rm h}(v^3v^{\rm h})-2\ve^{1-\al}(-\Delta_\ve)^{-1}\pa_3(v^3\textrm{div}_hv^{\rm h})\\
&+(-\Delta_\ve)^{-1}\textrm{div}\bigl(G(\ve^{\be}a)\Delta_\ve
v\bigr)\eqdefa q_1+\cdots+q_5.
\end{split} \eeq

To avoid the difficulty of product laws in the Bessov space
$B^{-1}_{2,1}(\R^2),$ we write \beq \label{o.7}
\begin{split}
\|\na_{\rm
h}&[q_1]_\Phi\|_{L^1_t(B^{-1,\f12})}\\
=&\ve^{-\ga}\bigl\||D_{\rm
h}|^{-\ga}|\ve D_3|^{\ga}\na_{\rm h}(-\D_\ve)^{-1}\na_\ve\cdot|D_{\rm
h}|^{\ga}| D_3|^{-\ga}\bigl[G(\ve^\be a)\na_\ve
q\bigr]_\Phi\bigr\|_{L^1_t(B^{-1,\f12})}\\
\leq & C\ve^{-\ga}\bigl\||D_{\rm
h}|^{\ga}|D_3|^{-\ga}\bigl[G(\ve^\be a)\na_\ve
q\bigr]_\Phi\bigr\|_{L^1_t(B^{-1,\f12})}\\
\leq & C\ve^{-\ga}\big\|\bigl[G(\ve^\be a)\na_\ve
q\bigr]_\Phi\big\|_{L^1_t(B^{-1+\ga,\f12-\ga})},
\end{split}
\eeq where $|D_{\rm h}|$ and $|D_3|$ denote the Fourier multipliers
with symbols $|\xi_{\rm h}|=\sqrt{\xi_1^2+\xi_2^2}$ and $|\xi_3|$
respectively. In what follows, we shall frequently use this kind of
tricks to deal with the estimate of the pressure function.

In view of \eqref{o.7}, if $\ve$ is so small that $\ve^\be K\leq
\epsilon,$ we get, by applying Corollary \ref{col3.1} and Lemma
\ref{lem:composition}, that
\begin{align*}
\|q_1\|_{Y_t}&\le C\ve^{-\ga}\bigl\|\big[G(\ve^\be a)\na_\ve q\big]_\Phi\bigr\|_{L^1_t(B^{-1+\ga,\f12-\ga})}\\
&\le
C\ve^{\be-\ga}\|a_\Phi\|_{\widetilde{L}^\infty_t(B^{1,\f12})}\|\na_\ve
q_\Phi\|_{{L}^1_t(B^{-1+\ga,\f12-\ga})}.
\end{align*}
Applying the law of product of  Corollary \ref{col3.1} gives
\begin{align*}
\|q_2\|_{Y_t} &\le C\ve^{1-\al}\Bigl(\big\|\big[v^{\rm h}v^{\rm
h}\big]_\Phi\big\|_{L^1_t(B^{0,\f12})}
+\big\|\big[v^{\rm h}v^{\rm h}\big]_\Phi\bigr\|_{L^1_t(B^{\ga,\f12-\ga})}\Bigr)\\
&\le C\ve^{1-\al}\|v^{\rm h}_\Phi\|_{L^1_t(B^{1,\f12})}\big(\|v^{\rm
h}_\Phi\|_{\widetilde{L}^\infty_t(B^{\ga,\f12-\ga})}+\|v^{\rm
h}_\Phi\|_{\widetilde{L}^\infty_t(B^{0,\f12})}\big),
\end{align*}
and
\begin{align*}
\|q_3\|_{Y_t} &\le C\ve^{-\al}\Bigl(\|\big[v^3v^{\rm h}\big]_\Phi\big\|_{L^1_t(B^{0,\f12})}+\big\|\big[v^3v^{\rm h}\big]_\Phi\big\|_{L^1_t(B^{\ga,\f12-\ga})}\Bigr)\\
&\le C\ve^{-\al}\|v^3_\Phi\|_{L^1_t(B^{1,\f12})}\big(\|v^{\rm
h}_\Phi\|_{\widetilde{L}^\infty_t(B^{\ga,\f12-\ga})}+\|v^{\rm
h}_\Phi\|_{\widetilde{L}^\infty_t(B^{0,\f12})}\big).
\end{align*}
Whereas we get, by  applying first the similar trick as that in
\eqref{o.7} and then Corollary \ref{col3.1}, that
\begin{align*}
\|q_4\|_{Y_t}\le C\ve^{-\al-\ga}\big\|\big[v^3\textrm{div}_hv^{\rm
h}\big]_\Phi\|_{L^1_t(B^{-1+\ga,\f12-\ga})}\le
C\ve^{-\al-\ga}\|v^3_\Phi\|_{L^1_t(B^{1,\f12})}\|v^{\rm
h}_\Phi\|_{\widetilde{L}^\infty_t(B^{\ga,\f12-\ga})}.
\end{align*}

To handle $q_5$ in \eqref{l.2}, we  split it further  as
\beq\label{eq:q5}\begin{split}
q_5=&(-\Delta_\ve)^{-1}\textrm{div}_{\rm
h}\bigl(G(\ve^{\be}a)\Delta_{\rm h}v^{\rm h}\bigr)
+(-\Delta_\ve)^{-1}\textrm{div}_{\rm h}\bigl(G(\ve^{\be}a)\ve^2\pa_3^2v^{\rm h}\bigr)\\
&+(-\Delta_\ve)^{-1}\pa_3\bigl(G(\ve^{\be}a)\Delta_{\rm
h}v^3\bigr)+(-\Delta_\ve)^{-1}\pa_3\bigl(G(\ve^{\be}a)\ve^2\pa_3^2v^3\bigr)\\
\eqdefa &q_{5,1}+\cdots+q_{5,4}.
\end{split}\eeq
Similar to the estimate of $q_1,$ one has \beno
&&\|q_{5,1}\|_{Y_t}\le
C\ve^{\be-\ga}\|a_\Phi\|_{\widetilde{L}^\infty_t(B^{1,\f12})}\|v^{\rm
h}_\Phi\|_{L^1_t(B^{1+\ga,\f12-\ga})},\\
&&\|q_{5,3}\|_{Y_t}\le
C\ve^{-1+\be-\ga}\|a_\Phi\|_{\widetilde{L}^\infty_t(B^{1,\f12})}\|v^3_\Phi\|_{L^1_t(B^{1+\ga,\f12-\ga})}.
\eeno While note that
$$\longformule{ (-\Delta_\ve)^{-1}\textrm{div}_{\rm h}\big(G(\ve^\be
a)\ve^2\pa_3^2v^{\rm h}\big)}{{}=\ve^{-1+\delta}|D_{\rm
h}|^{-1+\delta}|\ve
D_3|^{1-\delta}(-\Delta_\ve)^{-1}\textrm{div}_{\rm h}|D_{\rm
h}|^{1-\delta}|D_3|^{-1+\delta}\big(G(\ve^\be a)\ve^2\pa_3^2v^{\rm
h}\big), } $$
for $\delta$ equals $\ga$ and $2\ga,$  we infer \beno
\begin{split}
\|[q_{5,2}]_\Phi\|_{Y_t}\le & \ve^{-1+\ga}\big\|\bigl[G(\ve^\be
a)\ve^2\p_3^2v^{\rm h}\bigr]_\Phi\big\|_{L^1_t(B^{-\ga,-\f12+\ga})}\\
\le &
C\ve^{1+\be+\ga}\|a_\Phi\|_{\widetilde{L}^\infty_t(B^{1,\f12})}\|v^{\rm
h}_\Phi\|_{L^1_t(B^{-\ga,\f32+\ga})}.
\end{split}
\eeno By a similar manner and using $\dive v=0,$ one has
\begin{align*}
\|q_{54}\|_{Y_t}\le &C\ve^{1-\ga}\big\|\bigl[G(\ve^\be
a)\p_3\dive_{\rm h}v^{\rm h}\bigr]_\Phi\big\|_{L^1_t(B^{-1+\ga,\f12-\ga})}\\
\leq &
C\ve^{1+\be-\ga}\|a_\Phi\|_{\widetilde{L}^\infty_t(B^{1,\f12})}\|v^{\rm
h}_\Phi\|_{L^1_t(B^{\ga,\f32-\ga})}.
\end{align*}

By summing up the above estimates,  we arrive at
\begin{align*}
\ve^{1-\al}\|q\|_{Y_t}\le &
C\ve^{\be-\al}\|a_\Phi\|_{\widetilde{L}^\infty_t(B^{1,\f12})}\Bigl(\ve^{1-\ga}\bigl(\|q\|_{Y_t}+\|v^{\rm
h}_\Phi\|_{L^1_t(B^{1+\ga,\f12-\ga})}\bigr)\\
&+\ve^{-\ga}\|v^{3}_\Phi\|_{L^1_t(B^{1+\ga,\f12-\ga})}+\ve^{2-\ga}\|v^{\rm
h}_\Phi\|_{L^1_t(B^{\ga,\f32-\ga})}+\ve^{2}\|v^{\rm
h}_\Phi\|_{L^1_t(B^{-\ga,\f32+\ga})}\Bigr)\\
&+C\ve^{1-2\al}\Bigl(\ve\|v^{\rm
h}_\Phi\|_{L^1_t(B^{1,\f12})}+\ve^{-\ga}\|v^{3}_\Phi\|_{L^1_t(B^{1,\f12})}\Bigr)\\
&\qquad\qquad\qquad\qquad\quad\qquad\qquad\times\Bigl(\|v^{\rm
h}_\Phi\|_{\wt{L}^\infty_t(B^{\ga,\f12-\ga})}+\|v^{\rm
h}_\Phi\|_{\wt{L}^\infty_t(B^{0,\f12})}\Bigr).
\end{align*}
While we get, by applying Lemma \ref{lem3.1}, that \beno
\ve^{2+\be-\al-\ga}\|v^{\rm h}_\Phi\|_{L^1_t(B^{\ga,\f32-\ga})}\leq
C\ve^{\be-2\ga} \ve^{2-\al +\ga}\bigl(\|v^{\rm
h}_\Phi\|_{L^1_t(B^{-\ga,\f32+\ga})}+\|v^{\rm
h}_\Phi\|_{L^1_t(B^{1+\ga,\f12-\ga})}\bigr). \eeno Then  due to the
assumptions  of $\al,\be,\ga$ in the proposition, \eqref{l.15}
follows by choosing $\epsilon$ suitably small in \eqref{l.14ad}.
\end{proof}

\begin{prop}\label{prop6.2}
{\sl Let $\al\in ]0,1[, \be>\al$ and $0<\ga\le \min\bigl(\f
{\be-\al} 4, \f {1-\al} 3\bigr)$. Then there exists some positive
constant $C_0$ such that for $\epsilon$ given by \eqref{k.50}, if
$a$ satisfies \beq\label{l.1before}
\|a_\Phi\|_{\widetilde{L}^\infty_t(B^{1,\f12})}+\|a_\Phi\|_{\widetilde{L}^\infty_t(B^{1-\ga,\f12+\ga})}\le
K \andf \ve^\be\leq \min\Bigl(\f1{2C_0K},\f{\epsilon}K\Bigr), \eeq
there holds \beq\label{l.1} \ve^{2\al+\ga}\|q\|_{Z_t} \le
C\Psi^2(t)\with \|q\|_{Z_t}\eqdefa\|\na_\ve
q_\Phi\|_{L^1_t(B^{\ga,\f12-\ga})} + \|\na_\ve
q_\Phi\|_{L^1_t(B^{-\ga,\f12+\ga})}. \eeq}
\end{prop}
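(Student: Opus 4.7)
\medbreak
\noindent\textbf{Proof proposal.} The plan is to follow the same strategy as the proof of Proposition \ref{prop6.1}: start from the decomposition $q=q_1+\cdots+q_5$ of \eqref{l.2} and bound $\|\na_\ve[q_i]_\Phi\|$ in both $L^1_t(B^{\ga,\f12-\ga})$ and $L^1_t(B^{-\ga,\f12+\ga})$. The main tools are Corollary \ref{col3.1} (anisotropic product law), Lemma \ref{lem:composition} (for the composition $G(\ve^\be a)$), Lemma \ref{lem3.1} (interpolation on the critical line $\s+s=\f12$), and the Fourier-multiplier trick from \eqref{o.7} that converts between the two anisotropic scalings at the cost of a factor $\ve^{-\ga}$.

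For $q_1=-(-\D_\ve)^{-1}\na_\ve\cdot(G(\ve^\be a)\na_\ve q)$, the symbol $\xi_\ve\otimes\xi_\ve/|\xi|_\ve^2$ is a bounded zero-order multiplier. In the $(\ga,\f12-\ga)$-piece, Corollary \ref{col3.1} applies directly and, combined with Lemma \ref{lem:composition}, yields a bound of the form $C\ve^\be\|a_\Phi\|_{\wt{L}^\infty_t(B^{1,\f12})}\|q\|_{Z_t}\le C\ve^\be K\|q\|_{Z_t}$, which is absorbed on the left under $\ve^\be K\le 1/(2C_0)$. The $(-\ga,\f12+\ga)$-piece is more delicate because its vertical regularity $\f12+\ga$ exceeds $\f12$ and so falls outside the direct applicability of Corollary \ref{col3.1}; one handles it by interpolating via Lemma \ref{lem3.1} on the line $\s+s=\f12$ (writing $B^{-\ga,\f12+\ga}$ between $B^{0,\f12}$ and other endpoints with the same index sum) and using the \emph{extra} hypothesis $\|a_\Phi\|_{\wt{L}^\infty_t(B^{1-\ga,\f12+\ga})}\le K$ from \eqref{l.1before}, which is precisely what is added to the statement compared with Proposition \ref{prop6.1}.

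For $q_2,q_3,q_4$, one applies Corollary \ref{col3.1} in the $\wt{L}^2_t$-based configuration to split the quadratic nonlinearities $v^{\rm h}\otimes v^{\rm h}$ and $v^3v^{\rm h}$ into factors controlled by $\Psi_4(t)$. Accounting for the $\ve^{\al+\ga}$-prefactor carried by horizontal components in $\Psi_4$ and the absence of such a prefactor for the vertical one, each resulting bound, once multiplied by $\ve^{2\al+\ga}$, is of the form $C\ve^m\Psi^2(t)$ with $m\ge 0$ under $\ga\le\f{1-\al}3$. For $q_5$, we use the subdecomposition \eqref{eq:q5}; the purely horizontal pieces $q_{5,1},q_{5,3}$ give, via the product law, bounds of type $C\ve^\be\Psi_1(t)\cdot\ve^{-2\al-2\ga}\Psi_3(t)$, hence $C\ve^{\be-\ga}\Psi^2(t)$ after multiplying by $\ve^{2\al+\ga}$, which is admissible since $\be>\ga$. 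The $\ve^2\p_3^2$-pieces $q_{5,2},q_{5,4}$ carry additional $\ve$-powers and are strictly easier, the trick \eqref{o.7} being invoked wherever the mismatch between $\na_{\rm h}$ and $\ve^2\p_3$ forces one to shift between the $(\ga,\f12-\ga)$- and $(-\ga,\f12+\ga)$-scalings.

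The hard part is the $\ve$-bookkeeping together with the systematic treatment of the $L^1_t(B^{-\ga,\f12+\ga})$-component of $\|q\|_{Z_t}$: because $\f12+\ga>\f12$, neither configuration of Corollary \ref{col3.1} applies to a product in which this vertical regularity appears on a factor, and one must systematically combine Lemma \ref{lem3.1} with the additional analytic regularity of $a_\Phi$ in $B^{1-\ga,\f12+\ga}$ to reduce matters to the $(\ga,\f12-\ga)$-setting. The constraints $\be>\al$ and $\ga\le\min(\f{\be-\al}4,\f{1-\al}3)$ emerge precisely as the requirement that every power of $\ve$ produced in the above accounting is nonnegative, so that all terms fit into $C\Psi^2(t)$ uniformly in $\ve$.
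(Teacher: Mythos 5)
Your overall strategy---starting from the decomposition $q=q_1+\cdots+q_5$ of \eqref{l.2} and bounding each piece in both components of $\|q\|_{Z_t}$, absorbing the $q_1$ contribution on the left---matches the paper, and your identification of where the extra hypothesis $\|a_\Phi\|_{\wt{L}^\infty_t(B^{1-\ga,\f12+\ga})}\le K$ must be used is on the right track. But there are two genuine gaps.

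First, your description of the $q_1$ estimate is off on a technical point. You claim that the $L^1_t(B^{-\ga,\f12+\ga})$ component falls outside the direct applicability of Corollary \ref{col3.1} because the vertical index $\f12+\ga$ exceeds $\f12$. That is not the case: the first configuration of Corollary \ref{col3.1} only requires $s_1,s_4\le\f12$ (the other two vertical indices are unconstrained), so with $(\s_1,s_1)=(1,\f12)$, $(\s_2,s_2)=(-\ga,\f12+\ga)$ and $(\s_3,s_3)=(1-\ga,\f12+\ga)$, $(\s_4,s_4)=(0,\f12)$ all constraints are met, and the corollary applies directly. The role of Lemma \ref{lem3.1} here is only to dominate $\|\na_\ve q_\Phi\|_{L^1_t(B^{0,\f12})}$ by the two quantities in $\|q\|_{Z_t}$; it is not used to reduce the $(-\ga,\f12+\ga)$ scaling to the $(\ga,\f12-\ga)$ one. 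This is a misreading of the mechanism, though not fatal on its own.

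The second gap is fatal as written: you claim $q_3$ and $q_4$ are dispatched by a straightforward application of Corollary \ref{col3.1} (in $\wt{L}^2_t$ form). Both of these terms carry the operator $(-\D_\ve)^{-1}\p_3$, and while $\na_\ve(-\D_\ve)^{-1}\p_3\dive_{\rm h}$ costs one vertical derivative, the resulting estimate $\|\na_\ve q_3\|_{B^{\ga,\f12-\ga}}\lesssim\ve^{1-\al}\|v^3v^{\rm h}\|_{B^{\ga,\f32-\ga}}$ lands you in a space with vertical regularity $\f32-\ga>\f12$, and applying the product law there would force a factor like $\ve^2\|v^{\rm h}_\Phi\|_{L^1_t(B^{-\ga,\f52+\ga})}$ from $\Psi_3$ without a compensating $\ve^2$: the bookkeeping produces a negative power of $\ve$ even after multiplying by $\ve^{2\al+\ga}$. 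The paper's way around this is to perform a vertical Bony decomposition, splitting $q_3=q_{31}+q_{32}$ and $q_4=q_{41}+q_{42}$ (see \eqref{p.3}, \eqref{p.4}). For the $\cR^{\rm v}$ (remainder) pieces $q_{32},q_{42}$, the crucial step is to exploit $\dive v=0$, writing $\D_\ell^{\rm v}v^3=-2^{-\ell}\wt{\varphi}(2^{-\ell}D_3)\D_\ell^{\rm v}\dive_{\rm h}v^{\rm h}$ to trade one vertical derivative for a horizontal one, which puts the estimate back into the admissible range and lands in $\Psi_2\Psi_3$. For the $T^{\rm v}$ (para-product) pieces $q_{31},q_{41}$, the $\wt{L}^2_t\times\wt{L}^2_t$ product law is then applied, giving $\Psi_4^2$ with the correct coefficient $\ve^{-2\al-\ga}$. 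Without this split and the $\dive v=0$ trick, the estimate does not close. Your proposal omits both.
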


\begin{proof}\, Following the same line to the  proof of Proposition \ref{prop6.1}, we
shall split the proof of \eqref{l.1} into the following steps:

$\bullet$ \underline{Estimate of $\na_\ve q_1$}

By virtue of \eqref{l.2}, we get, by applying Corollary
\ref{col3.1}, that \beno \begin{split}\|\na_\ve
[q_1]_\Phi\|_{L^1_t(B^{\ga,\f12-\ga})}\lesssim \|[G(\ve^\beta
a)\na_\ve q]_\Phi\| _{L^1_t(B^{\ga,\f12-\ga})}\lesssim &
\|[G(\ve^\beta a)]_\Phi\|_{\wt{L}^\infty_t(B^{1,\f12})}\|\na_\ve
q_\Phi\| _{L^1_t(B^{\ga,\f12-\ga})}, \end{split} \eeno and \beno
\begin{split} \|\na_\ve &
[q_1]_\Phi\|_{L^1_t(B^{-\ga,\f12+\ga})}\lesssim \|[G(\ve^\beta
a)\na_\ve q]_\Phi\| _{L^1_t(B^{-\ga,\f12+\ga})}\\
&\lesssim  \|[G(\ve^\beta
a)]_\Phi\|_{\wt{L}^\infty_t(B^{1,\f12})}\|\na_\ve q_\Phi\|
_{L^1_t(B^{-\ga,\f12+\ga})}+ \|[G(\ve^\beta
a)]_\Phi\|_{\wt{L}^\infty_t(B^{1-\ga,\f12+\ga})}\|\na_\ve q_\Phi\|
_{L^1_t(B^{0,\f12})}. \end{split} \eeno While it follows from Lemma
\ref{lem3.1} that \beno \|\na_\ve q_\Phi\|
_{L^1_t(B^{0,\f12})}\lesssim \|\na_\ve q_\Phi\|
_{L^1_t(B^{\ga,\f12-\ga})}+ \|\na_\ve q_\Phi\|
_{L^1_t(B^{-\ga,\f12+\ga})}. \eeno Therefore, if $\ve$ is so small
that $\ve^\be K\leq \epsilon,$ by applying Lemma
\ref{lem:composition}, we obtain \beq\label{l.3} \|q_1\|_{Z_t}\le
C\ve^\be\big(\|a_\Phi\|_{\widetilde{L}^\infty_t(B^{1,\f12})}+\|a_\Phi\|_{\widetilde{L}^\infty_t(B^{1-\ga,\f12+\ga})}\big)\|q\|_{Z_t}.
\eeq

$\bullet$ \underline{Estimate of $\na_\ve q_2$}

Applying the law of product of Corollary \ref{col3.1} and Lemma
\ref{lem3.2} yields that \beno
\begin{split}
\|\na_\ve[q_2]_\Phi\| _{L^1_t(B^{\ga,\f12-\ga})}\lesssim
&\ve^{1-\al}\|[v^{\rm h}\otimes v^{\rm
h}]_\Phi\|_{L^1_t(B^{1+\ga,\f12-\ga})}\\
\lesssim &\ve^{1-\al}\|v^{\rm
h}_\Phi\|_{\widetilde{L}^\infty_t(B^{0,\f12})} \|v^{\rm
h}_\Phi\|_{L^1_t(B^{2+\ga,\f12-\ga})},
\end{split}
\eeno and   \beno
\begin{split}
\|\na_\ve[q_2]_\Phi\| _{L^1_t(B^{-\ga,\f12+\ga})}\lesssim
&\ve^{1-\al}\|[v^{\rm h}\otimes v^{\rm
h}]_\Phi\|_{L^1_t(B^{1-\ga,\f12+\ga})}\\
\lesssim &\ve^{1-\al}\bigl(\|v^{\rm
h}_\Phi\|_{\widetilde{L}^\infty_t(B^{0,\f12})} \|v^{\rm
h}_\Phi\|_{L^1_t(B^{2-\ga,\f12+\ga})}+\|v^{\rm
h}_\Phi\|_{\widetilde{L}^\infty_t(B^{-\ga,\f12+\ga})} \|v^{\rm
h}_\Phi\|_{L^1_t(B^{2,\f12})}\bigr).
\end{split}
\eeno This gives rise to \beq\label{l.4}
\begin{split}
\|q_2\|_{Z_t}\le C\ve^{1-\al}\Big(\|v^{\rm
h}_\Phi\|_{\widetilde{L}^\infty_t(B^{0,\f12})}\bigl(\|v^{\rm
h}_\Phi\|_{L^1_t(B^{2+\ga,\f12-\ga})}&+\|v^{\rm
h}_\Phi\|_{L^1_t(B^{2-\ga,\f12+\ga})}\bigr)\\
& +\|v^{\rm
h}_\Phi\|_{\widetilde{L}^\infty_t(B^{-\ga,\f12+\ga})}\|v^{\rm
h}_\Phi\|_{L^1_t(B^{2,\f12})}\Bigr).
\end{split} \eeq

$\bullet$ \underline{Estimate of $\na_\ve q_3$}

To deal with $\na_\ve q_3$ given by \eqref{l.2}, we first use Bony's
decomposition \eqref{pd} for the vertical variable to split it as
\beq\label{p.3}
q_3=\ve^{1-\al}(-\Delta_\ve)^{-1}\pa_3\textrm{div}_{\rm h}(T^{\rm
v}({v^3},v^{\rm
h}))+\ve^{1-\al}(-\Delta_\ve)^{-1}\pa_3\textrm{div}_{\rm h}(\cR^{\rm
v}(v^3,{v^{\rm h}}))\eqdefa q_{31}+q_{32}. \eeq

Applying Lemma \ref{lem:Berstein} and $\dive v=0$ yields \beno
\begin{split}
\|\D_{k}^{\rm h}\D_\ell^{\rm v}\cR^{\rm h}\cR^{\rm
v}(v^3,v^h)\|_{L^1_t(L^2)}\lesssim &\sum_{\substack{k'\geq
k-N_0\\\ell'\geq \ell-N_0}}\|\D_{k'}^{\rm h}\D_{\ell'}^{\rm
v}v^3\|_{L^1_t(L^2)}\|S_{k'+2}^{\rm h}S_{\ell'+2}^{\rm v}v^{\rm
h}\|_{L^\infty_t(L^\infty)}\\
\lesssim &\sum_{\substack{k'\geq k-N_0\\\ell'\geq
\ell-N_0}}2^{-\ell'}\|\D_{k'}^{\rm h}\D_{\ell'}^{\rm v}\dive_{\rm h}
v^{\rm h}\|_{L^1_t(L^2)}\|S_{k'+2}^{\rm h}S_{\ell'+2}^{\rm v}v^{\rm
h}\|_{L^\infty_t(L^\infty)}\\
\lesssim& d_{k,\ell}2^{-k\ga}2^{-\ell\bigl(\f32-\ga\bigr)}\|v^{\rm
h}\|_{L^1_t(B^{2,\f12})}\|v^{\rm
h}\|_{\wt{L}^\infty_t(B^{\ga,\f12-\ga})}.
\end{split}
\eeno The same estimate holds for $T^{\rm h}\cR^{\rm v}(v^3,v^h).$
This gives \beno \|\cR^{\rm
v}(v^3,v^h)\|_{L^1_t(B^{\ga,\f32-\ga})}\lesssim \|v^{\rm
h}\|_{L^1_t(B^{2,\f12})}\|v^{\rm
h}\|_{\wt{L}^\infty_t(B^{\ga,\f12-\ga})}. \eeno In view of
\eqref{k.3ad}, similar estimate holds for $[\cR^{\rm
v}(v^3,v^h)]_\Phi,$ which ensures \beq\label{l.5}
\begin{split}
\|\na_\ve[q_{32}]_\Phi\|_{L^1_t(B^{\ga,\f12-\ga})} \lesssim
&\ve^{1-\al}  \|[\cR^{\rm
v}(v^3,v^h)]_\Phi\|_{L^1_t(B^{\ga,\f32-\ga})}\\
\lesssim & \ve^{1-\al} \|v^{\rm
h}_\Phi\|_{L^1_t(B^{2,\f12})}\|v^{\rm
h}_\Phi\|_{\wt{L}^\infty_t(B^{\ga,\f12-\ga})}. \end{split} \eeq

Again due to $\dive v=0,$ we have \beno \begin{split}
\|S_{k'-1}^{\rm h}\D_{\ell'}^{\rm v}v^3\|_{L^1_t(L^\infty_{\rm
h}(L^2_{\rm v}))}\lesssim& 2^{-\ell'}\|S_{k'-1}^{\rm
h}\D_{\ell'}^{\rm
v}\dive_{\rm h}v^{\rm h}\|_{L^1_t(L^\infty_{\rm h}(L^2_{\rm v}))}\\
\lesssim
&d_{k',\ell'}2^{k'\ga}2^{-\ell'\bigl(\f32+\ga\bigr)}\|v^{\rm
h}\|_{L^1_t(B^{2-\ga,\f12+\ga})}, \end{split} \eeno which implies
\beno
\begin{split}
\|\D_{k}^{\rm h}\D_\ell^{\rm v}T^{\rm h}\cR^{\rm
v}(v^3,v^h)\|_{L^1_t(L^2)}\lesssim &\sum_{\substack{|k'- k|\leq
4\\\ell'\geq \ell-N_0}}\|S_{k'-1}^{\rm h}\D_{\ell'}^{\rm
v}v^3\|_{L^1_t(L^\infty_{\rm h}(L^2_{\rm v}))}\|\D_{k'}^{\rm
h}S_{\ell'+2}^{\rm v}v^{\rm
h}\|_{L^\infty_t(L^2_{\rm h}(L^\infty_{\rm v}))}\\
\lesssim& d_{k,\ell}2^{k\ga}2^{-\ell\bigl(\f32+\ga\bigr)}\|v^{\rm
h}\|_{L^1_t(B^{2-\ga,\f12+\ga})}\|v^{\rm
h}\|_{\wt{L}^\infty_t(B^{0,\f12})}.
\end{split}
\eeno The same estimate holds for $R^{\rm h}\cR^{\rm v}(v^3,v^h)$
and $\bar{T}^{\rm h}\cR^{\rm v}(v^3,v^h).$ This leads to \beno
\|\cR^{\rm v}(v^3,v^{\rm h})\|_{L^1_t(B^{-\ga,\f32+\ga})}\lesssim
\|v^{\rm h}\|_{L^1_t(B^{2-\ga,\f12+\ga})}\|v^{\rm
h}\|_{\wt{L}^\infty_t(B^{0,\f12})}. \eeno Similar estimate holds for
$[\cR^{\rm v}(v^3,v^{\rm h})]_\Phi,$ which implies
 \beq\label{l.6}
\begin{split}
\|\na_\ve[q_{32}]_\Phi\|_{L^1_t(B^{-\ga,\f12+\ga})} \lesssim
&\ve^{1-\al}  \|[\cR^{\rm
v}(v^3,v^h)]_\Phi\|_{L^1_t(B^{-\ga,\f32+\ga})}\\
\lesssim & \ve^{1-\al}  \|v^{\rm
h}_\Phi\|_{L^1_t(B^{2-\ga,\f12+\ga})}\|v^{\rm
h}_\Phi\|_{\wt{L}^\infty_t(B^{0,\f12})}. \end{split} \eeq

Combining \eqref{l.5} with \eqref{l.6}, we obtain \beq\label{l.7}
\|q_{32}\|_{Z_t}\lesssim \ve^{1-\al}\Bigl(\|v^{\rm
h}_\Phi\|_{L^1_t(B^{2,\f12})}\|v^{\rm
h}_\Phi\|_{\widetilde{L}^\infty_t(B^{\ga,\f12-\ga})} +\|v^{\rm
h}_\Phi\|_{L^1_t(B^{2-\ga,\f12+\ga})}\|v^{\rm
h}_\Phi\|_{\widetilde{L}^\infty_t(B^{0,\f12})}\Bigr). \eeq

While using Bony's decomposition \eqref{pd} to $T^{\rm v}(v^3,
v^{\rm h})$ for the horizontal variables, one has \beno T^{\rm
v}(v^3, v^{\rm h})=\bigl(T^{\rm h}+R^{\rm h}+\bar{T}^{\rm
h}\bigr)T^{\rm v}(v^3, v^{\rm h}), \eeno from which, we deduce by a
similar proof of Lemma \ref{lem3.2} that \beno \|[T^{\rm v}(v^3,
v^{\rm h})]_\Phi\|_{L^1_t(B^{1+\ga,\f12-\ga})}\lesssim
\|v^3_\Phi\|_{\wt{L}^2_t(B^{1,\f12})}\|v^{\rm
h}_\Phi\|_{\wt{L}^2_t(B^{1+\ga,\f12-\ga})}+\|v^3_\Phi\|_{L^1_t(B^{2,\f12})}\|v^{\rm
h}_\Phi\|_{\wt{L}^\infty_t(B^{\ga,\f12-\ga})}, \eeno and \beno
\|[T^{\rm v}(v^3, v^{\rm
h})]_\Phi\|_{L^1_t(B^{1-\ga,\f12+\ga})}\lesssim
\|v^3_\Phi\|_{\wt{L}^2_t(B^{1,\f12})}\|v^{\rm
h}_\Phi\|_{\wt{L}^2_t(B^{1-\ga,\f12+\ga})}, \eeno so that there
holds \beq \label{l.8} \begin{split}
 \|q_{31}\|_{Z_t}\lesssim &\ve^{-\al}\Bigl( \|[T^{\rm v}(v^3,
v^{\rm h})]_\Phi\|_{L^1_t(B^{1+\ga,\f12-\ga})}+\|[T^{\rm v}(v^3,
v^{\rm h})]_\Phi\|_{L^1_t(B^{1-\ga,\f12+\ga})}\Bigr)\\
\lesssim &
\ve^{-\al}\Bigl(\|v^3_\Phi\|_{\wt{L}^2_t(B^{1,\f12})}\bigl(\|v^{\rm
h}_\Phi\|_{\wt{L}^2_t(B^{1+\ga,\f12-\ga})}+\|v^{\rm
h}_\Phi\|_{\wt{L}^2_t(B^{1-\ga,\f12+\ga})}\bigr)\\
&\qquad\qquad\qquad\qquad\qquad\qquad\qquad\qquad+\|v^3_\Phi\|_{L^1_t(B^{2,\f12})}\|v^{\rm
h}_\Phi\|_{\wt{L}^\infty_t(B^{\ga,\f12-\ga})}\Bigr). \end{split}\eeq

$\bullet$ \underline{Estimate of $\na_\ve q_4$}

Along the same line to the manipulation of $\na_\ve q_3$, we first
split $q_4$ as \beq\label{p.4}
q_4=\ve^{1-\al}(-\Delta_\ve)^{-1}\pa_3T^{\rm
v}({v^3},\textrm{div}_{\rm h}v^{\rm
h})+\ve^{1-\al}(-\Delta_\ve)^{-1}\pa_3(\cR^{\rm
v}(v^3,{\textrm{div}_{\rm h}v^{\rm h}}))\eqdefa q_{41}+q_{42}. \eeq
Similar to \eqref{o.7}, we have \beno \|q_{42}\|_{Z_t}\lesssim
\e^{1-\al-2\ga}\|[\cR^{\rm v}(v^3,\dive_{\rm h}v^{\rm
h})]_\Phi\|_{L^1_t(B^{-1+\ga,\f32-\ga})}, \eeno from which and a
similar proof of \eqref{l.7}, we infer \beq\label{l.9}
\|q_{42}\|_{Z_t}\lesssim \ve^{1-\al-2\ga}\|v^{\rm
h}_\Phi\|_{L^1_t(B^{2,\f12})}\|v^{\rm
h}_\Phi\|_{\widetilde{L}^\infty_t(B^{\ga,\f12-\ga})}. \eeq While a
similar proof of Lemma \ref{lem3.2} gives rise to  \beno \|[T^{\rm
v}({v^3},\textrm{div}_{\rm h}v^{\rm
h})]_\Phi\|_{L^1_t(B^{\ga,\f12-\ga})}\lesssim
\|v^3_\Phi\|_{\wt{L}^2_t(B^{1,\f12})}\|v^{\rm
h}_\Phi\|_{\wt{L}^2_t(B^{1+\ga,\f12-\ga})}, \eeno and \beno
\|[T^{\rm v}({v^3},\textrm{div}_{\rm h}v^{\rm
h})]_\Phi\|_{L^1_t(B^{-\ga,\f12+\ga})}\lesssim
\|v^3_\Phi\|_{\wt{L}^2_t(B^{1,\f12})}\|v^{\rm
h}_\Phi\|_{\wt{L}^2_t(B^{1-\ga,\f12+\ga})}. \eeno We thus obtain
\beq\label{l.10} \begin{split} \|q_{41}\|_{Z_t}\lesssim
&\ve^{-\al}\Bigl(\|[T^{\rm v}({v^3},\textrm{div}_{\rm h}v^{\rm
h})]_\Phi\|_{L^1_t(B^{\ga,\f12-\ga})}+\|[T^{\rm
v}({v^3},\textrm{div}_{\rm h}v^{\rm
h})]_\Phi\|_{L^1_t(B^{-\ga,\f12+\ga})}\Bigr)\\
\lesssim &
\ve^{-\al}\|v^3_\Phi\|_{\widetilde{L}^2_t(B^{1,\f12})}\bigl(\|v^{\rm
h}_\Phi\|_{\widetilde{L}^2_t(B^{1+\ga,\f12-\ga})} +\|v^{\rm
h}_\Phi\|_{\widetilde{L}^2_t(B^{1-\ga,\f12+\ga})}\bigr).
\end{split} \eeq

$\bullet$ \underline{Estimate of $\na_\ve q_5$}

We shall use the decomposition (\ref{eq:q5}) to deal with $q_5$.
Applying Corollary \ref{col3.1} gives \beno
\begin{split}
\|q_{51}\|_{Z_t}\lesssim &\|[G(\ve^\be a)\D_{\rm h}v^{\rm
h}]_\Phi\|_{L^1_t(B^{\ga,\f12-\ga})}+\|[G(\ve^\be a)\D_{\rm h}v^{\rm
h}]_\Phi\|_{L^1_t(B^{-\ga,\f12+\ga})}\\
\lesssim &\|[G(\ve^\be
a)]_\Phi\|_{\wt{L}^\infty_t(B^{1,\f12})}\bigl(\|\D_{\rm h}v^{\rm
h}_\Phi\|_{L^1_t(B^{\ga,\f12-\ga})}+\|\D_{\rm h}v^{\rm
h}_\Phi\|_{L^1_t(B^{-\ga,\f12+\ga})}\bigr)\\
&\qquad\qquad\qquad\qquad\qquad+ \|[G(\ve^\be
a)]_\Phi\|_{\wt{L}^\infty_t(B^{1-\ga,\f12+\ga})}\|\D_{\rm h}v^{\rm
h}_\Phi\|_{L^1_t(B^{0,\f12})},
\end{split} \eeno
from which, $\ve^\be K\leq\epsilon,$  and Lemma
\ref{lem:composition}, we conclude \beq\label{l.11}
\begin{split}
\|q_{51}\|_{Z_t}\lesssim &
\ve^\be\|a_\Phi\|_{\widetilde{L}^\infty_t(B^{1,\f12})}
\bigl(\|v_\Phi^{\rm h}\|_{L^1_t(B^{2+\ga,\f12-\ga})}+\|v_\Phi^{\rm h}\|_{L^1_t(B^{2-\ga,\f12+\ga})}\big)\\
&\qquad\qquad\qquad\qquad\qquad+\ve^\be\|a_\Phi\|_{\widetilde{L}^\infty_t(B^{1-\ga,\f12+\ga})}\|v_\Phi^{\rm
h}\|_{L^1_t(B^{2,\f12})}.
\end{split}
\eeq The same argument yields \beq\label{l.12}
\begin{split}
\|q_{52}\|_{Z_t}\lesssim
&\ve^{2+\be}\|a_\Phi\|_{\widetilde{L}^\infty_t(B^{1,\f12})}\bigl(\
\|v_\Phi^{\rm h}\|_{L^1_t(B^{\ga,\f52-\ga})}+\|v_\Phi^{\rm h}\|_{L^1_t(B^{-\ga,\f52+\ga})}\bigr)\\
&\qquad\qquad\qquad\qquad\qquad+
\ve^{2+\be}\|a_\Phi\|_{\widetilde{L}^\infty_t(B^{1-\ga,\f12+\ga})}\|v_\Phi^{\rm
h}\|_{L^1_t(B^{0,\f52})}.
\end{split}
\eeq Note that \beno
\begin{split}
\|q_{53}\|_{Z_t}\lesssim & \ve^{-2\ga}\|[G(\ve^\be a)\D_{\rm
h}v^3]_\Phi\|_{L^1_t(B^{-1+\ga,\f32-\ga})}\\
\lesssim &\ve^{-2\ga}\Bigl(\|[G(\ve^\be
a)]_\Phi\|_{\wt{L}^\infty_t(B^{1,\f12})}\|\D_{\rm
h}v^3_\Phi\|_{L^1_t(B^{-1+\ga,\f32-\ga})}\\
&\qquad\qquad\qquad+\|[G(\ve^\be
a)]_\Phi\|_{\wt{L}^\infty_t(B^{\ga,\f32-\ga})}\|\D_{\rm
h}v^3_\Phi\|_{L^1_t(B^{0,\f12})}\Bigr), \end{split} \eeno which
together with Lemma \ref{lem:composition} and $\dive v=0$ ensures
that \beq\label{l.13}
\begin{split}
\|q_{53}\|_{Z_t}\lesssim
\ve^{\be-2\ga}\Bigl(\|a_\Phi\|_{\widetilde{L}^\infty_t(B^{1,\f12})}\|v_\Phi^{\rm
h}\|_{L^1_t(B^{2+\ga,\f12-\ga})}
+\|a_\Phi\|_{\widetilde{L}^\infty_t(B^{\ga,\f32-\ga})}\|v_\Phi^3\|_{L^1_t(B^{2,\f12})}\Bigr).
\end{split}
\eeq

Similarly due to $\dive v=0,$ we have \beno
\|\na_\ve[q_{54}]_\Phi\|_{L^1_t(B^{\ga,\f12-\ga})}\lesssim
\ve^\be\|a_\Phi\|_{\widetilde{L}^\infty_t(B^{1,\f12})}\ve\|v_\Phi^{\rm
h}\|_{L^1_t(B^{1+\ga,\f32-\ga})},\eeno and \beno
\begin{split}
 \|\na_\ve[q_{54}]_\Phi\|_{L^1_t(B^{-\ga,\f12+\ga})}\lesssim
\ve^{1+\be}\Bigl(\|a_\Phi\|_{\widetilde{L}^\infty_t(B^{1,\f12})}\|v_\Phi^{\rm
h}\|_{L^1_t(B^{1-\ga,\f32+\ga})}+\|a_\Phi\|_{\widetilde{L}^\infty_t(B^{1-\ga,\f12+\ga})}\|v_\Phi^{\rm
h}\|_{L^1_t(B^{1,\f32})}\Bigr). \end{split} \eeno This gives rise to
 \beq\label{l.13}
\begin{split}
\|q_{54}\|_{Z_t}\lesssim &
\ve^{1+\be}\|a_\Phi\|_{\widetilde{L}^\infty_t(B^{1,\f12})}\bigl(\|v_\Phi^{\rm
h}\|_{L^1_t(B^{1+\ga,\f32-\ga})}
+\|v_\Phi^{\rm h}\|_{L^1_t(B^{1-\ga,\f32+\ga})}\bigr)\\
&\qquad\qquad\qquad\qquad\qquad+\ve^{1+\be}\|a_\Phi\|_{\widetilde{L}^\infty_t(B^{1-\ga,\f12+\ga})}\|v_\Phi^{\rm
h}\|_{L^1_t(B^{1,\f32})}.
\end{split} \eeq

By summing up the above estimates, we conclude that \beq\label{l.20}
\begin{split}
\|q\|_{Z_t}\lesssim &
\ve^{\be}\bigl(\|a_\Phi\|_{\wt{L}^\infty_t(B^{1,\f12})}+\|a_\Phi\|_{\wt{L}^\infty_t(B^{1-\ga,\f12+\ga})}\bigr)\|q\|_{Z_t}\\
&+\ve^{\be}\|a_\Phi\|_{\wt{L}^\infty_t(B^{1,\f12})}\Bigl(\|\D_\ve
v^{\rm h}_\Phi\|_{{L}^1_t(B^{\ga,\f12-\ga})}+\|\D_\ve v^{\rm
h}_\Phi\|_{{L}^1_t(B^{-\ga,\f12+\ga})}\\
&+\ve^{-2\ga}\|v^{\rm
h}_\Phi\|_{{L}^1_t(B^{2+\ga,\f12-\ga})}+\ve\|v^{\rm
h}_\Phi\|_{{L}^1_t(B^{1+\ga,\f32-\ga})}+\ve\|v^{\rm
h}_\Phi\|_{{L}^1_t(B^{1-\ga,\f32+\ga})}\Bigr)\\
&+\ve^{\be}\|a_\Phi\|_{\wt{L}^\infty_t(B^{1-\ga,\f12+\ga})}\Bigl(\|
v^{\rm h}_\Phi\|_{{L}^1_t(B^{2,\f12})}+\ve^2\| v^{\rm
h}_\Phi\|_{{L}^1_t(B^{0,\f52})}+\ve\|v^{\rm
h}_\Phi\|_{{L}^1_t(B^{1,\f32})}\Bigr)\\
 &+\ve^{1-\al}\Bigl(\|v^{\rm
h}_\Phi\|_{\wt{L}^\infty_t(B^{0,\f12})}\bigl(\|v^{\rm
h}_\Phi\|_{L^1_t(B^{2+\ga,\f12-\ga})}+\|v^{\rm
h}_\Phi\|_{L^1_t(B^{2-\ga,\f12+\ga})}\bigr)\\
&\qquad\qquad\qquad\qquad+\|v^{\rm
h}_\Phi\|_{L^1_t(B^{2,\f12})}\bigl(\|v^{\rm
h}_\Phi\|_{\widetilde{L}^\infty_t(B^{-\ga,\f12+\ga})}+\|v^{\rm
h}_\Phi\|_{\widetilde{L}^\infty_t(B^{\ga,\f12-\ga})}\bigr)\Bigr)\\
&+\ve^{1-\al-2\ga}\|v^{\rm h}_\Phi\|_{L^1_t(B^{2,\f12})}\|v^{\rm
h}_\Phi\|_{\wt{L}^\infty_t(B^{\ga,\f12-\ga})}+\ve^{\be-2\ga}\|a_\Phi\|_{\wt{L}^\infty_t(B^{\ga,\f32-\ga})}\|
v^{3}_\Phi\|_{{L}^1_t(B^{2,\f12})}\\
&+\ve^{-\al}\Bigl(\|v^{3}_\Phi\|_{L^1_t(B^{2,\f12})}\|v^{\rm
h}_\Phi\|_{\wt{L}^\infty_t(B^{\ga,\f12-\ga})}\\
&\qquad\qquad\qquad\qquad+\|v^{3}_\Phi\|_{\wt{L}^2_t(B^{1,\f12})}\bigl(\|v^{\rm
h}_\Phi\|_{\wt{L}^2_t(B^{1+\ga,\f12-\ga})}+\|v^{\rm
h}_\Phi\|_{\wt{L}^2_t(B^{1-\ga,\f12+\ga})}\bigr)\Bigr),
\end{split} \eeq
While it follows from Definition \ref{def2.1} that \beq\label{l.21}
\begin{split}
\ve\|v^{\rm
h}_\Phi\|_{{L}^1_t(B^{1+\ga,\f32-\ga})}=&\ve\sum_{k,\ell\in\Z}2^{k(1+\ga)}2^{\ell\bigl(\f32-\ga\bigr)}\|\D_k^{\rm
h}\D_{\ell}^{\rm v}v^{\rm h}_\Phi\|_{L^1_t(L^2)}\\
\leq&\f12\sum_{k,\ell\in\Z}\bigl(2^{k(2+\ga)}2^{\ell\bigl(\f12-\ga\bigr)}+\ve^22^{k\ga}2^{\ell\bigl(\f52-\ga\bigr)}\bigr)\|\D_k^{\rm
h}\D_{\ell}^{\rm v}v^{\rm h}_\Phi\|_{L^1_t(L^2)}\\
=&\f12\bigl(\|v^{\rm
h}_\Phi\|_{L^1_t(B^{2+\ga,\f12-\ga})}+\ve^2\|v^{\rm
h}_\Phi\|_{L^1_t(B^{\ga,\f52-\ga})}\bigr). \end{split} \eeq The same
argument gives \beq \label{l.22}
\begin{split} &\ve\|v_\Phi\|_{L^1_t(B^{1,\f32})}\leq \f12\bigl(\|v^{\rm
h}_\Phi\|_{L^1_t(B^{2,\f12})}+\ve^2\|v^{\rm
h}_\Phi\|_{L^1_t(B^{0,\f52})}\bigr),\\
&\ve\|v_\Phi\|_{L^1_t(B^{1-\ga,\f32+\ga})}\leq \f12\bigl(\|v^{\rm
h}_\Phi\|_{L^1_t(B^{2-\ga,\f12+\ga})}+\ve^2\|v^{\rm
h}_\Phi\|_{L^1_t(B^{-\ga,\f52+\ga})}\bigr). \end{split} \eeq
Therefore, in view of \eqref{p.1},  we deduce from \eqref{l.1before}
and \eqref{l.20} that \beno \|q\|_{Z_t}\le C\Bigl(K
\ve^\be\|q\|_{Z_t}+\ve^{\be-3\al-5\ga}\Psi_1(t)\Psi_3(t)+\bigl(\ve^{-\al}+\ve^{1-3\al-4\ga}\bigr)\Psi_2(t)\Psi_3(t)+\ve^{-2\al-\ga}\Psi_4^2(t)\Bigr),
\eeno which together with the assumptions on $\al,\be$ and $\ga$
leads to \eqref{l.1}, and we completes the proof of the proposition.
\end{proof}

\begin{Remark}\label{rem:pressure} It is easy to observe from the proof of Proposition \ref{prop6.2} that
if $\be>2\al,$ $\ga\le \min\bigl(\f {1-3\al} 4, \f {\be-2\al}
2\bigr)$ and $\ve$ is so small that $\ve^\be\leq
\min\Bigl(\f1{2C_0K},\f{\epsilon}K\Bigr),$ then there holds
$$\longformule{
\|q_1\|_{Z_t}+\|q_2\|_{Z_t}+\|q_{32}\|_{Z_t}+\|q_{42}\|_{Z_t}+\|q_{51}\|_{Z_t}+\|q_{52}\|_{Z_t}+\|q_{54}\|_{Z_t}}{{}\le
C\max\Bigl(\ve^{\be-2\al-2\ga}, \ve^{1-3\al-4\ga},
K\ve^{\be-2\al-\ga}\Bigr)\Psi^2(t).}
$$
\end{Remark}

\setcounter{equation}{0}

\section{Classical parabolic type estimates}\label{sect7}

This section is devoted to the estimate of the analytic band
$\theta$, i.e, the proof of Proposition \ref{prop:tht}. To achieve
this, we first  rewrite the momentum equation of
\eqref{eq:InhomoNS-scaled} as follows \beq\label{eq:velocitya}
\begin{split} &\p_t v-\Delta_\ve v=F_1+F_2+F_3\with\\
 F_1\eqdefa -\ve^{1-\al}& v\cdot\na v,\quad
F_2\eqdefa-\f {\ve^\be a} {1+\ve^\be a}\Delta_\ve v,\quad
F_3\eqdefa-\f {1} {1+\ve^\be a}\na^\ve q. \end{split} \eeq  For
$E_\ve$ given by \eqref{g.2}, applying the Duhamel formula to
\eqref{eq:velocitya} gives
\begin{align}\label{eq:velocity}
v(t)=e^{t\Delta_\ve}v_0+E_\ve(F_1+F_2+F_3).
\end{align}
In what follows, we denote \beq\label{q.1} \|f\|_{H_t}\eqdefa
\big\|f_\Phi\big\|_{L^1_t(B^{1,\f12})}+\big\|f_\Phi\big\|_{L^1_t(B^{1+\ga,\f12-\ga})}
+\ve^{1+\ga}\big\|f_\Phi\big\|_{L^1_t(B^{-\ga,\f32+\ga})}.  \eeq
First of all, it follows from Lemma \ref{lem:parabolic-home} that
\beno \ve^{1-\al}\bigl\|[e^{t\D_\ve}v_0^{\rm
h}]_\Phi\bigr\|_{L^1_t(B^{1,\f12})}\lesssim \ve^\ga \bigl\|e^{\delta
|D|}v_0^{\rm h}\bigr\|_{B^{-\al-\ga,-\f12+\al+\ga}}. \eeno However
since $0<\ga<\f{1-2\al}4,$ we have $-\f12+\ga<-\al-\ga<0$ and
$-\f12<-\f12+\al+\ga<-\ga,$ so that applying Lemma \ref{lem3.1}
yields \beno \ve^{1-\al}\bigl\|[e^{t\D_\ve}v_0^{\rm
h}]_\Phi\bigr\|_{L^1_t(B^{1,\f12})}\lesssim
\ve^\ga\Bigl(\bigl\|e^{\delta |D|}v_0^{\rm
h}\bigr\|_{B^{-\f12+\ga,-\ga}}+\bigl\|e^{\delta |D|}v_0^{\rm
h}\bigr\|_{B^{0,-\f12}}\Bigr)\le \ve^\ga\|v^{\rm h}_0\|_{X_2} \eeno
for the norm $\|\cdot\|_{X_2}$ given by \eqref{i.1}.

Along the same line, one has $$\longformule{
 \ve^{1-\al}\bigl\|[e^{t\D_\ve}v_0^{\rm
h}]_\Phi\bigr\|_{L^1_t(B^{1+\ga,\f12-\ga})}+\ve^{2-\al+\ga}\bigl\|[e^{t\D_\ve}v_0^{\rm
h}]_\Phi\bigr\|_{L^1_t(B^{-\ga,\f32+\ga})}}{{}\lesssim
\ve^\ga\Bigl(\bigl\|e^{\delta |D|}v_0^{\rm
h}\bigr\|_{B^{-\al,-\f12+\al}}+\bigl\|e^{\delta |D|}v_0^{\rm
h}\bigr\|_{B^{-\al-\ga,-\f12+\al+\ga}}\Bigr)\lesssim
\ve^\ga\|v_0^{\rm h}\|_{X_2}.}
$$

While it follows form  the second inequality of \eqref{g.1} that
\beno \big\|[e^{t\Delta_\ve}v_0^3]_\Phi\big\|_{L^1_t(B^{1,\f12})}
+\ve^{1+\ga}\big\|[e^{t\Delta_\ve}v_0^3]_\Phi\big\|_{L^1_t(B^{-\ga,\f32+\ga})}
\lesssim \bigl\|e^{\delta |D|}v_0^{\rm h}\bigr\|_{B^{0,-\f12}}.
\eeno While it follows from the proof of Lemma
\ref{lem:parabolic-home} and $\dive v_0=0$ that \beno
\begin{split}
\bigl\|\D_k^{\rm h}\D_{\ell}^{\rm
v}[e^{t\Delta_\ve}v_0^3]_\Phi\big\|_{L^1_t(L^2)}\lesssim
&2^{-2k}\bigl\|e^{\delta |D|}\D_k^{\rm h}\D_\ell^{\rm
v}v_0^3\bigr\|_{L^2}^{\f{2\ga}{1+\ga}}\Bigl(2^{-\ell}\bigl\|e^{\delta
|D|}\D_k^{\rm h}\D_\ell^{\rm v}\dive_{\rm
h}v_0^h\bigr\|_{L^2}\Bigr)^{\f{1-\ga}{1+\ga}}\\
\lesssim &
d_{k,\ell}2^{-k(1+\ga)}2^{-\ell\bigl(\f12-\ga\bigr)}\|e^{\delta
|D|}v_0^3\|_{B^{0,-\f12}}^{\f{2\ga}{1+\ga}}\|e^{\delta |D|}v_0^{\rm
h}\|_{B^{-\ga,-\f12+\ga}}^{\f{1-\ga}{1+\ga}},
\end{split}
\eeno which gives \beno
\big\|[e^{t\Delta_\ve}v_0^3]_\Phi\big\|_{L^1_t(B^{1+\ga,\f12-\ga})}\lesssim
\|v_0\|_{X_2}. \eeno As a consequence, we obtain
\begin{align}\label{eq:v-chom}
\ve^{1-\al}\|e^{t\Delta_\ve}v_0^{\rm
h}\|_{H_t}+\ve^\ga\|e^{t\Delta_\ve}v_0^3\|_{H_t}\le C\ve^\ga\|v_0\|_{X_2}.
\end{align}

{\bf Step 1.} Estimate of the horizontal velocity\vspace{0.1cm}

$\bullet$ \underline{Estimate of $E_\ve(F_1^{\rm h})$}

Since $\dive v=0,$ $v\cdot\na v^{\rm h}=\na_{\rm h}\cdot(v^{\rm
h}\otimes v^{\rm h})+\pa_3(v^3v^{\rm h})$, we get, by applying Lemma
\ref{lem:parabolic-inhome} and the law of product of Corollary
\ref{col3.1}, that
\begin{align*}
\ve^{1-\al}\big\|[E_\ve(F_1^{\rm h})]_\Phi\big\|_{L^1_t(B^{1,\f12})}
\lesssim & \ve^{2(1-\al)}\bigl\|\dive_{\rm h}[v^{\rm h}\otimes
v^{\rm h}]_\Phi\|_{L^1_t(B^{-1,\f12})}+\ve^{1-2\al}\bigl\|\p_3[
v^{3}v^{\rm h}]_\Phi\bigr\|_{L^1_t(B^{0,-\f12})}\\
\lesssim & \ve^{2(1-\al)}\bigl\|[v^hv^{\rm h}]_\Phi\bigr\|_{L^1_t(B^{0,\f12})}+\ve^{1-2\al}\bigl\|[v^3v^{\rm h}]_\Phi\bigr\|_{L^1_t(B^{0,\f12})}\\
\lesssim & \ve^{1-2\al}\bigl(\ve\|v^{\rm
h}_\Phi\|_{L^1_t(B^{1,\f12})}
+\|v^3_\Phi\|_{L^1_t(B^{1,\f12})}\bigr)\|v^{\rm
h}_\Phi\|_{\widetilde{L}^\infty_t(B^{0,\f12})},
\end{align*}
Along the same line, we have
\begin{align*}
\ve^{1-\al}\big\|[E_\ve(F_1^{\rm h})]_\Phi\big\|_{L^1_t(B^{1+\ga,\f12-\ga})}&\lesssim
\ve^{2(1-\al)}\bigl\|[v^{\rm h}\otimes v^{\rm h}]_\Phi\bigr\|_{L^1_t(B^{\ga,\f12-\ga})}+\ve^{1-2\al}\bigl\|[v^3v^{\rm h}]_\Phi\bigr\|_{L^1_t(B^{\ga,\f12-\ga})}\\
&\lesssim \ve^{1-2\al}\bigl(\ve\|v^{\rm
h}_\Phi\|_{L^1_t(B^{1,\f12})}
+\|v^3_\Phi\|_{L^1_t(B^{1,\f12})}\bigr)\|v^{\rm
h}_\Phi\|_{\widetilde{L}^\infty_t(B^{\ga,\f12-\ga})}.
\end{align*}
and if $\al\leq \f12,$
\begin{align*}
\ve^{2-\al+\ga}\big\|[E_\ve(F_1^{\rm h})]_\Phi\big\|_{L^1_t(B^{-\ga,\f32+\ga})}\lesssim &
\ve^{2(1-\al)}\|[v^{\rm h}\otimes v^{\rm h}]_\Phi\|_{L^1_t(B^{0,\f12})}+\ve^{1-2\al+\ga}\|\pa_3[v^3v^{\rm h}]_\Phi\|_{L^1_t(B^{-\ga,-\f12+\ga})}\\
\lesssim & \ve^{1-2\al}\Bigl(\ve\|v^{\rm
h}_\Phi\|_{L^1_t(B^{1,\f12})}\|v^{\rm
h}_\Phi\|_{\widetilde{L}^\infty_t(B^{0,\f12})}
+\|v^3_\Phi\|_{L^1_t(B^{1,\f12})}\|v^{\rm h}_\Phi\|_{\widetilde{L}^\infty_t(B^{-\ga,\f12+\ga})}\\
&\qquad\qquad\qquad\qquad\qquad\qquad+\e^\ga\|v^3_\Phi\|_{L^1_t(B^{1-\ga,\f12+\ga})}\|v^{\rm
h}_\Phi\|_{\widetilde{L}^\infty_t(B^{0,\f12})}\Bigr).
\end{align*}
We thus obtain \beno
\begin{split}
 \ve^{1-\al}\|E_\ve(F_1^{\rm h})\|_{H_t}\lesssim & \ve^{1-2\al}\Bigl(\bigl(\ve\|v^{\rm h}_\Phi\|_{L^1_t(B^{1,\f12})}
+\|v^3_\Phi\|_{L^1_t(B^{1,\f12})}\bigr)\bigl(\|v^{\rm
h}_\Phi\|_{\widetilde{L}^\infty_t(B^{0,\f12})}+ \|v^{\rm
h}_\Phi\|_{\widetilde{L}^\infty_t(B^{\ga,\f12-\ga})}\bigr)\\
&\qquad+\|v^3_\Phi\|_{L^1_t(B^{1,\f12})}\|v^{\rm
h}_\Phi\|_{\widetilde{L}^\infty_t(B^{-\ga,\f12+\ga})}+\e^\ga\|v^3_\Phi\|_{L^1_t(B^{1-\ga,\f12+\ga})}\|v^{\rm
h}_\Phi\|_{\widetilde{L}^\infty_t(B^{0,\f12})}\Bigr).
\end{split}\eeno
However, note that \beno
\begin{split}
\e^\ga\|v^3_\Phi\|_{L^1_t(B^{1-\ga,\f12+\ga})}=&\sum_{k,\ell\in\Z}2^{k(1-\ga)}2^{\ell\bigl(\f12+\ga\bigr)}\Bigl(\e^{1+\ga}\|\D_k^{\rm
h}\D_\ell^{\rm
v}v_\Phi^3\|_{L^1_t(L^2)}\Bigr)^{\f{\ga}{1+\ga}}\|\D_k^{\rm
h}\D_\ell^{\rm v}v_\Phi^3\|_{L^1_t(L^2)}^{\f{1}{1+\ga}}\\
\lesssim
&\Bigl(\e^{1+\ga}\|v^3_\Phi\|_{L^1_t(B^{-\ga,\f32+\ga})}\Bigr)^{\f{\ga}{1+\ga}}\|v^3_\Phi\|_{L^1_t(B^{1,\f12})}^{\f{1}{1+\ga}},
\end{split}
\eeno we infer
 \beq\label{eq:vh-cF1}
 \ve^{1-\al}\|E_\ve(F_1^{\rm h})\|_{H_t}\lesssim \ve^{1-2\al-\ga}\theta(t)\Psi_2(t).
 \eeq

$\bullet$ \underline{Estimate of $E_\ve(F_2^{\rm h})$}

Similar to the estimate of $E_\ve(F_1^{\rm h}),$ since $\ve^\be
K\leq \epsilon,$ we get, by applying Lemma
\ref{lem:parabolic-inhome}, the law of product of Corollary
\ref{col3.1}  and Lemma \ref{lem:composition}, that \beno
\begin{split}
\|E_\ve(G(\ve^\be a)\D_{\rm h}v^{\rm h})\|_{H_t}\lesssim
&\ve^{-\ga}\|[G(\ve^\be a)\D_{\rm h}v^{\rm
h}]_{\Phi}\|_{L^1_t(B^{-1+\ga,\f12-\ga})}\\
\lesssim
&\ve^{\be-\ga}\|a_\Phi\|_{\widetilde{L}^\infty_t(B^{1,\f12})}\|\D_{\rm
h}v^{\rm h}_{\Phi}\|_{L^1_t(B^{-1+\ga,\f12-\ga})},
\end{split}
\eeno and \beno
\begin{split}
\ve^2\|E_\ve(G(\ve^\be a)\p_3^2v^{\rm h})\|_{H_t}\lesssim
&\ve^{1+\ga}\|[G(\ve^\be a)\pa_3^2v^{\rm
h}]_\Phi\|_{L^1_t(B^{-\ga,-\f12+\ga})}\\
\lesssim
&\ve^{1+\be-\ga}\|a_\Phi\|_{\widetilde{L}^\infty_t(B^{1,\f12})}\|\pa_3^2v^{\rm
h}_\Phi\|_{L^1_t(B^{-\ga,-\f12+\ga})}.
\end{split}
\eeno Therefore, we obtain \beq\label{eq:vh-cF2}\begin{split}
\ve^{1-\al}\|E_\ve(F_2^{\rm h})\|_{H_t} \lesssim &
\ve^{1-\al+\be-\ga}
\|a_\Phi\|_{\widetilde{L}^\infty_t(B^{1,\f12})}\bigl(\|v^{\rm
h}_\Phi\|_{L^1_t(B^{1+\ga,\f12-\ga})} +\ve\|v^{\rm
h}_\Phi\|_{{L}^1_t(B^{-\ga,\f32+\ga})}\bigr)\\
\lesssim & \ve^{\be-\ga}\theta(t)\Psi_1(t).
\end{split} \eeq

$\bullet$ \underline{Estimate of $E_\ve(F_3^{\rm h})$}

Due to \eqref{epsilon}, it follows from  Lemma
\ref{lem:parabolic-inhome} and Lemma \ref{lem:composition} that
\beno
\begin{split}
\|E_\ve(F_3^{\rm h})\|_{H_t}\lesssim
&\bigl\|q\bigr\|_{Y_t}+\e^{-\ga}\bigl\|[G(\ve^\be a)\na_{\rm
h}q]_\Phi\bigr\|_{L^1_t(B^{-1+\ga,\f12-\ga})}\\
\lesssim
& \bigl\|q\bigr\|_{Y_t}+\ve^{\be-\ga}\|a_\Phi\|_{\widetilde{L}^\infty_t(B^{1,\f12})}\|\na_{\rm
h}q_\Phi\|_{L^1_t(B^{-1+\ga,\f12-\ga})},
\end{split}
\eeno from which, the assumption that $\ve^{\be-\ga}K\leq 1$ and
Proposition \ref{prop6.1}, we infer
\begin{align}\label{eq:vh-cF3}
e^{1-\al}\|E_\ve(F_3^{\rm h})\|_{H_t}\le C\ve^{1-\al}\|q\|_{Y_t}\le
C\min\big(\ve^{\be-\al-2\ga},\ve^{1-2\al-2\ga}\bigr)\theta(t)\Psi(t).
\end{align}

By summing up (\ref{eq:v-chom})--(\ref{eq:vh-cF3}), we conclude that
\ben\label{eq:vh-class} \ve^{1-\al}\|v^{\rm h}\|_{H_t}\le
C\bigl(\ve^\ga\|v_0^{\rm
h}\|_{X_2}+\max\big(\ve^{\be-\al-2\ga},\ve^{1-2\al-2\ga}\bigr)\theta(t)\Psi(t)\bigr).
\een

{\bf Step 2.} Estimate of the vertical velocity\vspace{0.1cm}

$\bullet$ \underline{Estimate of $E_\ve(F_1^3)$}

Again since $\dive v=0,$ we write
 $$
 v\cdot\na v^3=\na_{\rm
h}\cdot(v^{\rm h}v^3)-2(v^3\textrm{div}_{\rm h}v^{\rm h}),$$ from
which, Lemma \ref{lem:parabolic-inhome} and the law of product of
Corollary \ref{col3.1}, we deduce that
\begin{align*}
\big\|[E_\ve(F_1^3)]_\Phi\big\|_{L^1_t(B^{1,\f12})}
&\lesssim \ve^{1-\al}\Bigl(\|[v^{\rm h}v^3]_\Phi\|_{L^1_t(B^{0,\f12})}+\ve^{-\ga}\|[v^3\textrm{div}_{\rm h}v^{\rm h}]_\Phi\|_{L^1_t(B^{-1+\ga,\f12-\ga})}\Bigr)\\
&\lesssim \ve^{1-\al}\|v^3_\Phi\|_{L^1_t(B^{1,\f12})}\bigl(\|v^{\rm
h}_\Phi\|_{\widetilde{L}^\infty_t(B^{0,\f12})} +\ve^{-\ga}\|v^{\rm
h}_\Phi\|_{\widetilde{L}^\infty_t(B^{\ga,\f12-\ga})}\bigr),
\end{align*}
and
\begin{align*}
\big\|[E_\ve(F_1^3)]_\Phi\big\|_{L^1_t(B^{1+\ga,\f12-\ga})}&\lesssim
\ve^{1-\al}\Bigl(
\|[v^{\rm h}v^3]_\Phi\|_{L^1_t(B^{\ga,\f12-\ga})}+\|[v^3\textrm{div}_{\rm h}v^{\rm h}]_\Phi\|_{L^1_t(B^{-1+\ga,\f12-\ga})}\Bigr)\\
&\lesssim \ve^{1-\al}\|v^3_\Phi\|_{L^1_t(B^{1,\f12})}\|v^{\rm
h}_\Phi\|_{\widetilde{L}^\infty_t(B^{\ga,\f12-\ga})},
\end{align*}
and
\begin{align*}
\ve^{1+\ga}\big\|[E_\ve(F_1^3)]_\Phi\big\|_{L^1_t(B^{-\ga,\f32+\ga})}\lesssim &
\ve^{1-\al}\bigl(\|[v^{\rm h}v^3]_\Phi\|_{L^1_t(B^{0,\f12})}+\ve^{-\ga}\|[v^3\textrm{div}_{\rm h}v^{\rm h}]_\Phi\|_{L^1_t(B^{-1+\ga,\f12-\ga})}\bigr)\\
\lesssim & \ve^{1-\al}\|v^3_\Phi\|_{L^1_t(B^{1,\f12})}\bigl(\|v^{\rm
h}_\Phi\|_{\widetilde{L}^\infty_t(B^{0,\f12})} +\ve^{-\ga}\|v^{\rm
h}_\Phi\|_{\widetilde{L}^\infty_t(B^{\ga,\f12-\ga})}\bigr).
\end{align*}
Therefore, if $\ga\leq 1-\al,$ we obtain
 \beq\label{eq:v3-cF1} \begin{split}
 \|E_\ve(F_1^3)\|_{H_t}\leq &
C\ve^{1-\al}\|v^3_\Phi\|_{L^1_t(B^{1,\f12})}\bigl(\|v^{\rm
h}_\Phi\|_{\widetilde{L}^\infty_t(B^{0,\f12})} +\ve^{-\ga}\|v^{\rm
h}_\Phi\|_{\widetilde{L}^\infty_t(B^{\ga,\f12-\ga})}\bigr)\\
\leq & C\ve^{1-\al-2\ga}\theta(t)\Psi_2(t). \end{split}  \eeq

$\bullet$ \underline{Estimate of $E_\ve(F_2^3)$}

Similar to the estimate of \eqref{eq:vh-cF2}, we have \beno
\begin{split}
\big\|E_\ve(F_2^3)\big\|_{H_t} &\lesssim \ve^{-\ga}\|[G(\ve^\be
a)\Delta_hv^3]_\Phi\|_{L^1_t(B^{-1+\ga,\f12-\ga})}+
\ve^{1+\ga}\|[G(\ve^\be a)\pa_3^2v^3]_\Phi\|_{L^1_t(B^{-\ga,-\f12+\ga})}\\
&\lesssim
\|a_\Phi\|_{\widetilde{L}^\infty_t(B^{1,\f12})}\bigl(\ve^{\be-\ga}
\|v^3_\Phi\|_{L^1_t(B^{1+\ga,\f12-\ga})}
+\ve^{1+\be+\ga}\|v^3_\Phi\|_{{L}^1_t(B^{-\ga,\f32+\ga})}\bigr),
\end{split} \eeno
so that  we get \beq \label{eq:v3-cF2}
\big\|E_\ve(F_2^3)\big\|_{H_t} \leq
C\ve^{\be-2\ga}\Psi_1(t)\theta(t). \eeq

$\bullet$ \underline{Estimate of $E_\ve(F_3^3)$}

It follows by a similar derivation of \eqref{eq:vh-cF3} that for
$\ga\leq \al,$
\begin{align}
\|E_\ve(F_3^3)\|_{H_t}\le & C\ve^{1-\ga}\|[(1-G(\ve^\be a))\ve\pa_3
q]_\Phi\|_{L^1_t(B^{-1+\ga,\f12-\ga})}\\
 \le &
C\ve^{\al-\ga}\ve^{1-\al}\|q\|_{Y_t}\le
C\ve^{\al-\ga}\theta(t)\Psi(t).\label{eq:v3-cF3}
\end{align}

Since $\ga<\f{\be-\al}2,$ we have $\be-2\ga>\al-\ga,$ by summing up
(\ref{eq:v-chom}) and (\ref{eq:v3-cF1})--(\ref{eq:v3-cF3}), we
arrive at \beq \label{eq:v3-class} \|v^3\|_{H_t}\le
C\bigl(\|v_0\|_{X_2}+\max\bigl(\ve^{1-\al-2\ga},\ve^{\al-\ga}\bigr)\theta(t)\Psi(t)\bigr).
\eeq

Proposition \ref{prop:tht} follows by combining (\ref{eq:vh-class})
with (\ref{eq:v3-class}).\ef

\setcounter{equation}{0}

\section{Regularizing effect of the analyticity}\label{sect8}

The goal of this  section is to present the proof of Proposition
\ref{prop:Psi}. Here we need to use the regularizing effect of the
heat semigroup. As a convention throughout this section, we always
assume that there holds (\ref{assum-a}). \vspace{0.1cm}

\no{\bf Step 1.} Estimate of the density

In view of \eqref{p.1}, we get, by applying \eqref{o.9} and
(\ref{o.19}-\ref{o.20}), that \beno
\begin{split}
\Psi_1(t)\leq
&\bigl\|e^{\delta|D|}a_0\bigr\|_{B^{1,\f12}}+\bigl\|e^{\delta|D|}a_0\bigr\|_{B^{1+\ga,\f12-\ga}}
+\bigl\|e^{\delta|D|}a_0\bigr\|_{B^{1-\ga,\f12+\ga}}+\e^{3\al+3\ga}\bigl\|e^{\delta|D|}a_0\bigr\|_{B^{\ga,\f32-\ga}}\\
&+C\Bigl(\f{1}\lam\Psi_1(t)+\ve^{1-\al}\bigl(\|v_\Phi\|_{L^1_t(B^{2-\ga,\f12+\ga})}+
\|v_\Phi\|_{L^1_t(B^{2,\f12})}+\|v_\Phi\|_{L^1_t(B^{2+\ga,\f12-\ga})}\bigr)\|a_\Phi\|_{\wt{L}^\infty_t(B^{1,\f12})}\\
&+\ve^{1+2\al+3\ga}\bigl(\|v^{\rm
h}_\Phi\|_{L^1_t(B^{1,\f32})}\|a_\Phi\|_{\wt{L}^\infty_t(B^{1+\ga,\f12-\ga})}
+\|v_\Phi\|_{L^1_t(B^{1+\ga,\f32-\ga})}\|a_\Phi\|_{\wt{L}^\infty_t(B^{1,\f12})}\bigr)\Bigr).
\end{split}
\eeno However it is easy to observe from Lemma \ref{lem3.1} that
\beno \bigl\|e^{\delta|D|}a_0\bigr\|_{B^{1,\f12}}\lesssim
\bigl\|e^{\delta|D|}a_0\bigr\|_{B^{1+\ga,\f12-\ga}}+\bigl\|e^{\delta|D|}a_0\bigr\|_{B^{1-\ga,\f12+\ga}},
\eeno and it follows from \eqref{l.21} and \eqref{l.22} that
$$\longformule{ \ve^{1+2\al+2\ga}\bigl(\|v^{\rm
h}_\Phi\|_{L^1_t(B^{1,\f32})} +\|v^{\rm
h}_\Phi\|_{L^1_t(B^{1+\ga,\f32-\ga})}\bigr)}{{}\lesssim
\ve^{2\al+2\ga}\Bigl(\|v^{\rm
h}_\Phi\|_{L^1_t(B^{2,\f12})}+\ve^2\|v^{\rm
h}_\Phi\|_{L^1_t(B^{0,\f52})}
+\|v_\Phi\|_{L^1_t(B^{2+\ga,\f12-\ga})}+\ve^2\|v_\Phi^{\rm
h}\|_{L^1_t(B^{\ga,\f52-\ga})}\Bigr). }$$ Therefore since $0<\ga\leq
\f{1-3\al} 3$, we  obtain
\begin{align}\label{eq:Psi-1}
\Psi_1(t)\le C\|a_0\|_{X_1}+C\Big(\f1
\lambda+\ve^\ga\Psi_3(t)\Big)\Psi_1(t).
\end{align}

\no{\bf Step 2.} Estimate of $\Psi_2(t)$

In the remaining of this section,  we denote \beno
\|f\|_{K_t}\eqdefa
\|f_\Phi\|_{\widetilde{L}^\infty_t(B^{\ga,\f12-\ga})}+\|f_\Phi\|_{\widetilde{L}^\infty_t(B^{-\ga,\f12+\ga})}.
\eeno Then it follows from Lemma \ref{lem:parabolic-home} that
\begin{align}\label{eq:vhom-a}
\|e^{t\Delta_\ve}v_0\|_{K_t}\le C\|v_0\|_{X_3}.
\end{align}

\no{\bf Step 2.1}\ The estimate of the horizontal velocity.

In order to estimate $\|v^{\rm h}\|_{K_t},$ we still need to deal
with the source term in \eqref{eq:velocity}.

$\bullet$ \underline{Estimate of $E_\ve(F_1^{\rm h})$}

In view of  \eqref{eq:velocity}, by using Bony's decomposition
\eqref{pd} in the horizontal variable for $v^3v^{\rm h},$ we write
$F_1^{\rm h}$ as \beno F_1^{\rm h}=-\ve^{1-\al}\na_{\rm
h}\cdot(v^{\rm h}\otimes v^{\rm h})-\ve^{1-\al}\pa_3\cR^{\rm v}(v^3,
v^{\rm h})-\ve^{1-\al}\pa_3T^{\rm v}({v^3},v^{\rm h})\eqdefa
F_{11}^{\rm h}+F_{12}^{\rm h}+F_{13}^{\rm h}. \eeno Applying Lemma
\ref{lem:parabolic-inhome} and the law of product of Corollary
\ref{col3.1} yields
\begin{align*}
\|E_\ve(F_{11}^{\rm h})\|_{K_t}&\lesssim \ve^{1-\al}
\big(\|[v^{\rm h}\na_hv^{\rm h}]_\Phi\|_{L^1_t(B^{\ga,\f12-\ga})}+\|[v^{\rm h}\na_hv^{\rm h}]_\Phi\|_{L^1_t(B^{-\ga,\f12+\ga})}\big)\\
&\lesssim \ve^{1-\al}\Bigl(\|v^{\rm
h}_\Phi\|_{L^1_t(B^{2,\f12})}\big(\|v^{\rm
h}_\Phi\|_{\widetilde{L}^\infty_t(B^{\ga,\f12-\ga})}
+\|v^{\rm h}_\Phi\|_{\widetilde{L}^\infty_t(B^{-\ga,\f12+\ga})}\big)\\
&\qquad\qquad\qquad\qquad\qquad\qquad+\|v^{\rm
h}_\Phi\|_{L^1_t(B^{2-\ga,\f12+\ga})}\|v^{\rm
h}_\Phi\|_{\widetilde{L}^\infty_t(B^{0,\f12})}\Bigr).
\end{align*}
Note that for $\bar{\varphi}$ in $C_c^\infty(\R^+\setminus \{0\})$
with $\bar{\varphi}$ equals $1$ on the support of $\varphi$ in
\eqref{1.0}, let $\wt{\varphi}(\xi_3)\eqdefa
\frac{\bar{\varphi}(|\xi_3|)}{i\xi_3},$ we may write \beno
\D_\ell^{\rm v}v^3=2^{-\ell}\wt{\varphi}(2^{-\ell}|D_3|)\D_\ell^{\rm
v}\p_3v^3,\eeno and due to $\p_3v^3=-\dive_{\rm h}v^{\rm h},$ we
have \beno \cR^{\rm v}(v^3,v^{\rm h})=-\sum_{\ell\in
\Z}2^{-\ell}\wt{\varphi}(2^{-\ell}|D_3|)\D_\ell^{\rm v}\dive_{\rm
h}v^{\rm h}S_{\ell+2}^{\rm v}v^{\rm h},\eeno from which,
 by
using Bony's decomposition in the horizontal variables for $\cR^{\rm
v}(v^3, v^{\rm h}),$ one may deduce, by a similar derivation of
Lemma \ref{lem3.2}, that   $E_\ve(F_{12}^{\rm h})$ shares the same
estimate as $E_\ve(F_{11}^{\rm h})$.

Whereas it follows form  Remark \ref{rem:product} that  \beno
\|\Delta_k^{\rm h}\Delta_\ell^{\rm v}[F_{13}^{\rm
h}]_\Phi(t)\|_{L^2}\lesssim
\bigl(d_{k}(t)d_\ell+d_{k,\ell}\bigr)2^{-k\s}2^{-\ell
s}\|v^3_\Phi(t)\|_{B^{1,\f12}}\|v^{\rm
h}_\Phi\|_{\widetilde{L}^\infty_t({B}^{\s,s})} \eeno for any $\s\in
]-1,1], s\in \R$,  from which, and  Lemma \ref{lem:analytic}, we
infer
\begin{align*}
&\|E_\ve(F_{13}^{\rm h})\|_{K_t}\le \f C \lambda\big(\|v^{\rm
h}_\Phi\|_{\widetilde{L}^\infty_t(B^{\ga,\f12-\ga})} +\|v^{\rm
h}_\Phi\|_{\widetilde{L}^\infty_t(B^{-\ga,\f12+\ga})}\big).
\end{align*}
Hence we obtain \ben\label{eq:vh-F1a} \|E_\ve(F_{1}^{\rm
h})\|_{K_t}\le C\Bigl(\f1
\lambda+\ve^{1-3\al-2\ga}\Psi_3(t)\Bigr)\Psi_2(t). \een

$\bullet$ \underline{Estimate of $E_\ve(F_2^{\rm h})$}

Again due to Lemma \ref{lem:parabolic-inhome}, one has
\begin{align*}
\|E_\ve(F_{2}^{\rm h})\|_{K_t}&\lesssim \|[G(\ve^\be a)\Delta_\ve
v^{\rm h}]_\Phi\|_{L^1_t(B^{\ga,\f12-\ga})} +\|[G(\ve^\be
a)\Delta_\ve v^{\rm h}]_\Phi\|_{L^1_t(B^{-\ga,\f12+\ga})},
\end{align*} which together with
Corollary \ref{col3.1} and Lemma \ref{lem:composition} ensures that
$$\longformule{
\|E_\ve(F_{2}^{\rm h})\|_{K_t}
 \lesssim
\ve^\be\|a_\Phi\|_{\widetilde{L}^\infty_t(B^{1,\f12})}\big(\|v^{\rm
h}_\Phi\|_{L^1_t(B^{2+\ga,\f12-\ga})} +\ve^2\|v^{\rm
h}_\Phi\|_{{L}^1_t(B^{\ga,\f52-\ga})}+\|v^{\rm
h}_\Phi\|_{L^1_t(B^{2-\ga,\f12+\ga})}}{{}+ \ve^2\|v^{\rm
h}_\Phi\|_{{L}^1_t(B^{-\ga,\f52+\ga})}\big)+\ve^{\be}\|a_\Phi\|_{\widetilde{L}^\infty_t(B^{1-\ga,\f12+\ga})}\big(\|v^{\rm
h}_\Phi\|_{{L}^1_t(B^{2,\f12})}+\ve^2\|v^{\rm
h}_\Phi\|_{{L}^1_t(B^{0,\f52})}\big).} $$ Whenever $\ve$ is so small
that $\ve^\be K\leq \epsilon$ for $\epsilon$ determined by
\eqref{k.50}. This gives rise to \ben\label{eq:vh-F2a}
\|E_\ve(F_{2}^{\rm h})\|_{K_t}\le
C\ve^{\be-2\al-2\ga}\Psi_1(t)\Psi_3(t). \een

$\bullet$ \underline{Estimate of $E_\ve(F_3^{\rm h})$}

In view of \eqref{p.3}, we get, by a similar proof of \eqref{l.8},
that \beno
\begin{split}
\|\D_k^{\rm h}\D_\ell^{\rm v}\na_{\rm
h}[q_{31}]_\Phi(t)\|_{L^2}\lesssim &\ve^{1-\al}2^\ell
\bigl\|\D_k^{\rm h}\D_\ell^{\rm v}[T^{\rm v}(v^3,v^{\rm
h})]_\Phi(t)\bigr\|_{L^2}\\
\lesssim &
d_{k\ell}2^\ell2^{-k\ga}2^{-\ell\bigl(\f12-\ga\bigr)}\ve^{1-\al}\|v^3_\Phi(t)\|_{B^{1,\f12}}\|v^{\rm
h}_\Phi\|_{\wt{L}^\infty_t(B^{\ga,\f12-\ga})},
\end{split}
\eeno and \beno \|\D_k^{\rm h}\D_\ell^{\rm v}\na_{\rm
h}[q_{31}]_\Phi(t)\|_{L^2}\lesssim d_{k\ell}2^\ell
2^{k\ga}2^{-\ell\bigl(\f12+\ga\bigr)}\ve^{1-\al}\|v^3_\Phi(t)\|_{B^{1,\f12}}\|v^{\rm
h}_\Phi\|_{\wt{L}^\infty_t(B^{-\ga,\f12+\ga})}, \eeno so that
applying Lemma \ref{lem:analytic} yields \beq\label{p.5}
\|E_\ve(\na_{\rm h}q_{31})\|_{K_t}\leq \f{C}\lam \Psi_2(t). \eeq

Similarly according to \eqref{p.4}, one gets, by using a similar
derivation of \eqref{l.10}, that \beno
\begin{split}
\|\D_k^{\rm h}\D_\ell^{\rm v}\na_{\rm
h}[q_{41}]_\Phi(t)\|_{L^2}\lesssim &\ve^{1-\al}2^{-k}2^\ell
\bigl\|\D_k^{\rm h}\D_\ell^{\rm v}[T^{\rm v}(v^3,\dive_{\rm h}v^{\rm
h})]_\Phi(t)\bigr\|_{L^2}\\
\lesssim & d_{k\ell}2^\ell
2^{-k\ga}2^{-\ell\bigl(\f12-\ga\bigr)}\ve^{1-\al}\|v^3_\Phi(t)\|_{B^{1,\f12}}\|v^{\rm
h}_\Phi\|_{\wt{L}^\infty_t(B^{\ga,\f12-\ga})},
\end{split}
\eeno and \beno \begin{split} \|\D_k^{\rm h}\D_\ell^{\rm v}\na_{\rm
h}[q_{41}]_\Phi(t)\|_{L^2}\lesssim
&\ve^{1-\al-2\ga}2^{-k(1-2\ga)}2^{\ell(1-2\ga)} \bigl\|\D_k^{\rm
h}\D_\ell^{\rm v}[T^{\rm v}(v^3,\dive_{\rm h}v^{\rm
h})]_\Phi(t)\bigr\|_{L^2}\\
\lesssim & d_{k\ell}2^\ell
2^{k\ga}2^{-\ell\bigl(\f12+\ga\bigr)}\ve^{1-\al-2\ga}\|v^3_\Phi(t)\|_{B^{1,\f12}}\|v^{\rm
h}_\Phi\|_{\wt{L}^\infty_t(B^{\ga,\f12-\ga})}, \end{split} \eeno so
that applying Lemma \ref{lem:analytic}  and using $1-\al\ge 3\ga,$
we get \beq\label{p.6} \|E_\ve(\na_{\rm h}q_{41})\|_{K_t}\leq
\f{C}\lam \Psi_2(t). \eeq

Let us examine $q_{53}.$ In order to do it, by using Bony's
decomposition \eqref{pd} for $\D_{\rm h}v^3G(\ve^\be a)$ in the
vertical variable, we write \beno
q_{53}=(-\Delta_\ve)^{-1}\pa_3T^{\rm v}({\Delta_{\rm
h}v^3},G(\ve^\be a))+(-\Delta_\ve)^{-1}\pa_3\cR^{\rm v}({\Delta_{\rm
h}v^3},G(\ve^\be a)). \eeno Note that Remark \ref{rem:product} and
Lemma \ref{lem:composition} ensures  \beno \|\Delta_k^{\rm
h}\Delta_\ell^{\rm v}[T^{\rm v}({\Delta_{\rm h}v^3},G(\ve^\be
a))]_\Phi(t)\|_{L^2} \lesssim
\ve^\be(d_{k}(t)d_\ell+d_{k\ell})2^{k(1-\ga)}2^{-\ell\bigl(\f12-\ga\bigr)}\|v^3_\Phi(t)\|_{B^{1+\ga,\f12-\ga}}\|a_\Phi\|_{\widetilde{L}^\infty_t({B}^{1,\f12})},
\eeno from which and a similar derivation of \eqref{p.5} and
\eqref{p.6}, we infer
\begin{align*}
\|E_\ve(\na_{\rm h}(-\Delta_\ve)^{-1}\pa_3T^{\rm v}({\Delta_{\rm
h}v^3},G(\ve^\be a))\|_{K_t}\le \f {C\ve^{\be-\ga}}
\lambda\Psi_1(t).
\end{align*}
Whereas by using Bony's decomposition \eqref{pd} for $\cR^{\rm
v}({\Delta_{\rm h}v^3},G(\ve^\be a))$ for the horizontal variables
and using $\dive v=0,$ one has \beno \bigl\|\p_3[\cR^{\rm
v}({\Delta_{\rm h}v^3},G(\ve^\be
a))]_\Phi\bigr\|_{L^1_t(B^{-1+\ga,\f12-\ga})}\lesssim
\ve^{\be}\|a_\Phi\|_{\widetilde{L}^\infty_t({B}^{1,\f12})}\|v^{\rm
h}_\Phi\|_{L^1_t(B^{2+\ga,\f12-\ga})}. \eeno Then applying
 Lemma \ref{lem:parabolic-inhome} gives
\begin{align*}
\|E_\ve(\na_{\rm h}(-\Delta_\ve)^{-1}\p_3\cR^{\rm v}({\Delta_{\rm
h}v^3},G(\ve^\be a))\|_{K_t}\lesssim &
\ve^{-2\ga}\bigl\|\p_3[\cR^{\rm v}({\Delta_{\rm h}v^3},G(\ve^\be
a))]_\Phi\bigr\|_{L^1_t(B^{-1+\ga,\f12-\ga})}\\
\lesssim &
\ve^{\be-2\ga}\|a_\Phi\|_{\widetilde{L}^\infty_t({B}^{1,\f12})}\|v^{\rm
h}_\Phi\|_{L^1_t(B^{2+\ga,\f12-\ga})}.
\end{align*}
Hence, thanks to Remark \ref{rem:pressure},
 for
$\ga\le \min\bigl(\f {1-3\al} 4, \f {\be-2\al} 2\bigr)$ and under
the assumption of \eqref{epsilon}, we deduce that
\begin{align}
\|E_\ve(F_3^{\rm h})\|_{K_t}\le C\Bigl(\f 1
\lambda+\max\Bigl(\ve^{\be-2\al-2\ga}, \ve^{1-3\al-4\ga},
K\ve^{\be-2\al-\ga}\Bigr) \Psi(t)\Bigr)\Psi(t).\label{eq:vh-F3a}
\end{align}

In view of \eqref{eq:velocity}, by summing up
(\ref{eq:vhom-a})--(\ref{eq:vh-F3a}), we arrive at
\ben\label{eq:Psi2-vh} \|v^{\rm h}\|_{K_t}\le
C\|v_0\|_{X_3}+C\biggl(\f 1
\lambda+\max\Bigl(\ve^{\be-2\al-2\ga},\ve^{1-3\al-4\ga},
K\ve^{\be-2\al-\ga}\Bigr)\Psi(t)\biggr)\Psi(t). \een

\no{\bf Step 2.2}\ The estimate of the vertical velocity.

Since $\ve$ satisfies $\ve^\be K\leq \epsilon,$ applying Lemma
\ref{lem:parabolic-inhome} gives \beno\begin{split}
\|E_\ve(F_{2}^3)\|_{K_t}\lesssim &
\|[F_2^3]_\Phi\|_{L^1_t(B^{\ga,\f12-\ga})}+\|[F_2^3]_\Phi\|_{L^1_t(B^{-\ga,\f12+\ga})}\\
\lesssim &
\ve^\be\Bigl(\|a_\Phi\|_{\wt{L}^\infty_t(B^{1,\f12})}\bigl(\|\D_\ve
v^3_\Phi\|_{L^1_t(B^{\ga,\f12-\ga})}+\|\D_\ve
v^3_\Phi\|_{L^1_t(B^{-\ga,\f12+\ga})}\bigr)\\
&\qquad\qquad\qquad\qquad\qquad\qquad+\|a_\Phi\|_{\wt{L}^\infty_t(B^{1-\ga,\f12+\ga})}\|\D_\ve
v^3_\Phi\|_{L^1_t(B^{0,\f12})}\Bigr),
\end{split}
\eeno which gives \beq \label{eq:v3-F2a}
\|E_\ve(F_{2}^3)\|_{K_t}\leq C\ve^{\be}\Psi_1(t)\Psi_3(t). \eeq

While again as $\ve^\be K\leq \epsilon,$ $2\al+2\ga<1,$ it follows
from Lemma \ref{lem:parabolic-inhome} and
 Proposition \ref{prop6.2} that
\begin{align}\label{eq:v3-F3a}
\|E_\ve(F_3^3)\|_{K_t}\le C\ve\|q\|_{Z_t}\le
C\ve^{1-2\al-\ga}\Psi(t)^2.
\end{align}
Finally note that $F_1^3$  \beno F_1^3=-\ve^{1-\al}(v^{\rm
h}\cdot\na_{\rm h}v^3)+\ve^{1-\al}(v^3\textrm{div}_{\rm h}v^{\rm
h}). \eeno Then we get, by using Lemma \ref{lem:parabolic-inhome}
and the law of product Corollary \ref{col3.1}, that
\begin{align*}
\ve^{1-\al}\|E_\ve(v^{\rm h}\cdot\na_{\rm h}v^3)\|_{K_t}&\lesssim
\ve^{1-\al}\Bigl(\|v^{\rm
h}_\Phi\|_{\widetilde{L}^\infty_t(B^{0,\f12})}
\big(\|v^3_\Phi\|_{{L}^1_t(B^{2+\ga,\f12-\ga})}
+\|v^3_\Phi\|_{{L}^1_t(B^{2-\ga,\f12+\ga})}\big)\\
&\qquad\qquad\quad+\bigl(\|v^{\rm
h}_\Phi\|_{\widetilde{L}^\infty_t(B^{\ga,\f12-\ga})}+\|v^{\rm
h}_\Phi\|_{\widetilde{L}^\infty_t(B^{-\ga,\f12+\ga})}\bigr)\|v^3_\Phi\|_{{L}^1_t(B^{2,\f12})}\Bigr),
\end{align*}
and
\begin{align*}
\ve^{1-\al}\|E_\ve(v^3\textrm{div}_{\rm h}v^{\rm
h})\|_{K_t}&\lesssim
\ve^{1-\al}\Bigl(\|v^3_\Phi\|_{\widetilde{L}^\infty_t(B^{0,\f12})}
\big(\|v^{\rm h}_\Phi\|_{{L}^1_t(B^{2+\ga,\f12-\ga})}
+\|v^{\rm h}_\Phi\|_{{L}^1_t(B^{2-\ga,\f12+\ga})}\big)\\
&\qquad\qquad\quad+\bigl(\|v^3_\Phi\|_{\widetilde{L}^\infty_t(B^{\ga,\f12-\ga})}+\|v^3_\Phi\|_{\widetilde{L}^\infty_t(B^{-\ga,\f12+\ga})}\bigr)\|v^{\rm
h}_\Phi\|_{{L}^1_t(B^{2,\f12})}\Bigr),
\end{align*}
which ensures \beno \|E_\ve(F_{1}^3)\|_{K_t}\le
C\ve^{1-3\al-2\ga}\Psi_2(t)\Psi_3(t), \eeno from which and
(\ref{eq:v3-F2a}),(\ref{eq:v3-F3a}), we achieve
\ben\label{eq:Psi2-v3} \|v^3\|_{K_t}\le
C\bigl(\|v_0\|_{X_3}+\max\bigl(\ve^{\be},\ve^{1-3\al-2\ga}\bigr)\Psi^2(t)\bigr).
\een

Therefore since Lemma \ref{lem3.1} implies \beno
\|f_\Phi\|_{\widetilde{L}^\infty_t(B^{0,\f12})}\le \|f\|_{K_t},
\eeno by combining (\ref{eq:Psi2-vh}) with (\ref{eq:Psi2-v3}), we
conclude that \beq \label{8.12} \Psi_2(t)\le
C\biggl(\|v_0\|_{X_3}+\f 1 \lambda
\Psi(t)+\max\Bigl(\ve^{\be-2\al-2\ga},\ve^{1-3\al-4\ga},K\ve^{\be-2\al-\ga}\Bigr)\Psi^2(t)\biggr).
\eeq

\no{\bf Step 3.} Estimate of $\Psi_3(t)$

Let
\begin{align*}
\|f\|_{L_t}\eqdefa&
\|f_\Phi\|_{{L}^1_t(B^{2+\ga,\f12-\ga})}+\|f_\Phi\|_{{L}^1_t(B^{2-\ga,\f12+\ga})}
+\ve^{2}\|f_\Phi\|_{{L}^1_t(B^{\ga,\f52-\ga})}+\ve^{2}\|f_\Phi\|_{{L}^1_t(B^{-\ga,\f52+\ga})}.
\end{align*}
Then we deduce from Lemma \ref{lem:parabolic-home} that
\begin{align}\label{eq:vhhom-c}
\|e^{t\Delta_\ve}v_0\|_{L_t}\le C\|v_0\|_{X_3}.
\end{align}
Whereas applying Lemma \ref{lem:parabolic-inhome} gives \beno
\begin{split}
\ve^{2\al+2\ga}\|E_\ve(\ve^{1-\al}\na_{\rm h}\cdot(v^{\rm h}\otimes
v^{\rm h}))\|_{L_t}\lesssim \ve^{1+\al+2\ga}\Bigl(\|[v^{\rm
h}\otimes& v^{\rm h}]_\Phi\|_{L^1_t(B^{1+\ga,\f12-\ga})}\\
&+\|[v^{\rm h}\times v^{\rm
h}]_\Phi\|_{L^1_t(B^{1-\ga,\f12+\ga})}\Bigr),
\end{split}
\eeno and \beno
\begin{split}
\ve^{2\al+2\ga}\|E_\ve(\ve^{1-\al}\p_3(v^{\rm h}
v^{3}))\|_{L_t}\lesssim & \ve^{\al+2\ga}\Bigl(\|[v^{\rm
h}v^3]_\Phi\|_{L^1_t(B^{1+\ga,\f12-\ga})}
+\|[v^{\rm h}v^3]_\Phi\|_{L^1_t(B^{1-\ga,\f12+\ga})}\Bigr)\\
&+\ve^{1+\al+2\ga}\Bigl(\|[v^{\rm
h}v^3]_\Phi\|_{L^1_t(B^{\ga,\f32-\ga})}+\|[v^{\rm
h}v^3]_\Phi\|_{L^1_t(B^{-\ga,\f32+\ga})}\Bigr),
\end{split}
\eeno so that we get, by applying the law of product of Corollary
\ref{col3.1}, that \beno
\begin{split}
\ve^{2\al+2\ga}\|E_\ve(F_1^{\rm h})\|_{L_t}
 \lesssim & \ve^{1+\al+2\ga}\biggl(\bigl(\|v^{\rm
h}_\Phi\|_{L^1_t(B^{2+\ga,\f12-\ga})}
+\|v^{\rm h}_\Phi\|_{L^1_t(B^{2-\ga,\f12+\ga})}\bigr)\|v^{\rm h}_\Phi\|_{\widetilde{L}^\infty_t(B^{0,\f12})}\\
&\qquad\qquad\
+\|v^3_\Phi\|_{\widetilde{L}^2_t(B^{1,\f12})}\bigl(\|v^{\rm
h}_\Phi\|_{\widetilde{L}^2_t(B^{\ga,\f32-\ga})}+\|v^{\rm
h}_\Phi\|_{\widetilde{L}^2_t(B^{-\ga,\f32+\ga})}\bigr)\\
& \qquad\qquad\ +\|v^{\rm
h}_\Phi\|_{\widetilde{L}^2_t(B^{1,\f12})}\bigl(\|v^3_\Phi\|_{\widetilde{L}^2_t(B^{\ga,\f32-\ga})}
+\|v^3_\Phi\|_{\widetilde{L}^2_t(B^{-\ga,\f32+\ga})}\bigr)\biggr)\\
 &
+\ve^{\al+2\ga}\biggl(\|v^3_\Phi\|_{\widetilde{L}^2_t(B^{1,\f12})}\bigl(\|v^{\rm
h}_\Phi\|_{\widetilde{L}^2_t(B^{1+\ga,\f12-\ga})}+\|v^{\rm
h}_\Phi\|_{\widetilde{L}^2_t(B^{1-\ga,\f12+\ga})}\bigr)\\
&\qquad\qquad\ +\|v^{\rm
h}_\Phi\|_{\widetilde{L}^2_t(B^{1,\f12})}\bigl(\|v^3_\Phi\|_{\widetilde{L}^2_t(B^{1+\ga,\f12-\ga})}
+ \|v^3_\Phi\|_{\widetilde{L}^2_t(B^{1-\ga,\f12+\ga})}\bigr)\biggr).
\end{split} \eeno
Due to \eqref{p.1}, we arrive at \beq \label{eq:vh-cF1-3}
\ve^{2\al+2\ga}\|E_\ve(F_1^{\rm h})\|_{L_t}\leq
C\bigl(\ve^{1-\al}\Psi_2(t)\Psi_3(t)+\ve^\ga\Psi_4^2(t)\bigr). \eeq

By the same manner, we have
\begin{align*}
\|E_\ve(F_1^3)\|_{L_t}
\lesssim & \ve^{1-\al}\Bigl(\|[v^hv^3]_\Phi\|_{L^1_t(B^{1+\ga,\f12-\ga})}+\|[v^hv^3]_\Phi\|_{L^1_t(B^{1-\ga,\f12+\ga})}\\
&\qquad\qquad+\|[v^3\textrm{div}_{\rm h}v^{\rm
h}]_\Phi\|_{L^1_t(B^{\ga,\f12-\ga})} +\|[v^{3}\textrm{div}_{\rm
h}v^{\rm h}]_\Phi\|_{L^1_t(B^{-\ga,\f12+\ga})}\Bigr).
\end{align*}
Then applying the law of product of Corollary \ref{col3.1} yields
\beno
\begin{split}
\|E_\ve(F_1^3)\|_{L_t} \lesssim \ve^{1-\al}\Bigl(&\|v^{\rm
h}_\Phi\|_{\widetilde{L}^2_t(B^{1,\f12})}\bigl(\|v^3_\Phi\|_{\widetilde{L}^2_t(B^{1+\ga,\f12-\ga})}
+\|v^3_\Phi\|_{\widetilde{L}^2_t(B^{1-\ga,\f12+\ga})}\bigr)\\
&+\|v^3_\Phi\|_{\widetilde{L}^2_t(B^{1,\f12})}\bigl(\|v^{\rm
h}_\Phi\|_{\widetilde{L}^2_t(B^{1+\ga,\f12-\ga})}+\|v^{\rm
h}_\Phi\|_{\widetilde{L}^2_t(B^{1-\ga,\f12+\ga})}\bigr)\Bigr),
\end{split}
\eeno from which, we deduce that \beq \label{eq:v3-cF1-3}
\|E_\ve(F_1^3)\|_{L_t}\leq C\ve^{1-2\al-\ga}\Psi_4^2(t). \eeq

Similarly due to $\ve^\be K\leq\epsilon,$ it follows from  Lemma
\ref{lem:parabolic-inhome}, Lemma \ref{lem:composition} and
Corollary \ref{col3.1} that
\begin{align*}
\ve^{2\al+2\ga}\|E_\ve(F_2^{\rm h})\|_{L_t} \lesssim
&\ve^{2\al+2\ga}\Bigl(\|[G(\ve^\be a)\Delta_\ve v^{\rm
h}]_\Phi\|_{L^1_t(B^{\ga,\f12-\ga})}
+\|[G(\ve^\be a)\Delta_\ve v^{\rm h}]_\Phi\|_{L^1_t(B^{-\ga,\f12+\ga})}\Bigr)\\
\lesssim &
\ve^{\be+2\al+2\ga}\Bigl(\|a_\Phi\|_{\widetilde{L}^\infty_t(B^{1,\f12})}
\big(\|\Delta_\ve v^{\rm h}_\Phi\|_{L^1_t(B^{\ga,\f12-\ga})}+\|\Delta_\ve v^{\rm h}_\Phi\|_{L^1_t(B^{-\ga,\f12+\ga})}\big)\\
&\qquad\qquad\qquad\qquad\qquad\qquad\qquad\quad+\|a_\Phi\|_{\widetilde{L}^\infty_t(B^{1-\ga,\f12+\ga})}\|\Delta_\ve
v^{\rm h}_\Phi\|_{L^1_t(B^{0,\f12})}\Bigr),
\end{align*}
which gives
 \beq \label{eq:vh-cF2-3} \ve^{2\al+2\ga}\|E_\ve(F_2^{\rm
h})\|_{L_t}\leq C\ve^\be\Psi_1(t)\Psi_3(t). \eeq Along the same
line, we have \beno
\begin{split}
 \|E_\ve(F_2^3)\|_{L_t}\lesssim
& \ve^{\be}\Bigl(\|a_\Phi\|_{\widetilde{L}^\infty_t(B^{1,\f12})}
\big(\|\Delta_\ve v^{3}_\Phi\|_{L^1_t(B^{\ga,\f12-\ga})}+\|\Delta_\ve v^{3}_\Phi\|_{L^1_t(B^{-\ga,\f12+\ga})}\big)\\
&\qquad\qquad\qquad\qquad\qquad\qquad\qquad\quad+\|a_\Phi\|_{\widetilde{L}^\infty_t(B^{1-\ga,\f12+\ga})}\|\Delta_\ve
v^{3}_\Phi\|_{L^1_t(B^{0,\f12})}\Bigr),
\end{split}
\eeno which implies
\begin{align}\label{eq:v3-cF2-3}
\|E_\ve(F_2^3)\|_{L_t}\le C\ve^\be\Psi_1(t)\Psi_3(t).
\end{align}

Finally since $2\al+2\be\leq 1,$  by applying Lemma
\ref{lem:parabolic-inhome} and Proposition \ref{prop6.2}, one has
\begin{align}\label{eq:v-cF3-3}
\ve^{2\al+2\ga}\|E_\ve(F_3^{\rm
h})\|_{L_t}+\|E_\ve(F_3^3)\|_{L_t}\le C\ve^{2\al+2\ga}\|
q\|_{Z_t}\le C\ve^{\ga}\Psi(t)^2.
\end{align}

Summing up (\ref{eq:vhhom-c})--(\ref{eq:v-cF3-3}), we conclude that
\ben\label{eq:Psi-3} \Psi_3(t)\le C\big(\ve^{2\al+2\ga}\|v^{\rm
h}\|_{L_t}+\|v^3\|_{L_t}\big)\le
C\bigl(\|v_0\|_{X_3}+\max\bigl(\ve^\ga,\ve^{1-2\al-\ga}\bigr)\Psi^2(t)\bigr).
\een Here we used Lemma \ref{lem3.1} so that \beno
\|f_\Phi\|_{{L}^1_t(B^{2,\f12})}+\ve^2\|f_\Phi\|_{{L}^1_t(B^{0,\f52})}\le
C\|f\|_{L_t}. \eeno

\no{\bf Step 4.} Estimate of $\Psi_4(t)$

Finally it is easy to observe from \eqref{cheminl} and \eqref{p.1}
that \beno \Psi_4(t)\leq \Psi_2^{\f12}(t)\Psi_3^{\f12}(t)\leq
\f12\bigl(\Psi_2(t)+\Psi_3(t)\bigr), \eeno which together with
\eqref{eq:Psi-1}, \eqref{8.12} and \eqref{eq:Psi-3}  leads to
 Proposition
\ref{prop:Psi}.\ef

\medskip

\noindent {\bf Acknowledgments.} Ping Zhang would like to thank
Professor Jean-Yves Chemin for profitable discussion on this topic.
Part of this work was done when we were visiting Morningside Center
of the Academy of Mathematics and Systems Sciences, CAS. We
appreciate the hospitality and the financial support from MCM. P.
Zhang is partially supported by NSF of China under Grant   11371347,
the fellowship from Chinese Academy of Sciences and innovation grant
from National Center for Mathematics and Interdisciplinary Sciences.
Z. Zhang is partially supported by NSF of China under Grant
11371037, Program for New Century Excellent Talents in University
and Fok Ying Tung Education Foundation.
\medskip


\begin{thebibliography}{50}

\bibitem{abidi} H.  Abidi,  \'Equation de Navier-Stokes avec densit\'e et
viscosit\'e variables dans l'espace critique, {\it Rev. Mat.
Iberoam.}, {\bf 23}(2007),  537--586.

\bibitem{AP}  H. Abidi and M. Paicu,  Existence globale pour un fluide
inhomog\'ene, {\it Ann. Inst. Fourier (Grenoble)}, {\bf 57} (2007),
 883--917.


\bibitem{AGZ2} H. Abidi, G. Gui and P. Zhang:
 On the wellposedness of $3-$D inhomogeneous Navier-Stokes equations
in the critical spaces,  {\it   Arch. Ration. Mech. Anal.},  {\bf
204}   (2012), 189--230.

\bibitem{AGZ3} H. Abidi, G. Gui and P. Zhang: Wellposedness of $3-$D
inhomogeneous Navier-Stokes equations with highly oscillating
initial velocity field, {\it J. Math. Pures Appl.},  {\bf 100}
(2013), 166-203.

\bibitem{BCD} H. Bahouri, J.~Y. Chemin and R. Danchin, {\it Fourier analysis and
nonlinear partial differential equations}, Grundlehren der
mathematischen Wissenschaften 343, Springer-Verlag Berlin
Heidelberg, 2011.

\bibitem{bg}   H. Bahouri and I. Gallagher, On the stability in   weak topology  of the set of global solutions to the Navier-Stokes equations,
{\it Arch. Ration. Mech. Anal.}, {\bf 209}(2013), 569-629.


\bibitem{Bo} J.~M. Bony, Calcul symbolique et propagation des singularit\'es pour
les \'equations aux d\'eriv\'ees partielles non lin\'eaires, {\it
Ann. Sci. \'Ecole Norm. Sup.}, {\bf 14}(1981), 209--246.

\bibitem{cannonemeyerplanchon}
M. Cannone, Y. Meyer and F. Planchon, Solutions autosimilaires des
\'equations de Navier-Stokes, {\em S\'eminaire "\'Equations aux
D\'eriv\'ees Partielles de l'\'Ecole Polytechnique}, 1993--1994.

\bibitem{Ch04} J.~Y. Chemin, Le syst\'eme de Navier-Stokes
incompressible soixante dix ans apr\'e Jean Leray, {\it S\'emin.
Congr.}, {\bf 9} (2004), 99-123.

\bibitem{CDGG} J.~Y. Chemin, B. Desjardins, I. Gallagher and
 E. Grenier, Fluids with anisotropic viscosity, {\it Mod\'elisation
 Math\'ematique et Analyse Num\'erique}, {\bf 34} (2000), 315-335.

\bibitem{c-g} J.~Y. Chemin and I. Gallagher,
Large, global solutions to the Navier-Stokes equations slowly
varying in one direction, {\it Trans. Amer. Math. Soc.}, {\bf 362}
(2010), 2859-2873.

\bibitem{cgp} J.~Y. Chemin, I. Gallagher, and M. Paicu,  Global regularity for some classes of large
  solutions to the Navier-Stokes equations, {\it Ann. of Math.}, {\bf 173} (2011),
  983-1012.

\bibitem{c-p-z} J.~Y. Chemin, M. Paicu, and P. Zhang, Global large solutions  to 3-D inhomogeneous
 Navier-Stokes system with one slow variable, {\it J. Differential Equations}, {\bf 256} (2014),  223-252.

\bibitem{CZ1}J.~Y. Chemin and P.  Zhang, On the global wellposedness  to the 3-D incompressible anisotropic
 Navier-Stokes equations, {\it Comm. Math. Phys.}, {\bf 272} (2007),
 529--566.

\bibitem{CZ5}
J.-Y. Chemin and P.  Zhang, On the critical one component regularity
for 3-D
 Navier-Stokes system,  arXiv:1310.6442[math.AP].

\bibitem{danchin} R. Danchin, Density-dependent incompressible viscous fluids in
critical spaces, {\it Proc. Roy. Soc. Edinburgh Sect. A}, {\bf 133}
(2003),  1311--1334.

\bibitem{danchin2}  R. Danchin, Local and global well-posedness results for flows of inhomogeneous
viscous fluids, {\it Adv. Differential Equations}, {\bf 9} (2004),
353--386.


\bibitem{DM} R. Danchin and P.~B. Mucha, A Lagrangian approach for
the incompressible Navier-Stokes equations with variable density,
{\it Comm. Pure Appl. Math.}, {\bf 65} (2012), 1458-1480.



\bibitem{HPZ3}  J. Huang, M. Paicu and P. Zhang, Global wellposedness to incompressible inhomogeneous
 fluid system with bounded density and  non-Lipschitz velocity, {\it Arch. Ration. Mech. Anal.}, {\bf  209}  (2013),   631-682.




\bibitem{LS} O.~A.~  Lady\v zenskaja and  V.~A.~ Solonnikov,  The unique solvability
of an initial-boundary value problem for viscous incompressible
inhomogeneous fluids. (Russian) Boundary value problems of
mathematical physics, and related questions of the theory of
functions, 8, {\it Zap. Nau\v cn. Sem. Leningrad. Otdel. Mat. Inst.
Steklov. (LOMI)}, {\bf 52} (1975), 52--109, 218--219.



\bibitem{Pa02} M. Paicu, \'Equation anisotrope
de Navier-Stokes dans des espaces  critiques,  {\it Rev. Mat.
Iberoamericana,} {\bf 21} (2005),   179--235.



\bibitem{PZ1} M. Paicu and P. Zhang, Global solutions to the 3-D incompressible
 anisotropic Navier-Stokes system  in the critical spaces, {\it Comm. Math. Phys.}, {\bf 307} (2011), 713-759.

\bibitem{PZ2} M. Paicu and P. Zhang, Global solutions to the 3-D incompressible inhomogeneous
 Navier-Stokes system, {\it J. Funct. Anal.},  {\bf 262} (2012), 3556-3584.

 \bibitem{PZZ3} M. Paicu and P. Zhang, On some  large global solutions  to 3-D density-dependent
 Navier-Stokes system with slow variable: well-prepared data, {\it Ann. I. H. Poincar\'e - AN}£¬
  (2014), http://dx.doi.org/10.1016/j.anihpc.2014.03.006.


\bibitem{mz1} M. Paicu and Z. Zhang, Global regularity for the Navier-Stokes equations with some classes of large initial data,
{\it  Anal. PDE}, {\bf 4}(2011), 95-113.

\bibitem{mz2} M. Paicu and Z. Zhang, Global well-posedness for the 3D Navier-Stokes equations with ill-prepared initial data,
{\it J. Inst. Math. Jussieu}, {\bf 13} (2014), 395-411.







 \end{thebibliography}
\end{document}